\newtheorem{remark}[theorem]{Remark}
\newtheorem{assumption}[theorem]{Assumption}
\newcommand{\R}{\mathbb{R}}
\newcommand{\Rn}{\mathbb{R}^{n}}  
\newcommand{\Rnp}{\mathbb{R}^{n\times p}} 
\newcommand{\Rnn}{\mathbb{R}^{n\times n}}
\newcommand{\Sn}{\mathbb{S}^{n}}
\newcommand{\Sp}{\mathbb{S}^{p}}
\newcommand{\cL}{\mathcal{L}}
\newcommand{\cLb}{\mathcal{L}_{\beta}}
\newcommand{\TX}{\mathcal{T}(X)}
\newcommand{\us}{\underline{\sigma}}
\newcommand{\cC}{\mathcal{C}}
\newcommand{\stiefel}{{\cal S}_{n,p}}
\newcommand{\tr}{\mathrm{tr}}
\newcommand{\Diag}{\mathrm{Diag}}
\newcommand{\zz }{^{\top}}
\newcommand{\half }{\frac{1}{2}}
\newcommand{\inv}{^{-1}}
\newcommand{\st}{\mathrm{s.\,t.}\,\,} 
\newcommand{\ff}{_{\mathrm{F}}}
\newcommand{\fs}{^2_{\mathrm{F}}}
\newcommand{\dkh}[1]{\left(#1\right)}
\newcommand{\norm}[1]{\left\|#1\right\|}
\newcommand{\rand}[2]{\mathbf{rand}(#1,#2)}
\newcommand{\abs}[1]{\left|#1\right|}
\newcommand{\jkh}[1]{\left\langle#1\right\rangle}
\newcommand{\hess}[1]{\nabla^2 f(#1)}
\newcommand{\comm}[1]{{\color{black}#1}}
\newcommand{\revise}[1]{{\color{black}#1}}
\definecolor{Gray}{rgb}{0.5,0.5,0.5}
\DeclareMathOperator*{\argmin}{arg\,min}
\begin{document}

\title{Parallelizable Algorithms for Optimization Problems with Orthogonality Constraints}

\author{Bin Gao\thanks{State Key Laboratory of Scientific and Engineering 
		Computing, Academy of Mathematics and Systems Science, Chinese  Academy of Sciences, and University of Chinese Academy of Sciences, China (gaobin@lsec.cc.ac.cn)}
		\and Xin Liu\thanks{State Key Laboratory of Scientific and Engineering 
		Computing, Academy of Mathematics and Systems Science, Chinese Academy of Sciences, 
		and University of Chinese Academy of Sciences, China (liuxin@lsec.cc.ac.cn). Research supported in part by NSFC grants 11622112, 11471325, 91530204 and 11688101, the National Center for Mathematics and Interdisciplinary Sciences, CAS, and Key Research Program of Frontier Sciences QYZDJ-SSW-SYS010, CAS.}
		 \and Ya-xiang Yuan\thanks{State Key Laboratory of Scientific and 
		Engineering Computing, Academy of Mathematics and Systems Science, Chinese Academy of Sciences, China (yyx@lsec.cc.ac.cn). Research supported in part by NSFC grant 11331012 and 11461161005.}
}

%\date{\today} 
\maketitle

\begin{abstract}
	To construct a parallel approach for solving optimization problems with orthogonality constraints is 
	usually regarded as an extremely difficult mission, due to the low scalability of the 
	orthonormalization procedure. 
	However, such demand is particularly huge in some application areas such as materials computation.
	In this paper, we propose a proximal linearized augmented Lagrangian algorithm (PLAM)
	for solving optimization problems with orthogonality constraints. 
	Unlike the classical augmented Lagrangian methods, 
	in our algorithm, the prime variables are updated by minimizing a proximal 
	linearized approximation of the augmented Lagrangian function, 
	meanwhile the 
	dual variables are updated by a closed-form expression which holds at any first-order 
	stationary point.
	The orthonormalization procedure is only invoked once 
	at the last step of the above mentioned algorithm if high-precision feasibility is needed.
	Consequently, the main parts of the proposed algorithm can be parallelized naturally. 
	We establish global subsequence convergence, worst-case complexity 
	and local convergence rate for PLAM under some mild assumptions. 
	To reduce the sensitivity of the penalty parameter, we put forward a modification
	of PLAM, which is called parallelizable column-wise block minimization of PLAM (PCAL).
	Numerical experiments in serial
	illustrate that 
	the novel updating rule for the Lagrangian multipliers significantly accelerates 
	the convergence of PLAM and makes it comparable with the existent feasible solvers for 
	optimization problems with orthogonality constraints, and the performance of PCAL
	does not highly rely on the choice of the penalty parameter.
	Numerical experiments under parallel environment demonstrate that
	PCAL attains good performance and high scalability in solving discretized Kohn-Sham total energy 
	minimization problems.
\end{abstract}

\textbf{Key words.} orthogonality constraint, Stiefel manifold, augmented Lagrangian method, 
parallel computing

\textbf{AMS subject classifications.} 15A18,  65F15,  65K05, 90C06

% -----------------------------------------------------
\section{Introduction}

In this paper, we consider the following matrix variable optimization problem
with orthogonality constraints.
\begin{eqnarray}\label{prob}
\left.
\begin{array}{rl}
\min\limits_{X\in\Rnp} &	f(X)\\
\st & X\zz X=I_p,
\end{array}
\right.
\end{eqnarray}
where $I_p$ is the $p$-by-$p$ identity matrix with $2p\leq n$, and $f : \Rnp \longmapsto \R $
is a continuously differentiable function. 
The feasible set of the orthogonality constraints is also known as Stiefel manifold,
\revise{$\stiefel=\{X\in \Rnp|~X\zz X = I_p\}$}. 

Throughout this paper, we assume
\begin{assumption}\label{a1}
	{\bf (Blanket Assumption)}
$f$ is continuously differentiable. 
%with $\rho$ to be the supremum of the $2$-norm of its Hessian over 
%$\tilde{\mathcal{S}}:=\{Y\mid  ||Y||\fs < p+1 \}$\footnote{In fact,
%			$\tilde{\mathcal{S}}$ can be defined as
%			any given bounded open set containing $\stiefel$.},
%		\begin{eqnarray*}
%			\rho := \sup\limits_{X\in\tilde{\mathcal{S}}}\norm{\hess{X}}_2.
%		\end{eqnarray*}
\end{assumption}

The twice differentiability of $f$ will be particularly mentioned once it is required in some 
theoretical analyses.

\subsection{Literature Survey}

Kohn-Sham density functional theory (KSDFT) is known to 
be an important topic in materials science \cite{kohn1965self}. The last step
of KSDFT is to minimize a discretized 
Kohn-Sham total energy function 
\begin{eqnarray}\label{eq:KS-energy} 
E(X):=\frac{1}{4}\tr(X\zz L X) + \half\tr(X\zz V_{ion}  X)
+ \frac{1}{4} \rho^\top
L^\dagger \rho + \half \rho\zz \epsilon_{xc}(\rho), 
\end{eqnarray}
subject to orthogonality constraints.
Here $\rho(X):=\diag(XX\zz)$ denotes the charge density, and $L\in\R^{n\times n}$ is a 
finite-dimensional representation of the Laplace operator in the planewave basis. 
The discretized local ionic potential can be represented by a diagonal matrix $V_{\mathrm{ion}}$. 
And the matrix $L^\dag$ which is the discrete form of the Hartree potential corresponds 
to the pseudo-inverse of $L$. The exchange correlation function $\epsilon_{\mathrm{xc}}$ 
is used to model the non-classical and quantum interaction between electrons. 
The discretized energy minimization is exactly a special case of \eqref{prob}.
The variable scale of such problems is often very large,
and hence the demand on efficient solvers for optimization with orthogonality constraints
is high.

In recent decades, 
lots of researchers 
have proposed quite a few 
efficient optimization approaches for %\remove{solving} 
discretized Kohn-Sham total energy minimization
 \cite{Yang06,Yang2007,Yang09,YangGao2009,WenUlbrich13,Wen_Zhou14,Ulbrich_Wen2015,Dai_Zhou17,LiWen17}. 
For the general purpose of solving optimization problems with orthogonality constrains, 
there are abundant algorithms. Retraction based approaches
\cite{Edelman98,Nishimori05,Abisil2008,WenYin2013,JiangDai15},
splitting algorithm \cite{LaiOsher14}, multipliers correction framework \cite{Gao2016},
just to mention a few. Interested readers are referred to the references 
in \cite{Gao2016}. There are a few successful solvers.
The most famous one is the toolbox for optimization 
on manifolds, which is called Manopt\footnote{Available from 
	\href{http://www.manopt.org}{http://www.manopt.org}}, in which 
lots of retraction based algorithms for Problem \eqref{prob}, such as MOptQR, a QR projection
algorithm, are included.
Another quasi-geodesic based approach called OptM\footnote{Available from \href{http://optman.blogs.rice.edu}{http://optman.blogs.rice.edu}} is widely used in the area of discretized Kohn-Sham energy minimization.

However, the lack of concurrency becomes a major 
bottleneck of solving optimization problems with orthogonality constraints, 
particularly, when the number of columns of the variable matrix is large.
%\comm{Unfortunately}, parallel computation is not popular in optimization area 
Unfortunately, parallel computation does not attract much attention from optimization area 
until very recently.
Refer to \cite{Hogwild11,Boyd11,PengYin13,Wright15,PengYin16}, 
there is \comm{an} urgent demand of parallelization in optimization area.
Although high scalability algorithms are desired by KSDFT area for decades, 
there is no successful attempt in this regard so far \cite{Dai_Zhou17}.

We find that parallelization is particular difficult for 
optimization problems with orthogonality constraints.
The main reason is that the scalability of  \comm{orthonormalization calculations} is low no matter which 
particular way how you do it.

\subsection{Contribution}

In this paper, we propose an infeasible algorithm for optimization problems with 
orthogonality constraints. It is based on \revise{the} augmented Lagrangian method but \revise{employing} totally different 
updating scheme for both prime and dual variables. 
The main motivation of the so-called proximal linearized augmented Lagrangian method
(PLAM)
is an observation that the dual variables enjoy a closed-form formula at each 
first-order stationary point. Therefore, we consider to use the symmetrization of this formula
as the updating rule for the dual variables, 
to take the place of dual ascent step in the classical augmented Lagrangian method.
For the prime variables, instead of solving the augmented Lagrangian subproblem to some preset precision,
we minimize a proximal linearized approximation
of the augmented Lagrangian function, which is equivalent to take one step gradient descent.

The orthonormalization procedures are waived in all iterations except the last
one to guarantee high-precision feasibility. The cost of waiving orthonormalization
is to do more BLAS3 calculations (matrix-matrix multiplication) which are known to have
high scalability. 

We show the global convergence, worst-case complexity
and local Q-linear convergence rate for PLAM under some mild assumptions.
The global convergence of PLAM requires sufficiently large penalty parameter
and correspondingly small stepsize. Numerical tests also verify the sensitivity 
of the penalty parameter. Consequently, we put forward a novel modification strategy
that is to add 
redundant unit norm constraints to the proximal linearized augmented Lagrangian 
subproblem for updating the prime variables. By using this strategy, we can 
restrict the iterates
in a compact set such that the penalty parameter is no longer required to be large.
On the other hand, such modification does not destroy the structure
that the subproblem has a closed-form solution which can be calculated in
parallel. We call the consequent algorithm PCAL, namely, parallelizable column-wise block minimization
for PLAM. The boundedness of PCAL iterates can be guaranteed automatically, and hence 
the penalty parameter is no longer required to be sufficiently large. 

The numerical experiments under serial computing
demonstrate the way how to choose default settings
for our algorithms, and show that the infeasible algorithms
are at least as efficient as the existent feasible algorithms
in solving a bunch of test problems. The numerical 
experiments under parallel computing illustrate the computational complexity
of PCAL and expose its high scalability.

\subsection{Organization and Notations}

The motivation of new approaches will be introduced in the next section.
In Section 3, we will present the algorithm frameworks.
We investigated the theoretical behaviors of the new proposed algorithms in Section 4.
Numerical experiments will be demonstrated in Section 5.
In the last section, we will draw a brief conclusion and discuss possible future works.

Notations.
$\Sp:=\{X\in\R^{p\times p}\mid X\zz = X\}$
refers to the $p$-by-$p$ real symmetric matrices set.
$\lambda_{\max}(A)$ and $\lambda_{\min}(A)$
stand for the largest and smallest eigenvalues of given symmetric real matrix $A$,
respectively. $\sigma_{\max}(A)$ and $\sigma_{\min}(A)$
denote the largest and smallest singular values of given real matrix $A$,
respectively. $X^\dagger :=(X\zz X)\inv X\zz$
refers to the pseudo inverse of $X$. 
$\Diag(v)\in\Sn$ denotes a diagonal matrix with all entries of $v\in\Rn$
in its diagonal, and $\diag(A)\in\Rn$ extracts the diagonal entries
of matrix $A\in\Rnn$. For convenience,
$\Phi(M):=\Diag(\diag(M))$ represents the diagonal matrix with the diagonal entries
of square matrix $M$ in its diagonal. $\varPsi(A):=\frac{1}{2}(A+A\zz)$ stands for the 
average of a square matrix and its transpose.

\section{Motivation}

As mentioned in the previous section, almost all 
the \revise{existing} practically useful methods 
require feasible iterates all the time.
To realize feasibility, 
either explicit or implicit orthonormalization requires to be 
invoked. Such kind of calculation lacks of scalability and hence becomes the bottleneck 
computation in the corresponding algorithms.
For example, we consider the discretized 
Kohn-Sham total energy minimization \eqref{eq:KS-energy}. 
In each iteration, the function value and first-order derivative evaluation cost
$O(n\log n+np)$ or $O(np)$ flops per iteration, depending on whether plane wave or finite difference,
respectively, is used in the discretization scheme.
For the main iteration of any algorithm for solving \eqref{eq:KS-energy} developed in recent decade,
the computational cost per iteration is $O(np^2)$ for BLAS3 calculation, plus
$O(p^3)$ for orthonormalization which can hardly be parallelized. 

To break through this bottleneck, we suggest to
use infeasible methods to take the place of feasible methods.

There is no existent infeasible approach for general purpose reported to be efficient
for optimization problems with orthogonality constraints.
\comm{Previous} infeasible approaches designed for \eqref{prob} 
either work specially for Rayleigh-Ritz trace minimization \cite{SLRP15,EigPen16}, 
or adopt ADMM framework after introducing auxiliary variables to split the objective 
and orthogonality  constraints \cite{LaiOsher14}. The previous ones can hardly be extended
to general objective, while the latter ones does not have good performance in general.

In the following subsections, 
we introduce how we come up with a new idea on constructing an efficient infeasible algorithm
for problem \eqref{prob}.

\subsection{The Optimality Condition}

We start from the optimality condition of the optimization problem
with orthogonality constraints \eqref{prob}. 
The first-order optimality condition of problem \eqref{prob} can be written as
\begin{eqnarray}\label{eq:kkt}
\left\{
\begin{array}{c}
\nabla f(X) = X\Lambda;\\
X\zz X = I_p,
\end{array}
\right.
\end{eqnarray}
where $\Lambda\in \mathbb{S}^{p}$ consists of the Lagrangian multipliers of 
the orthogonality constraints. 
Condition \eqref{eq:kkt} has the following
equivalent form where $\Lambda$ is eliminated.
\begin{eqnarray}\label{eq:kkt2}
\left\{
\begin{array}{c}
\nabla f(X) - X\nabla f(X)\zz X =0;\\
X\zz X = I_p.
\end{array}
\right.
\end{eqnarray}

\begin{definition}
We call $X$ a first-order stationary point, 
if condition \eqref{eq:kkt2} holds.
We call $X$ a second-order stationary
point, if it is a first-order stationary point and satisfies
\begin{eqnarray}\label{eq:son}
\tr(Y\zz \nabla^2 f(X) [Y] - \Lambda Y\zz Y)\geq 0,\quad \forall\, Y\in\TX,
\end{eqnarray}
where $\TX:=\{Y\mid Y\zz X+X\zz Y=0 \}$ is the tangent space of the orthogonality constraints at $X$.
\end{definition}

The following proposition can be easily verified and hence its proof is omitted here.
\begin{proposition}
If $X$ is a local minimizer of \eqref{prob}, it has to be a second-order stationary point.
If $X$ is a strict local minimizer\footnote{$X$ is called a strict local minimizer,
if \comm{$X\in\stiefel$} and there exists $\delta>0$ such that $f(X)<f(Y)$ holds for any \comm{$Y\in U_\delta(X) :=\{Y\in\stiefel\mid ||X-Y||\in(0,\delta) \}$}.},
if and only if $X$ is a first-order stationary point and satisfies
\begin{eqnarray}\label{eq:local}
\tr(Y\zz \nabla^2 f(X) [Y] - \Lambda Y\zz Y)> 0,\quad \forall\, 0\neq Y\in\TX.
\end{eqnarray}
\end{proposition}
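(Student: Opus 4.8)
The plan is to exploit the fact that $\stiefel$ is a smooth embedded submanifold of $\Rnp$, so that both the necessary and the sufficient optimality conditions can be obtained by restricting $f$ to feasible curves through $X$ and applying one-dimensional calculus. First I would record two structural facts. The tangent space at a feasible $X$ is $\TX$, and the linear map $Y\mapsto X\zz Y+Y\zz X$ from $\Rnp$ to $\Sp$ is surjective (given $S\in\Sp$, the choice $Y=\tfrac12 XS$ returns $S$ since $X\zz X=I_p$), so the constraint qualification holds everywhere on $\stiefel$ and the normal space is exactly $\{XS\mid S\in\Sp\}$, the Frobenius-orthogonal complement of $\TX$.

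For the first claim (local minimizer $\Rightarrow$ second-order stationary), I would fix any $Y\in\TX$ and build a feasible curve $X(t)\subset\stiefel$ with $X(0)=X$ and $X'(0)=Y$ (for instance a polar or QR retraction, or a geodesic). Setting $\phi(t):=f(X(t))$, local minimality forces $\phi'(0)=0$ and $\phi''(0)\ge 0$. The identity $\phi'(0)=\lj\nabla f(X),Y\rj=0$ for all $Y\in\TX$ says $\nabla f(X)$ lies in the normal space, i.e. $\nabla f(X)=X\Lambda$ with $\Lambda=X\zz\nabla f(X)\in\Sp$, which is \eqref{eq:kkt}/\eqref{eq:kkt2}. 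For the second-order part I would differentiate the feasibility identity $X(t)\zz X(t)=I_p$ twice to get $X\zz X''(0)+X''(0)\zz X=-2Y\zz Y$; substituting $\nabla f(X)=X\Lambda$ into $\phi''(0)=\tr(Y\zz\nabla^2 f(X)[Y])+\lj\nabla f(X),X''(0)\rj$ makes the curvature term collapse, by the symmetry of $\Lambda$, to $\lj X\Lambda,X''(0)\rj=-\tr(\Lambda Y\zz Y)$, yielding exactly \eqref{eq:son}.

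For the sufficiency half of the second claim, I would assume \eqref{eq:kkt2} together with the strict inequality \eqref{eq:local}. Parametrizing a neighborhood of $X$ in $\stiefel$ by a retraction $R_X\colon\TX\to\stiefel$ with $R_X(0)=X$ and $DR_X(0)=\mathrm{id}$, the same two derivative computations give, for unit $V\in\TX$, the expansion $f(R_X(tV))=f(X)+\tfrac{t^2}{2}\,\tr(V\zz\nabla^2 f(X)[V]-\Lambda V\zz V)+o(t^2)$, the linear term vanishing because $\nabla f(X)$ is normal. Since the left-hand side of \eqref{eq:local} is a continuous positive function on the compact unit sphere of $\TX$, it is bounded below by some $c>0$, which upgrades the expansion to a quadratic-growth bound $f(Y)\ge f(X)+c'\norm{Y-X}\fs$ for some $c'>0$ and all feasible $Y$ near $X$; hence $X$ is a strict local minimizer.

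The delicate point I expect to be the real obstacle is the converse direction (strict local minimizer $\Rightarrow$ \eqref{eq:local}). Part~1 delivers only the non-strict inequality \eqref{eq:son}, and a genuinely degenerate minimizer (the analogue of $t\mapsto t^4$ along some tangent direction) would satisfy \eqref{eq:son} with equality while remaining a strict local minimizer, so the stated equivalence is literally correct only when ``strict local minimizer'' is read in the strong/quadratic-growth sense produced by the sufficiency argument. I would therefore either restrict the ``only if'' direction to second-order (nondegenerate) minimizers or reformulate the strict-minimizer clause as quadratic growth, after which the equivalence with \eqref{eq:local} follows by pairing the sufficiency computation above with the necessity computation of Part~1.
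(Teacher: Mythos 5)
The paper offers no proof to compare against: it states that the proposition ``can be easily verified'' and omits the argument entirely. Your embedded-submanifold proof is the standard route and is correct where the statement is correct. The surjectivity of $Y\mapsto X\zz Y+Y\zz X$ (via $Y=\tfrac12 XS$) does give the constraint qualification and the normal-space description $\{XS\mid S\in\Sp\}$; differentiating the feasibility identity twice along a curve collapses $\lj \nabla f(X),X''(0)\rj$ to $-\tr(\Lambda Y\zz Y)$, which yields \eqref{eq:son} at any local minimizer; and the compactness argument on the unit sphere of $\TX$ correctly upgrades the strict inequality \eqref{eq:local} to quadratic growth and hence to strict local minimality. (For that last step you need the $o(t^2)$ term to be uniform over unit tangent directions, i.e.\ $f\in C^2$; this is consistent with the paper's convention that twice differentiability is assumed wherever second-order statements appear.)

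The obstruction you flag in the ``only if'' direction is not a defect of your proof but a genuine error in the proposition as stated. Concretely, take $n=2$, $p=1$ (so $2p\le n$ holds and $\stiefel$ is the unit circle) and $f(x)=x_2^4$. At $X=(1,0)\zz$ we have $\nabla f(X)=0$, so $X$ is a first-order stationary point with $\Lambda=0$, and every nearby feasible $Y=(\cos\theta,\sin\theta)\zz$ with $\theta\neq 0$ satisfies $f(Y)=\sin^4\theta>0=f(X)$; thus $X$ is a strict local minimizer exactly in the sense of the paper's footnote (indeed even an isolated one). Yet $\nabla^2 f(X)=0$, so the quadratic form in \eqref{eq:local} vanishes identically on $\TX=\{(0,t)\zz : t\in\R\}$ and the strict inequality fails. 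Hence only the ``if'' half of the stated equivalence is true, and your proposed repairs --- reading ``strict local minimizer'' as quadratic growth, or asserting necessity only for nondegenerate minimizers --- are exactly what is needed to make the proposition correct. The same degeneracy, incidentally, is worth keeping in mind later in the paper, where positivity of the quantity $\tau$ in the local convergence analysis is inferred from strict local minimality by the same reasoning.
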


\subsection{Augmented Lagrangian Method}

A straightforward idea to solve \eqref{prob} without requiring feasibility in 
each iteration is to employ the \revise{Augmented Lagrangian Method (ALM)
\cite{powell1969method,Jorge06,Bertsekas14}}, which is described
in Algorithm \ref{alg:ALM}.
\begin{algorithm2e}[ht]
	\caption{Augmented Lagrangian Method (ALM)}
	\label{alg:ALM}
	\SetKwInOut{Input}{input}\SetKwInOut{Output}{output}
	\SetKwComment{Comment}{}{}
	%\lines numbered
	\BlankLine %\dontprintsemicolon
	\textbf{Input:} choose initial guess $\Lambda^0$ for the dual variables, and set $k:=0$\;
	\While{certain stopping criterion is not reached}
	{
		Minimize the augmented Lagrangian function with respect to the prime variables $X$:
		\begin{eqnarray*}
			X^{k+1}:=\min\limits_{X\in\Rnp}\quad {\cL}_\beta(X,\Lambda^k),
		\end{eqnarray*}
		where the augmented Lagrangian function fo problem \eqref{prob} is defined as
		\begin{eqnarray}\label{eq:Lag}
		\cLb(X,\Lambda) &=& f(X) - \frac{1}{2}\langle\Lambda, 
		X\zz X-I_p\rangle + \frac{\beta}{4} ||X\zz X-I_p||\fs\nonumber\\
		&=& f(X) + \frac{\beta}{4} \left\|X\zz X- \left( I_p + \frac{1}{\beta}\Lambda\right)\right\|\fs
		- \frac{1}{4\beta} ||\Lambda||\fs.
		\end{eqnarray}
		
		Update the Lagrangian multipliers
		\begin{eqnarray}\label{eq:Lambda-alm}
		\Lambda^{k+1} := \Lambda^k - \beta ({X^{k+1}}\zz X^{k+1} -I_p).
		\end{eqnarray}
		
		Update the penalty parameter $\beta$ if necessary.
		Set $k:=k+1$.
	}
	\textbf{Output:} $X^k$.
\end{algorithm2e}

It is well-known that the augmented Lagrangian function \eqref{eq:Lag} 
is an exact penalty if the parameter $\beta$ is sufficiently large.
%However, such property is only valid for approaches
%in which the multiplier is updated in each iteration by dual gradient ascend or the solution to the linear
%least squares problem generated by minimizing the KKT violation with respect to the multiplier.
Algorithm \ref{alg:ALM} works very well for problem with linear constraints.
For optimization problems with nonlinear constraints, it is not clear
how to choose the parameter $\beta$ in practice, which is very sensitive to 
the numerical performance.

The purpose of this work is to find an infeasible algorithm for solving \eqref{prob}
at similar cost of the existent feasible methods. Otherwise, we can hardly gain
much from the parallelization. To this end, we carefully test Algorithm \ref{alg:ALM}
and try our best to tune the parameter $\beta$. Unfortunately, 
%Moreover, since how large $\beta$ should be is not clear in general, the penalty parameter
%$\beta$ is also need to be updated (basically enlarging) in the iterative process. 
for solving optimization problems with orthogonality constraints \eqref{prob},
 %as we will partly show {\color{red}(Comparison on classical ALM is not added yet in numerical section.)} in the numerical experiment section,
the efficiency of classical ALM is far from being satisfactory.  %compare with the existent algorithms.

Therefore, we need to employ a new idea to remould the classical ALM.
According to the conditions \eqref{eq:kkt} and \eqref{eq:kkt2}, 
it is not difficult to verify that the Lagrangian multipliers $\Lambda$
have the following closed-form expression at any first-order stationary point,
\begin{eqnarray}\label{eq:closed-multi}
\Lambda = \nabla f(X)\zz X.
\end{eqnarray}
A straightforward idea is to use the following symmetrized
form of \eqref{eq:closed-multi}
\begin{eqnarray}\label{eq:multi}
\Lambda =\varPsi(\nabla f(X)\zz X)
\end{eqnarray}
as a new multipliers updating rule.
The symmetrization is necessary because the symmetry of the expression $\nabla f(X)\zz X$
can not be guaranteed in each iteration.

As we will demonstrate in the following lemma and the theoretical analyses in Section 4,
an explicit lower bound of the penalty parameter $\beta$ can be estimated
if updating rule \eqref{eq:multi} is applied. Hence, the update of the penalty 
parameter $\beta$ can be waived. Moreover, the numerical experiments verify the validation
of this new updating rule.
	
\begin{lemma}\label{lm:1}
	Let $X^*$ be a second-order stationary point of 
	\begin{eqnarray}\label{eq:Lmin}
	\min\limits_{X\in\Rnp} \cLb(X,\Lambda^*)
	\end{eqnarray}
	with $\Lambda^* = \varPsi(\nabla f(X^*)\zz X^*)$.
	Suppose $\beta>\lambda_{\max}(\hess{X^*})$.
	Then $X^*$ is a second-order stationary point of problem \eqref{prob}. 
	Namely, optimality conditions \eqref{eq:kkt} and \eqref{eq:kkt2} hold at $X^*$.
\end{lemma}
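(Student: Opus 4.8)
The plan is to extract information from both the first- and second-order stationarity of $X^*$ for the unconstrained subproblem $\min_{X\in\Rnp}\cLb(X,\Lambda^*)$ and combine it with the defining relation $\Lambda^*=\varPsi(\nabla f(X^*)\zz X^*)$. Writing $E:=(X^*)\zz X^*-I_p$ for the infeasibility, the whole problem reduces to showing $E=0$; feasibility then makes the remaining conclusions immediate. First I would compute the Euclidean gradient
\[
\nabla_X\cLb(X,\Lambda^*)=\nabla f(X)-X\Lambda^*+\beta X(X\zz X-I_p),
\]
and read off the first-order condition at $X^*$ as $\nabla f(X^*)=X^*(\Lambda^*-\beta E)$ (so $\nabla f(X^*)$ lies in the range of $X^*$). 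Pre-multiplying by $(X^*)\zz$, adding the transpose, and substituting $2\Lambda^*=(X^*)\zz\nabla f(X^*)+\nabla f(X^*)\zz X^*$, the symmetric $\Lambda^*$ contributions cancel and I expect to reach the clean matrix identity
\[
E\Lambda^*+\Lambda^* E=2\beta E+2\beta E^2 .
\]

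Next I would use second-order stationarity. Since the subproblem is unconstrained, its Hessian is positive semidefinite along every $Y\in\Rnp$, and a direct computation gives
\[
\langle Y,\nabla^2_X\cLb(X^*,\Lambda^*)[Y]\rangle=\langle Y,\hess{X^*}[Y]\rangle-\tr(\Lambda^* Y\zz Y)+\beta\big(\tr(Y\zz Y E)+\tr(Y\zz X^*Y\zz X^*)+\tr(Y\zz X^*(X^*)\zz Y)\big).
\]
The idea is to annihilate the two cross terms by testing with directions orthogonal to the range of $X^*$: for a unit eigenvector $v$ of $E$ with eigenvalue $d$, I would take $Y=zv\zz$ with $z\in\Rn$ a unit vector satisfying $(X^*)\zz z=0$, which exists because $n\ge 2p$ forces the orthogonal complement of the range of $X^*$ to be nonzero. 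Then $Y\zz Y=vv\zz$, the last two traces vanish, and the bound $\langle Y,\hess{X^*}[Y]\rangle\le\lambda_{\max}(\hess{X^*})$ yields the scalar inequality $v\zz\Lambda^* v\le\lambda_{\max}(\hess{X^*})+\beta d$.

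The main obstacle is the passage from the matrix identity above to $E=0$, because $E$ and $\Lambda^*$ need not commute, so $E$ cannot simply be solved out. The device that resolves this is to contract the identity with the same unit eigenvector $v$ of $E$: every term collapses to scalars, giving $2d\,v\zz\Lambda^* v=2\beta d(1+d)$, hence $v\zz\Lambda^* v=\beta(1+d)$ whenever $d\neq 0$. Feeding this into the second-order inequality produces $\beta(1+d)\le\lambda_{\max}(\hess{X^*})+\beta d$, i.e. $\beta\le\lambda_{\max}(\hess{X^*})$, contradicting the hypothesis $\beta>\lambda_{\max}(\hess{X^*})$. Therefore no nonzero eigenvalue of $E$ can occur, so $E=0$ and $X^*$ is feasible.

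Finally, with $E=0$ the first-order relation reads $\nabla f(X^*)=X^*\Lambda^*$, which is precisely \eqref{eq:kkt} (equivalently \eqref{eq:kkt2}). For the curvature condition \eqref{eq:son} I would restrict the Hessian quadratic form above to tangent directions $Y\in\TX$. Setting $W=(X^*)\zz Y$, tangency gives $W+W\zz=0$, so $W$ is skew; then $\tr(Y\zz X^*Y\zz X^*)=\tr(W^2)$ and $\tr(Y\zz X^*(X^*)\zz Y)=\tr(WW\zz)=-\tr(W^2)$ cancel, and $\beta\tr(Y\zz Y E)=0$ since $E=0$. What survives is exactly $\tr(Y\zz\hess{X^*}[Y]-\Lambda^* Y\zz Y)$, which is nonnegative because the full Hessian is positive semidefinite; this is \eqref{eq:son}, completing the argument.
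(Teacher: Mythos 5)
Your proof is correct and follows essentially the same route as the paper's: the paper diagonalizes via the SVD $X^*=U\Sigma V\zz$ (your eigenvectors of $E$ are exactly its right singular vectors, with $d=\sigma^2-1$), derives the same first-order identity in diagonalized form, and probes the Hessian with a direction $S=\tilde{U}DV\zz$ supported on the orthogonal complement of the range of $X^*$, which is just an aggregate of your rank-one probes $zv\zz$. The only differences are cosmetic: you contract a full Sylvester-type matrix identity against eigenvectors of $E$, where the paper extracts diagonals in the $V$-basis via its $\Phi$ operator.
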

\begin{proof}
	For convenience, we abuse the notation slightly by deleting the superscript $*$ from $X^*$.
	First, we have
	\begin{eqnarray}\label{eq:S1}
	\nabla_X \cLb(X,\Lambda) &=& \nabla f(X) +\beta X\left( X\zz X -\left(I_p+\frac{1}{\beta}\Lambda\right)\right);\\
	\label{eq:S2}
	\nabla_{XX}^2 \cLb(X,\Lambda)[S] &=& \nabla^2 f(X)[S] +\beta S\left( X\zz X -\left(I_p+\frac{1}{\beta}\Lambda\right)\right) +\beta X(S\zz X+X\zz S).
	\end{eqnarray}
	
	Since $X$ is the second-order stationary point of \eqref{eq:Lmin} 
	with $\Lambda = \nabla f(X)\zz X$, we have
	\begin{eqnarray}\label{eq:O1}
	\nabla \cLb(X,\Lambda) &=& 0;\\
	\label{eq:O2}
	\langle S, \nabla_{XX}^2 \cLb(X,\Lambda)[S] \rangle &\geq& 0,\quad \forall S\neq 0.
	\end{eqnarray}
	Substituting \eqref{eq:S1} into \eqref{eq:O1}, we obtain
	\begin{eqnarray}\label{eq:temp1}
	\nabla f(X) - X \Lambda - \beta X(I_p- X\zz X) =0.
	\end{eqnarray}
	Left multiplying $X\zz$ into both sides of \eqref{eq:temp1}, we have
	\begin{eqnarray}\label{eq:temp2}
	X\zz \nabla f(X) = X\zz X \Lambda + \beta X\zz X(I_p- X\zz X).
	\end{eqnarray}
	
	Suppose $X=U\Sigma V\zz$ is the singular value decomposition of $X$ in economy-size,
	which implies $X\zz X =V\Sigma^2 V\zz$. Then, we further have
	\begin{eqnarray*}
		X\zz \nabla f(X) -\beta V\Sigma^2 V\zz = V\Sigma^2 V\zz \Lambda - \beta V\Sigma^4 V\zz.
	\end{eqnarray*}
	Left multiplying $V\zz$ and right multiplying $V$ to both sides of the above equality, we arrive at
	\begin{eqnarray*}
		V\zz X\zz \nabla f(X) V - \beta \Sigma^2 = \Sigma^2 (V\zz \Lambda V -\beta \Sigma^2).
	\end{eqnarray*}
	Taking the $\Phi$ operator and using the fact that 
	\begin{eqnarray}\label{eq:S4}
		\diag(V\zz X\zz \nabla f(X) V)
		= \diag(V\zz \nabla f(X)\zz X V)
		= \diag(V\zz \Lambda V),
	\end{eqnarray}
	we have
	\begin{eqnarray}\label{eq:cl}
		(I_p-\Sigma^2)(\Phi(V\zz \Lambda V) -\beta\Sigma^2)=0,
	\end{eqnarray}
	which implies that 
	\begin{eqnarray}\label{eq:zero}
	D(\Phi(V\zz \Lambda V) -\beta\Sigma^2)=0,
	\end{eqnarray}
	where $p$-by-$p$ diagonal matrix $D$ satisfies
	$$D_{ii}=\left\{
	\begin{array}{cc}
	0, &\mbox{if\,}(I_p-\Sigma^2)_{ii}=0;\\
	1, &\mbox{otherwise,}
	\end{array}
	\right.\quad \forall i=1,...,p.$$
	
	On the other hand, since $n\geq 2p$, there exists $\tilde{U}\in \stiefel$ satisfying $\tilde{U}\zz U=0$. let $S=\tilde{U}DV\zz$. If $S\neq 0$, we substitute \comm{$S$} into \eqref{eq:S2} and obtain
	\begin{eqnarray*}
		\langle S, \nabla_{XX}^2 \cLb(X,\Lambda)[S] \rangle &=& \tr(S\zz \nabla^2 f(X) [S]) -\beta \tr(S\zz S) -\tr(S\zz S (\Lambda -\beta X\zz X))\\
		&=& \tr\left(S\zz (\nabla^2 f(X)-\beta I)[S]\right) -\tr(V\zz S\zz SVV\zz (\Lambda -\beta 
		V\Sigma^2 V\zz)V)\\
		&=& \tr\left(S\zz (\nabla^2 f(X)-\beta I)[S]\right) -\tr(D^2 (V\zz \Lambda V -\beta \Sigma^2))\\
		&=& \tr\left(S\zz (\nabla^2 f(X)-\beta I)[S]\right) -\tr(D^2 (\Phi(V\zz \Lambda V) -\beta \Sigma^2)).
	\end{eqnarray*}
	Here $I$ stands for the identity mapping from $\Rnp$ to $\Rnp$.
	Combining with the second-order optimality condition \eqref{eq:O2}, relationship \eqref{eq:zero}
	and the assumption on $\beta$, we have
	\begin{eqnarray}\label{eq:main}
	0\leq \langle S, \nabla_{XX}^2 \cLb(X,\Lambda)[S] \rangle =\tr\left(S\zz (\nabla^2 f(X)-\beta I)[S]\right) 
	<0,
	\end{eqnarray}
	which leads to contradiction. Hence, $S=0$, which immediately implies that $\Sigma =I_p$.
	Therefore, we have $X\in\stiefel$. Together with \eqref{eq:O1} and \eqref{eq:O2}, 
	we can easily show that the optimality condition
	\eqref{eq:kkt} and \eqref{eq:kkt2} hold. This completes the proof.
\end{proof}

%\remove{The} 
Lemma \ref{lm:1} guarantees that \revise{the} augmented Lagrangian function is still an exact penalty 
function with the Lagrangian multipliers updated by explicit formula \eqref{eq:multi}.
However, to achieve 
the convergence results for first-order methods, we 
need a first-order version of Lemma \ref{lm:1}. Moreover, to
obtain the global convergence rate, 
the feasibility should be controlled by the first-order
optimality violation.
 
\begin{lemma}\label{lm:2}
	For any $X^*$ satisfying
	$\sigma_{\min}(X^*)>0$\revise{,} suppose $\beta>\left(||\nabla f(X^*)||_2\cdot ||X^*||_2+\delta\right)/\sigma^2_{\min}(X^*)$ with $\delta>0$.
	Then it holds
	\begin{eqnarray*}
			||{X^*}\zz X^* -I_p||\ff \leq \frac{||X^*||_2}{\delta}\cdot||\nabla_X \cLb(X^*,\Lambda^*)||\ff,
	\end{eqnarray*}
	with $\Lambda^* = \varPsi(\nabla f(X^*)\zz X^*)$.
	In particular, if it happens that $X^*$ is a first-order stationary point of 
	\begin{eqnarray*} %\label{eq:Lmin}
		\min\limits_{X\in\Rnp} \cLb(X,\Lambda^*)
	\end{eqnarray*}	
	with $\Lambda^* = \varPsi(\nabla f(X^*)\zz X^*)$,
	then $X^*$ is \revise{also} a first-order stationary point of problem \eqref{prob}. 
\end{lemma}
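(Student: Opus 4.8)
The plan is to turn the computation behind Lemma \ref{lm:1} into a quantitative estimate, keeping the gradient residual as an explicit error term rather than setting it to zero. Abbreviate $G := \nabla_X \cLb(X^*,\Lambda^*)$ and write $W := {X^*}\zz X^* - I_p \in \Sp$ for the feasibility violation. From the gradient formula \eqref{eq:S1} one has $G = \nabla f(X^*) + \beta X^* W - X^* \Lambda^*$. The first step is to left-multiply this identity by ${X^*}\zz$ and apply the symmetrizing operator $\varPsi$. The key point, which is precisely what makes the closed-form multiplier \eqref{eq:multi} natural, is that $\varPsi({X^*}\zz \nabla f(X^*)) = \Lambda^*$ by the very definition of $\Lambda^*$. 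Since $W$ and ${X^*}\zz X^* = I_p + W$ commute, the product ${X^*}\zz X^* W$ is already symmetric, so this step collapses to the clean symmetric identity
\begin{eqnarray*}
\beta\, {X^*}\zz X^*\, W - \varPsi(W\Lambda^*) = \varPsi({X^*}\zz G).
\end{eqnarray*}

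Next I would take the trace inner product of both sides with $W$ and bound each term. On the left I would use two facts: first, because ${X^*}\zz X^* \succeq \sigma_{\min}^2(X^*) I_p$ and $W^2 \succeq 0$, the positive-semidefinite trace inequality gives $\tr({X^*}\zz X^* W^2) \geq \sigma_{\min}^2(X^*)\|W\|\fs$; second, for symmetric $W$ the identity $\langle W, \varPsi(M)\rangle = \tr(WM)$ yields $\langle W, \varPsi(W\Lambda^*)\rangle = \tr(W^2\Lambda^*)$, whose absolute value is at most $\|\Lambda^*\|_2\|W\|\fs$, and $\|\Lambda^*\|_2 \leq \|\nabla f(X^*)\|_2\|X^*\|_2$ because $\varPsi$ is nonexpansive in the spectral norm. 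Combining these with the hypothesis on $\beta$ produces a coefficient strictly larger than $\delta$,
\begin{eqnarray*}
\langle W, \varPsi({X^*}\zz G)\rangle \geq \left(\beta\sigma_{\min}^2(X^*) - \|\nabla f(X^*)\|_2\|X^*\|_2\right)\|W\|\fs \geq \delta\|W\|\fs.
\end{eqnarray*}

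For the right-hand side the same symmetric-$W$ identity gives $\langle W, \varPsi({X^*}\zz G)\rangle = \tr(W\, {X^*}\zz G) = \langle X^* W, G\rangle$, which is at most $\|X^*\|_2\|W\|\ff\|G\|\ff$ by Cauchy--Schwarz and submultiplicativity of the spectral norm. Chaining the two displays yields $\delta\|W\|\fs \leq \|X^*\|_2\|W\|\ff\|G\|\ff$; dividing by $\|W\|\ff$ (the case $W=0$ being trivial) gives the claimed bound $\|{X^*}\zz X^* - I_p\|\ff \leq (\|X^*\|_2/\delta)\,\|G\|\ff$. The ``in particular'' statement is then immediate: if $G = 0$ the bound forces $W = 0$, i.e.\ $X^* \in \stiefel$, and substituting $W = 0$ back into $G = \nabla f(X^*) + \beta X^* W - X^* \Lambda^* = 0$ gives $\nabla f(X^*) = X^* \Lambda^*$ with $\Lambda^* \in \Sp$, which is exactly \eqref{eq:kkt} and hence \eqref{eq:kkt2}.

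I expect the only delicate point to be the bookkeeping around the non-commuting product $W\Lambda^*$: the symmetrization must be arranged so that the cross term $\varPsi(W\Lambda^*)$ pairs against $W$ as a genuine trace $\tr(W^2\Lambda^*)$ with no uncontrolled skew part left over, and so that the analogous pairing on the right reassembles into $\langle X^* W, G\rangle$ to recover the factor $\|X^*\|_2$. Once the symmetric identity is in hand, everything else reduces to standard positive-semidefinite trace and spectral-norm inequalities together with the defining property $\varPsi({X^*}\zz \nabla f(X^*)) = \Lambda^*$ already exploited in Lemma \ref{lm:1}.
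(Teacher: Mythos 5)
Your proof is correct, and it takes a genuinely different route from the paper's. The paper proves the bound through the singular value decomposition $X^*=U\Sigma V\zz$: it left-multiplies \eqref{eq:S1} by ${X^*}\zz$, conjugates by $V$, and applies the diagonal-extraction operator $\Phi$ together with the fact \eqref{eq:S4} to obtain the diagonal-matrix identity $\Phi(V\zz {X^*}\zz G V)=(I_p-\Sigma^2)\bigl(\Phi(V\zz\Lambda^* V)-\beta\Sigma^2\bigr)$; the hypothesis on $\beta$ then gives $\sigma_{\min}\bigl(\beta\Sigma^2-\Phi(V\zz\Lambda^* V)\bigr)\geq\delta$, and the Frobenius bound follows since $\norm{I_p-\Sigma^2}\ff=\norm{I_p-{X^*}\zz X^*}\ff$. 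You instead avoid the SVD entirely: symmetrizing ${X^*}\zz G$ kills the gradient term exactly (because $\varPsi({X^*}\zz\nabla f(X^*))=\Lambda^*$ by construction), leaving $\varPsi({X^*}\zz G)=\beta\,{X^*}\zz X^* W-\varPsi(W\Lambda^*)$ with $W={X^*}\zz X^*-I_p$, and testing this identity against $W$ reduces everything to the positive-semidefinite trace inequalities $\tr({X^*}\zz X^* W^2)\geq\sigma^2_{\min}(X^*)\norm{W}\fs$ and $\abs{\tr(W^2\Lambda^*)}\leq\norm{\Lambda^*}_2\norm{W}\fs$. Both arguments exploit the same margin $\beta\sigma^2_{\min}(X^*)-\norm{\nabla f(X^*)}_2\norm{X^*}_2>\delta$ and produce the same constant $\norm{X^*}_2/\delta$. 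What the paper's version buys is uniformity of machinery: the SVD-plus-$\Phi$ computation is literally the one already set up for Lemma \ref{lm:1} (equations \eqref{eq:S1}, \eqref{eq:S4}), so the two lemmas share their algebra, and the matrix-level identity \eqref{eq:S3} carries slightly more information than a scalar pairing. What your version buys is economy and transparency: no decomposition is needed, the "test against the error" inner-product argument is self-contained, and it makes visible the structural reason the multiplier formula \eqref{eq:multi} works, namely that symmetrization cancels $\nabla f$ exactly and leaves a quadratic form in the feasibility violation that $\beta$ makes coercive. Your handling of the delicate cross term is also sound: for symmetric $W$ one indeed has $\langle W,\varPsi(M)\rangle=\tr(WM)$, so no skew part escapes the estimate, and the "in particular" conclusion follows as you say by setting $G=0$, forcing $W=0$ and hence $\nabla f(X^*)=X^*\Lambda^*$ with symmetric $\Lambda^*$, which is \eqref{eq:kkt}.
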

\begin{proof}
	For brevity, we denote $G=\nabla_X \cLb(X,\Lambda)$. Left multiplying $X\zz$
	into both sides of \eqref{eq:S1} and using 
	the singular value decomposition $X=U\Sigma V\zz$, we have
	\begin{eqnarray*}
		X\zz G = X\zz \nabla f(X) - \beta V\Sigma^2 V\zz -
		V\Sigma^2 V\zz \Lambda + \beta V \Sigma^4 V\zz.
	\end{eqnarray*}
	Left multiplying $V\zz$ and right multiplying $V$ to both sides of the above equality,
	we obtain
	\begin{eqnarray*}
		V\zz X\zz G V = V\zz X\zz \nabla f(X)V
		- \beta \Sigma^2 - \Sigma^2 \left( V\zz \Lambda -\beta \Sigma^2\right).
	\end{eqnarray*}
	Taking the $\Phi$ operator and using the fact \eqref{eq:S4}, we arrive at
	\begin{eqnarray}\label{eq:S3}
		\Phi(V\zz X\zz G V) = (I_p-\Sigma^2)(\Phi(V\zz \Lambda V) -\beta\Sigma^2).
	\end{eqnarray}
	\revise{Since} $\beta>\left(||\nabla f(X^*)||\ff\cdot ||X^*||_2+\delta\right)/\sigma^2_{\min}(X^*)$, % \remove{holds},
	we have
	\begin{eqnarray*}
		\beta \sigma^2_{\min}(X^*) \geq ||\nabla f(X^*)||_2 \cdot ||X^*||_2+\delta,
	\end{eqnarray*}
	which implies
	\begin{eqnarray*}
		\sigma_{\min}(\beta \Sigma^2)  \geq ||V\zz \Lambda V ||_2 + \delta
		\geq ||\Phi (V\zz \Lambda V) ||_2 + \delta.
	\end{eqnarray*}
    Hence, it holds that 
    \begin{eqnarray}\label{eq:S5}
    	\sigma_{\min}\left(\beta \Sigma^2 -\Phi (V\zz \Lambda V) \right)  \geq \delta.
    \end{eqnarray}
	Submitting \eqref{eq:S5} into \eqref{eq:S3}, we arrive at 
	\begin{eqnarray*}
	||X||_2 ||G||\ff &\geq& ||\Phi(V\zz X\zz G V)||\ff =  ||(I_p-\Sigma^2)(\Phi(V\zz \Lambda V) -\beta\Sigma^2)||\ff\\
		&\geq &  ||I_p-\Sigma^2||\ff \cdot \sigma_{\min}\left(\beta\Sigma^2 - \Phi(V\zz \Lambda V)\right)
		\geq  ||I_p-X\zz X||\ff \cdot \delta
	\end{eqnarray*}
    and complete the proof.
\end{proof}

\section{Parallelizable Algorithms}

In this section, we introduce a parallelizable approach and one of its variant for 
optimization problem with orthogonality constraints \eqref{prob}. 
Both of these two approaches are based on the augmented Lagrangian function
\eqref{eq:Lag} and employ the new idea of updating the 
multipliers by explicit expression instead of dual ascent step in Algorithm \ref{alg:ALM}.

Another distinction between our algorithms
and the classical ALM
is that the minimization subproblem for the prime variables 
is replaced by %\remove{minimizing} 
a proximal linearized approximation.

\subsection{The Proximal Linearized Augmented Lagrangian Algorithm}
We describe our main algorithm framework in Algorithm \ref{alg:PLAM}.
\begin{algorithm2e}[ht]
	\caption{Proximal Linearized Augmented Lagrangian Algorithm (PLAM)}
	\label{alg:PLAM}
	\SetKwInOut{Input}{input}\SetKwInOut{Output}{output}
	\SetKwComment{Comment}{}{}
	%\lines numbered
	\BlankLine %\dontprintsemicolon
	\textbf{Input:} choose initial guess $X^0$, and set $k:=0$\;
	\While{certain stopping criterion is not reached}
	{
		Compute the Lagrangian multipliers 
		\begin{eqnarray}\label{eq:Lambda1}
		\Lambda^k := \varPsi(\nabla f(X^k)\zz X^k).
		\end{eqnarray}
		
		Minimize the following proximal linearized Lagrangian function 
		\begin{eqnarray}\label{eq:PLLag}
		X^{k+1}:=\argmin\limits_{X\in\Rnp} \, \tilde{\cL}_\beta(X)= \tr(\nabla_X \cLb(X^k,\Lambda^k)\zz (X-X^k)) + \frac{\eta^k}{2}
		||X-X^k||\fs.
		\end{eqnarray}
		
		Set $k:=k+1$.
	}
	\textbf{Output:} $X^k$.
\end{algorithm2e}

The main calculation costs of Algorithm \ref{alg:PLAM} concentrate
at Step 3 and 4. Step 3 only involves BLAS3 calculation. The minimization
subproblem \eqref{eq:PLLag} in Step 4 is nothing but a gradient step
\begin{eqnarray}\label{eq:PLAM-main}
		X^{k+1} &=& X^k - \frac{1}{\eta^k}\nabla_X \cLb(X^k,\Lambda^k)\nonumber\\
		&=& X^k - \frac{1}{\eta^k}\dkh{\nabla f(X^k) + \beta X^k\dkh{{X^k}\zz X^k -I_p-\frac{1}{\beta}\Lambda^k}}\nonumber\\
		&=& X^k - \frac{1}{\eta^k}\left(
		\nabla f(X^k) -  X^k \varPsi(\nabla f(X^k)\zz X^k)
		+ \beta X^k({X^k}\zz X^k - I_p)
		\right),
\end{eqnarray}
where the last step is due to the updating formula \eqref{eq:Lambda1}.
Apparently, the arithmetic operations involved in
\eqref{eq:PLAM-main} belong to BLAS3 as well. 

We notice that ${1}/{\eta^k}$ is nothing but the stepsize of
gradient step. Hence, the proximal parameter $\eta^k$
can be chosen in the same manner as how we choose stepsize for gradient
methods. This issue will be described in details in Section 5.

\subsection{Parallelizable Column-wise Block Minimization}
An obvious demerit of PLAM is the boundedness of the iterate sequence
can hardly be expected without any restriction on
the penalty parameter $\beta$ and the proximal parameter $\eta^k$. 
Theoretically, to guarantee the global convergence, 
$\beta$ should be sufficiently large. Accordingly, $\eta^k$ should 
be large as well which means sufficiently small stepsize is required
and slow convergence can be expected.
In fact, according to the 
empirical observations, the performance of PLAM is very sensitive to 
parameters
$\beta$ and $\eta^k$. In other word, it is not easy to 
tune these two parameters to guarantee good performance of Algorithm \ref{alg:PLAM} in general.

Therefore, we put forward an upgraded version of PLAM.
It is based on PLAM, but redundant column-wise unit sphere constraints
are imposed to Step 4. Therefore, the proximal gradient takes the place of the gradient step in
the Step 4 of Algorithm \ref{alg:PLAM}.
With redundant constraints, the resulting iterate sequence will 
then be restricted to a compact set and hence bounded.
We describe the framework of this upgraded PLAM in Algorithm \ref{alg:PCAL}.

\begin{algorithm2e}[ht]
	\caption{Parallelizable Column-wise Block Minimization for PLAM (PCAL)}
	\label{alg:PCAL}
	\SetKwInOut{Input}{input}\SetKwInOut{Output}{output}
	\SetKwComment{Comment}{}{}
	%\lines numbered
	\BlankLine %\dontprintsemicolon
	\textbf{Input:} choose initial guess $X^0$, and set $k:=0$\;
	\While{certain stopping criterion is not reached}
	{
		Compute the Lagrangian multipliers by \eqref{eq:Lambda1} or
		\begin{eqnarray}\label{eq:lambda2}
			\Lambda^k :=  \varPsi(\nabla f(X^k)\zz X^k) + \Phi\left({X^k}\zz 
			\nabla_X L_\beta (X^k,\varPsi(\nabla f(X^k)\zz X^k))
			\right).
		\end{eqnarray}
	
		\For{$i = 1,...,p$}{ 
			Minimize the following proximal linearized Lagrangian function
			\begin{eqnarray}\label{eq:PLLagSph}
				\begin{array}{rcl}
				X_i^{k+1}:=\argmin\limits_{x\in\Rn} && \tilde{\cL}^{(i)}_\beta(x)= \nabla_{X} \cLb(X^k,\Lambda^k)_i\zz (x-X_i^k) + \frac{\eta^k}{2}
				||x-X_i^k||_2^2,\\
				\st && ||x||_2 = 1.
				\end{array}
			\end{eqnarray}
			
			Update $X^{k+1}=[X_1^{k+1},\dots,X_p^{k+1}]$, and set $k:=k+1$.
		}
	}
	\textbf{Output:} $X^k$.
\end{algorithm2e}

Subproblem \eqref{eq:PLLagSph} in Algorithm \ref{alg:PCAL} can be solved in a 
column-wisely parallel fashion. In fact, it is of closed-form solution
\begin{eqnarray*}
		X_i^{k+1} &=& \frac{X_i^k -\frac{1}{\eta^k}\nabla_{X_i} \cLb(X^k,\Lambda^k) }{\norm{X_i^k -\frac{1}{\eta^k}\nabla_{X_i} \cLb(X^k,\Lambda^k)}_2}.	
\end{eqnarray*}	

For PCAL, we can update the Lagrangian multipliers in the same manner as PLAM,
i.e. by formula \eqref{eq:Lambda1}. To obtain a better performance, we can also use 
the heuristic formula \eqref{eq:lambda2}. The motivation of updating formula \eqref{eq:lambda2}
comes from the following observation.
In the KKT condition \eqref{eq:kkt}, we impose an additional term for the redundant 
sphere constraints. Namely,
\begin{eqnarray}\label{eq:kkt3}
\left\{
\begin{array}{c}
\nabla f(X) = X\Lambda + XD;\\
X\zz X = I_p,
\end{array}
\right.
\end{eqnarray}
where $D$ is a diagonal matrix. Furthermore, $D$ is determined by the 
Lagrangian multiplier of $X_i$ in the subproblem \eqref{eq:PLLagSph}.

\subsection{Computational Cost}

In this subsection, we compare the computational cost per iteration among
MOptQR, PLAM and PCAL. The computational cost of the basic linear algebra operations
and the overall costs of the aforementioned algorithms are listed in {Table} \ref{tab:cost}.
%In the detailed implementations of PLAM and PCAL, we evaluate the KKT violation by the following expression, $$\grad{X}- X\varPsi(\nabla f(X)\zz X).$$ There is no need to calculate $X(\grad{X}\zz X)$.

\begin{table}[htbp]
	\scriptsize
	\centering
	\begin{tabular}{c|c|c|c|c}
		\hline
		\toprule[.3mm]
		\multicolumn{5}{c}{\bf evaluate function} \\\hline
		\multirow{4}{*}{$f(X):=\frac{1}{2}\tr(X\zz A X) +\tr(G\zz X)$} & \multirow{2}{*}{$AX$} & {$A$: dense} & {$A$: sparse} & {$A$: sparse}\\\cline{3-5}
		& & $2n^2p$ & $O(np)$ & \multirow{3}{*}{$O(np)$}\\\cline{2-4}
		& $\nabla f(X)=AX+G$ & \multicolumn{2}{c|}{$np$} &\\\cline{2-4}
		& $\frac{1}{2}\tr(X\zz A X) +\tr(G\zz X)$ & \multicolumn{2}{c|}{$4np$} &\\\hline
		
		\multicolumn{5}{c}{\bf KKT: $\nabla f(X)-X{\nabla f(X)}\zz X$} \\\hline
		${\nabla f(X)}\zz X$ & \multicolumn{3}{c|}{$2np^2$} & \multirow{2}{*}{$4np^2+np$}\\\cline{1-4}
		$X({\nabla f(X)}\zz X)$ & \multicolumn{3}{c|}{$2np^2$} &\\\hline
		
		\multicolumn{5}{c}{\bf feasibility: $X\zz X -I$} \\\hline
		${X}\zz X$ & \multicolumn{3}{c|}{$np^2$} & \multirow{1}{*}{$np^2+np$}\\\hline
		
		\multicolumn{5}{c}{\bf solvers} \\\hline
		\multirow{2}{*}{PLAM} & $X(X\zz X-I)$ & \multicolumn{2}{c|}{$2np^2$} & \multirow{2}{*}{$4np^2+O(np)$}\\\cline{2-4}
		& $X\varPsi(\nabla f(X)\zz X)$ & \multicolumn{2}{c|}{$2np^2$} & \\\cline{1-5}	
		
		\multirow{4}{*}{PCAL} & $X(X\zz X-I)$ & \multicolumn{2}{c|}{$2np^2$} & \multirow{4}{*}{$4np^2+O(np)$}\\\cline{2-4}
		& $X\varPsi(\nabla f(X)\zz X)$ & \multicolumn{2}{c|}{$2np^2$} & \\\cline{2-4}
		& $\Phi\left({X^k}\zz 
		\nabla_X L_\beta (X^k,\varPsi(\nabla f(X^k)\zz X^k))
		\right)$ & \multicolumn{2}{c|}{$O(np)$} & \\\cline{2-4}
		& $X\Lambda=X\varPsi(\cdot)+X\Phi(\cdot)$ & \multicolumn{2}{c|}{$O(np)$} & \\\cline{1-5}
		
		\multirow{4}{*}{MOptQR (cholesky $LL\zz$)} & $V:=X-\tau({\nabla f(X)}-X{\nabla f(X)}\zz X)$ & \multicolumn{2}{c|}{$2np$} & \multirow{4}{*}{$3np^2+{\color{red}O({p^3})}+O(np)$}\\\cline{2-4}
		& $V\zz  V$ & \multicolumn{2}{c|}{$np^2$} & \\\cline{2-4}
		& $\text{chol}(V\zz V)=LL\zz $ & \multicolumn{2}{c|}{{\color{red}$p^3/3$}} & \\\cline{2-4}
		& $VL^{-\top}$ & \multicolumn{2}{c|}{$2np^2+{\color{red}O(p^3)}$} & \\\cline{1-5}
		MOptQR (Gram-Schmidt) & \multicolumn{3}{c|}{$2np^2$} & \multirow{1}{*}{\color{red}$2np^2+O(np)$}\\\hline
		
		\multicolumn{5}{c}{\bf in total} \\\hline
		PLAM & \multicolumn{4}{c}{$7np^2+O(np)$} \\\hline
		PCAL & \multicolumn{4}{c}{$7np^2+O(np)$} \\\hline
		MOptQR & \multicolumn{4}{c}{$7np^2+{\color{red}O(p^3)}+O(np)$ for cholesky, $4np^2+{\color{red}2np^2+O(np)}$ for Gram-Schmidt} \\\hline
		
	\end{tabular}
	\caption{The comparison of computational cost\label{tab:cost}} 
\end{table}

Here, those terms in red represent the corresponding operations that cannot be parallelized.

In practice, we calculate $X\varPsi(\nabla f(X)\zz X)$ instead of $X({\nabla f(X)}\zz X)$ for KKT evaluation since they are very close to each other around any first-order stationary point. Consequently, it 
saves $2np^2$ flops computational cost.

\section{Convergence of PLAM}

In this section, we focus on the theoretical analyses of our proposed PLAM.
The global convergence, worst-case complexity and Q-linear local convergence rate 
will be established under different mild assumptions.

\subsection{Global Convergence of PLAM}
Besides blanket Assumption \ref{a1}, 
to prove the convergence of Algorithm \ref{alg:PLAM},
we need to impose a mild condition on the initial
guess, and restrictive conditions on $\beta$ and $\eta^k$.  
To facilitate the narrative, we first state all these conditions here.
\begin{assumption}\label{a2}
	For a given $X^0$, we say it is a qualified initial guess, if 
	there exists $\us\in(0,1)$ so that 
	\begin{eqnarray*}
		\sigma_{\min}(X^0)\geq \us,\qquad 0<||{X^0}\zz X^0 -I_p||\ff \leq 1-\us^2.
	\end{eqnarray*}
\end{assumption}

Assumption \ref{a2} is not restrictive at all. Therefore two types
of points satisfying this assumption and can be obtained easily:
\begin{itemize}
	\item[(i)] $X^0=Q\Sigma$, where $Q\in\stiefel$, 
	$\Sigma=\Diag(\underbrace{1,...,1}_{p-1},\sqrt{1-\us^2})$;
	\item[(ii)] $X^0\notin\stiefel$ satisfying $\sigma^2_{\min}(X^0)>1-\frac{1}{\sqrt{p}}$ and 
	$\sigma^2_{\max}(X^0)<1+\frac{1}{\sqrt{p}}$.
\end{itemize}

Now, we list all the special notations to be used in this section.
\begin{eqnarray}\label{eq:notation}
\begin{array}{ccl}
		&& R = ||{X^0}\zz X^0-I_p||\ff; \quad 
	\cC =\{X\mid  ||X\zz X-I_p||\ff \leq R\};\quad  \underline{f}=  \min\limits_{X\in\cC} f(X);\\
	&& M = \max\limits_{X\in\cC} ||X||_2; \quad N = \max\limits_{X\in\cC} ||\nabla f(X)||\ff;\quad 
	L = \max\limits_{X\in\cC} ||\nabla^2 f(X)||_2.
\end{array}
\end{eqnarray}
We introduce the following merit function 
\begin{eqnarray}\label{eq:merit}
h(X) = f(X) - \frac{1}{2}\left\langle \varPsi(\nabla f(X)\zz X), 
X\zz X-I_p\right\rangle + \frac{\beta}{4} ||X\zz X-I_p||\fs.
\end{eqnarray}
According to the twice continuous differentiability of $f(X)$, $\nabla f(X)$ is Lipschitz continuous
on the compact set $\cC$. Namely, there exists constant $L_h>0$, related to $\beta$, so that
\begin{eqnarray}\label{eq:Lip}
||\nabla h(X)-\nabla h(y)||\ff\leq L_h||X-Y||\ff, \quad\forall\,X,Y\in\cC.
\end{eqnarray}

The algorithm parameters $\beta$ and $\eta^k$, and the constants used in the proof can be selected
by the following rules.
\begin{assumption}\label{a3}
\begin{eqnarray}\label{eq:c1beta}
	&& c_1\in\left(0,\frac{1}{2}\right);\quad \beta > \max \left\{
	\frac{MN}{\us^2}+\sqrt{\frac{M^2N^2}{\us^4}+\frac{(N+LM)^2}{4\us^2(1-2c_1)}}, \frac{MN}{\us}, \frac{4MN}{\us^2}
	\right\};\\
	\label{eq:c2eta}
	&& c_2\in\left(0,
	\frac{R^2(\beta\us^2 - 4MN)}{2N_L^2}\right];\quad  \eta^k \in  \left[\underline{\eta},  \bar{\eta}\right],\\ 
	&&\mbox{where}~\,  
	\underline{\eta} = \max \left\{
	\frac{L_h}{2c_1},\frac{2N_L M + N_L\sqrt{4M^2+2R}}{R},\frac{R+2M^2}{c_2}
	\right\},\nonumber\\
	&& N_L = (1+M^2)N+\beta RM,\quad 
	\bar{\eta}\geq \underline{\eta}.
	\nonumber
\end{eqnarray}
\end{assumption}

Now we give \revise{a} sketch of our proof. Suppose $\{X^k\}$ is the iterate sequence 
generated by Algorithm \ref{alg:PLAM}. The main steps of the proof include:
\begin{itemize}
	\item[(1)] Any iterate $X^k$ is in $\cC$,
	and $\us$ is a unified lower bound of the smallest singular values of the iterates $X^k$;
	\item[(2)] The merit function $h(X)$ is bounded below;
	\item[(3)] $\{h(X^k)\}$ monotonically decreases, and hence $\{X^k\}$ has at least one convergent subsequence;
	\item[(4)] Any cluster point of $\{X^k\}$, say $X^*$, is a first-order stationary point of
	%\remove{minimizing}
	 the augmented Lagrangian function \eqref{eq:Lmin} with $\Lambda^*= 
	\varPsi(\nabla f(X^*)\zz X^*)$;
	\item[(5)] Any cluster point of $\{X^k\}$, say $X^*$, is a first-order stationary point of
	the original optimization problem with orthogonality constraints  \eqref{prob}.
\end{itemize}

Next we provide five concrete lemmas or corollaries following the above-mentioned sketch.

\begin{lemma}\label{lm:Xbound}
	Suppose $\{X^k\}$ is the iterate sequence 
	generated by Algorithm \ref{alg:PLAM} initiated from $X^0$ satisfying Assumption \ref{a2},
	and the problem parameters satisfy Assumption \ref{a3}.
	Then it holds that
	\begin{eqnarray}\label{eq:bound}
	\sigma_{\min}(X^k) \geq \us,\quad X^k\in\cC.
	\end{eqnarray}
\end{lemma}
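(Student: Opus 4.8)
The plan is to prove the two inequalities in \eqref{eq:bound} by a single induction on $k$, after observing that the singular–value bound is in fact a consequence of membership in $\cC$. Indeed, if $X\in\cC$ then $\|X\zz X-I_p\|_2\le\|X\zz X-I_p\|\ff\le R\le 1-\us^2$ by Assumption \ref{a2}, so every eigenvalue of $X\zz X$ lies in $[\us^2,\,2-\us^2]$ and hence $\sigma_{\min}(X)\ge\us$. It therefore suffices to show $X^k\in\cC$ for all $k$, i.e. that the feasibility residual $C^k:=X^k\zz X^k-I_p$ satisfies $\|C^k\|\ff\le R$. The base case is immediate, since $\|C^0\|\ff=R$ by the definition of $R$ in \eqref{eq:notation}, and it yields $\sigma_{\min}(X^0)\ge\us$ as well.

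For the inductive step I would assume $X^k\in\cC$ and analyze the single gradient step $X^{k+1}=X^k-\tfrac{1}{\eta^k}G^k$ with $G^k:=\nabla_X\cLb(X^k,\Lambda^k)$. First I bound the gradient: writing $G^k$ in the form of \eqref{eq:PLAM-main} and using $\|X^k\|_2\le M$, $\|\nabla f(X^k)\|\ff\le N$, $\|C^k\|\ff\le R$ (all valid because $X^k\in\cC$) together with $\|\varPsi(\cdot)\|\ff\le\|\cdot\|\ff$, one gets $\|G^k\|\ff\le (1+M^2)N+\beta RM=N_L$. Next I expand
\[
C^{k+1}=C^k-\tfrac{1}{\eta^k}\big(X^k\zz G^k+{G^k}\zz X^k\big)+\tfrac{1}{(\eta^k)^2}{G^k}\zz G^k .
\]
The crucial structural point is that, because $\Lambda^k$ is chosen by \eqref{eq:Lambda1}, the symmetric part $\varPsi(X^k\zz G^k)$ simplifies: the $\nabla f$–contributions cancel against $\Lambda^k=\varPsi(\nabla f(X^k)\zz X^k)$ and the dominant surviving term is the penalty term $\beta C^k$, the remainder being $-\tfrac12\big(C^k\varPsi(A)+\varPsi(A)C^k\big)+\beta(C^k)^2$ with $A=\nabla f(X^k)\zz X^k$ and $\|A\|_2\le MN$.

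Combining these estimates (a short computation using $M\ge 1$ shows $\underline{\eta}\ge 2\beta$, which legitimizes replacing $|1-2\beta/\eta^k|$ by $1-2\beta/\eta^k$) yields a scalar recursion of the form
\[
\|C^{k+1}\|\ff\ \le\ \Big(1-\tfrac{2\beta}{\eta^k}\Big)t+\tfrac{2MN}{\eta^k}\,t+\tfrac{2\beta}{\eta^k}\,t^2+\tfrac{N_L^2}{(\eta^k)^2},\qquad t:=\|C^k\|\ff\in[0,R].
\]
Since $1-t\ge 1-R\ge\us^2$ on $\cC$, the first and third terms combine as $-\tfrac{2\beta}{\eta^k}t(1-t)\le-\tfrac{2\beta\us^2}{\eta^k}t$, so the right–hand side is dominated by the affine map $\phi(t):=t\big(1-\tfrac{2}{\eta^k}(\beta\us^2-MN)\big)+\tfrac{N_L^2}{(\eta^k)^2}$, whose maximum on $[0,R]$ occurs at an endpoint. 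The bound $\phi(0)\le R$ reduces to $N_L^2/(\eta^k)^2\le R$, guaranteed by the second term in $\underline{\eta}$; the bound $\phi(R)\le R$ reduces to $\eta^k\ge N_L^2/\big(2R(\beta\us^2-MN)\big)$, guaranteed by $c_2\le R^2(\beta\us^2-4MN)/(2N_L^2)$ together with $\underline{\eta}\ge(R+2M^2)/c_2$ and the threshold $\beta>4MN/\us^2$ (which makes $\beta\us^2-MN\ge\beta\us^2-4MN>0$). Hence $\|C^{k+1}\|\ff\le R$, closing the induction; the singular–value bound for $X^{k+1}$ then follows from the reduction in the first paragraph.

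I expect the main obstacle to be precisely this constant bookkeeping: verifying that the single contraction $-\tfrac{2\beta}{\eta^k}t(1-t)$ produced by the penalty term dominates both the linearization cross–term $\tfrac{2MN}{\eta^k}t$ and the quadratic displacement error $N_L^2/(\eta^k)^2$ uniformly over $t\in[0,R]$, and that the thresholds on $\beta$ and $\eta^k$ collected in Assumption \ref{a3} are simultaneously strong enough to enforce this while remaining mutually consistent (in particular that $\underline{\eta}\ge 2\beta$, which is what justifies dropping the absolute value). The algebraic identity for $\varPsi(X^k\zz G^k)$ enabled by the multiplier formula \eqref{eq:Lambda1} is the conceptual heart of the argument; everything after it is careful estimation of lower–order terms.
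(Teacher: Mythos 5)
Your proof is correct, and the constant bookkeeping does go through: the cancellation identity $\varPsi({X^k}\zz G^k)=\beta C^k+\beta (C^k)^2-\tfrac{1}{2}\left(C^k\varPsi(A)+\varPsi(A)C^k\right)$ is right; the claim $\underline{\eta}\ge 2\beta$ follows from $\underline{\eta}\ge 2N_LM/R\ge 2\beta M^2$ together with $M\ge 1$ (which holds because $\stiefel\subset\cC$); and both endpoint checks $\phi(0)\le R$ and $\phi(R)\le R$ are indeed implied by the second and third terms in the definition of $\underline{\eta}$, exactly as you argue. However, your route differs from the paper's in execution. The paper also proceeds by induction, also uses the gradient bound $N_L$, and also closes with the reduction $R\le 1-\us^2\Rightarrow\sigma_{\min}(X^{k+1})\ge\us$, but instead of one unified recursion it splits into two cases: when $\|C^k\|\ff\le R/2$ it uses the crude triangle-inequality bound $\|C^{k+1}\|\ff\le\|C^k\|\ff+\tfrac{2MN_L}{\eta^k}+\tfrac{N_L^2}{(\eta^k)^2}\le R$ (this is where the second term of $\underline{\eta}$ enters), and when $\|C^k\|\ff> R/2$ it shows that the squared-feasibility merit $c(X)=\tfrac{1}{2}\|X\zz X-I_p\|\fs$ strictly decreases along the step, by proving $\langle\nabla_X\cLb(X^k,\Lambda^k),\nabla c(X^k)\rangle\ge c_2\|\nabla_X\cLb(X^k,\Lambda^k)\|\fs$ and then Taylor-expanding $c$ with Lipschitz constant $L_c=2R+4M^2$ (this is where $c_2$ and the third term of $\underline{\eta}$, which equals $L_c/(2c_2)$, enter). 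The structural cancellation you isolate in matrix form is the very same one the paper exploits in trace form, namely $\langle\nabla f(X^k)-X^k\Lambda^k,\, X^kC^k\rangle=-\tr((C^k)^2\Lambda^k)$, so the conceptual heart is shared. What your version buys is a cleaner, case-free argument that makes transparent exactly where each constant of Assumption \ref{a3} is consumed, and shows there is slack in those constants; what the paper's version buys is the extra qualitative fact that the feasibility violation is monotonically decreasing whenever it exceeds $R/2$, a statement your affine majorization of $\|C^{k+1}\|\ff$ does not by itself provide.
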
 
\begin{proof}
	We use mathematical induction. The argument \eqref{eq:bound} directly holds for $X^0$ resulting 
	from Assumption \ref{a2}. Next we investigate whether \eqref{eq:bound}
	holds at $X^{k+1}$ provided that it holds for $X^k$.
	
	Case I, $||{X^k}\zz X^k - I_p||\ff \leq \frac{R}{2}$.
	We have
	\begin{eqnarray*}
	&&||{X^{k+1}}\zz X^{k+1} - I_p||\ff\\
	&=& \left\|\left(X^k -\frac{1}{\eta^k}\nabla_X \cLb(X^k,\Lambda^k)\right)\zz
	\left(X^k -\frac{1}{\eta^k}\nabla_X \cLb(X^k,\Lambda^k)\right) - I_p \right\|\ff\\
	&\leq& ||{X^{k}}\zz X^{k} - I_p||\ff + \frac{2}{\eta^k}
	||X^k||_2 ||\nabla_X \cLb(X^k,\Lambda^k)||\ff   + \frac{1}{(\eta^k)^2}
	||\nabla_X \cLb(X^k,\Lambda^k)||\fs.
	\end{eqnarray*}
	It is not difficult to verify that %$||\nabla_X \cLb(X^k,\Lambda^k)||\ff\leq N_L$
	\begin{eqnarray*}
		||\nabla_X \cLb(X^k,\Lambda^k)||\ff = %&=& 
		\norm{\nabla f(X^k) -  X^k \varPsi(\nabla f(X^k)\zz X^k)
			+ \beta X^k({X^k}\zz X^k - I_p)}\ff  \leq %\\
		%&\leq& 
		(1+M^2)N+\beta RM = N_L
	\end{eqnarray*}
	holds for any $X^k\in\cC$.
	By using the facts $X^k\in\cC$, \eqref{eq:notation} and \eqref{eq:c2eta}, we have
	\begin{eqnarray*}
		\frac{2}{\eta^k}
		||X^k||_2 ||\nabla_X \cLb(X^k,\Lambda^k)||\ff   + \frac{1}{(\eta^k)^2}
		||\nabla_X \cLb(X^k,\Lambda^k)||\fs \leq \frac{R}{2},
	\end{eqnarray*}
	which implies $||{X^{k+1}}\zz X^{k+1} - I_p||\ff\leq R$. \comm{This shows \eqref{eq:bound} is true for $k+1$.}
	
	Case II, $||{X^k}\zz X^k - I_p||\ff > \frac{R}{2}$. For convenience, we denote
	$c(X) = \frac{1}{2}||X\zz X-I_p||\fs$, 
	\begin{eqnarray}\label{eq:notation2}
		d = \nabla f(X^k) - X^k \Lambda^k,\quad C = {X^k}\zz X^k - I_p,\quad \delta
		= X^kC.
	\end{eqnarray}
	According to the facts $\sigma_{\min}(X^k) \geq \us$ and $X^k\in\cC$, we have
	\begin{eqnarray}\label{eq:notation3}
%	\norm{d}\ff \leq (1+M^2)N,\quad 
	\norm{\delta}\ff > \frac{R\us}{2}.
	\end{eqnarray}
	By using the fact that $\tr(AB)=\tr(AB\zz)$ if $A$ is symmetric, we have
	$$\tr(C{X^k}\zz\nabla f(X^k)) = \tr(C\nabla f(X^k)\zz X^k)=\tr(C\Lambda^k).$$
	Hence, we have 
	\begin{eqnarray}\label{eq:innerprod}
		\langle d, \delta\rangle  &=& \tr(C{X^k}\zz \nabla f(X^k) -C{X^k}\zz X^k \Lambda^k )\nonumber\\
		&=& \tr(C{X^k}\zz\nabla f(X^k) -C(C+I_p)\Lambda^k) = -\tr(C^2\Lambda^k).
	\end{eqnarray}
	
	 Notice that $L_c = 2R+4M^2$ is the Lipschitz constant 
	 of $\nabla c(X)$ over $\cC$. Due to the fact \eqref{eq:c2eta}, \eqref{eq:notation3} and \eqref{eq:innerprod}, we have
	 \begin{eqnarray*}
	 &&	\tr\dkh{\nabla_X \cLb(X^k,\Lambda^k)\zz \nabla c(X^k)} - c_2 ||\nabla_X \cLb(X^k,\Lambda^k)||\fs\\
	 &\geq&  2\langle d+\beta\delta, \delta\rangle - c_2 N_L^2 = 2\beta\norm{\delta}\fs + 2\langle d, \delta\rangle - c_2 N_L^2\\
	 &>&  \frac{\beta R^2\us^2}{2}-2\norm{C}\fs\cdot\tr(\Lambda^k) - c_2 N_L^2\\
	 &\geq& \frac{\beta R^2\us^2}{2}-2R^2 MN - c_2 N_L^2 \geq 0.
	 \end{eqnarray*}
	
	 According to the Taylor expansion, we have
	 \begin{eqnarray*}
	 	&& c(X^{k+1}) = c\left(X^k-\frac{1}{\eta^k}\nabla_X \cLb(X^k, \Lambda^k)\right)\\
	 	&\leq& c(X^k) -\frac{1}{\eta^k}\left\langle \nabla_X \cLb(X^k, \Lambda^k), \nabla c(X^k)\right\rangle
	 	+ \frac{L_c}{2(\eta^k)^2} ||\nabla_X \cLb(X^k, \Lambda^k)||\fs\\
	 	& <&  c(X^k) - \left(
	 	\frac{c_2}{\bar{\eta}} -\frac{L_c}{2\bar{\eta}^2}
	 	\right)\cdot||\nabla_X \cLb(X^k, \Lambda^k)||\fs
	 	\leq c(X^k).
	 \end{eqnarray*}
     
     According to assumption, $R\leq 1- \us^2$, we can easily obtain that $\sigma_{\min}(X^{k+1})\geq \us$.
     This completes the proof.
\end{proof}

\begin{lemma}\label{lm:hbound}
	$h(X)$ defined by \eqref{eq:merit} is bounded below at $\cC$.
\end{lemma}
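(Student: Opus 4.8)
The plan is to bound each of the three terms in \eqref{eq:merit} separately over $\cC$, exploiting that $\cC$ is compact so that the constants $\underline{f}$, $M$, $N$ introduced in \eqref{eq:notation} are finite and attained. First I would verify the compactness of $\cC=\{X\mid ||X\zz X-I_p||\ff\leq R\}$: it is closed as the preimage of a closed set under the continuous map $X\mapsto X\zz X-I_p$, and it is bounded because $||X\zz X-I_p||\ff\leq R$ forces $||X||_2^2=\lambda_{\max}(X\zz X)\leq 1+R$. By the continuity of $f$ and $\nabla f$, the minimum $\underline{f}=\min_{X\in\cC}f(X)$ and the maxima $M=\max_{X\in\cC}||X||_2$, $N=\max_{X\in\cC}||\nabla f(X)||\ff$ are therefore all finite.

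Next I would split $h$ according to \eqref{eq:merit} and treat its three summands in turn. For the first term, $f(X)\geq\underline{f}$ holds for every $X\in\cC$ by definition of $\underline{f}$; the last term $\tfrac{\beta}{4}||X\zz X-I_p||\fs$ is nonnegative; hence only the cross term $-\tfrac12\langle\varPsi(\nabla f(X)\zz X),X\zz X-I_p\rangle$ requires a lower estimate. Here I would apply Cauchy--Schwarz together with submultiplicativity, using that $\varPsi(A)=\tfrac12(A+A\zz)$ is an averaging operation and hence $||\varPsi(A)||\ff\leq||A||\ff$. This yields $||\varPsi(\nabla f(X)\zz X)||\ff\leq||\nabla f(X)\zz X||\ff\leq||\nabla f(X)||\ff\,||X||_2\leq NM$ on $\cC$, which combined with $||X\zz X-I_p||\ff\leq R$ gives $|\langle\varPsi(\nabla f(X)\zz X),X\zz X-I_p\rangle|\leq NMR$. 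Assembling the three bounds produces $h(X)\geq\underline{f}-\tfrac{NMR}{2}$ for all $X\in\cC$, a finite constant independent of $X$, which is exactly the desired lower bound.

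The argument is a short chain of elementary norm estimates, so I do not expect any serious obstacle. The only point that genuinely warrants care is establishing the compactness of $\cC$ at the outset, since the whole reduction rests on $\underline{f}$, $M$, and $N$ being well defined; once that is in place, the remaining inequalities are routine and the lower bound follows immediately.
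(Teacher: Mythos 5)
Your proposal is correct, but it takes a more explicit route than the paper. The paper's own proof is a one-line appeal to the Weierstrass theorem: $h$ is continuous and $\cC$ is compact, hence $h$ attains a minimum on $\cC$ and is bounded below there (the proof is omitted for this reason). You instead verify compactness of $\cC$ by hand (closedness as a preimage, boundedness via $\|X\|_2^2 \leq 1+R$), invoke compactness only to make the constants $\underline{f}$, $M$, $N$ of \eqref{eq:notation} finite, and then estimate the three terms of \eqref{eq:merit} separately, using $\|\varPsi(A)\|\ff \leq \|A\|\ff$, $\|\nabla f(X)\zz X\|\ff \leq \|\nabla f(X)\|\ff\|X\|_2 \leq NM$, and $\beta \geq 0$, to obtain the explicit bound $h(X) \geq \underline{f} - \tfrac{1}{2}MNR$ on $\cC$. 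What your version buys is precisely this quantitative constant: the paper later uses the very same estimate $\min_{X\in\cC} h(X) \geq \underline{f} - \tfrac{1}{2}MNR$ in \eqref{eq:tmp2} inside the proof of Theorem \ref{thm:PLAM} without rederiving it, so your argument fills in that step as well; it also makes explicit the compactness of $\cC$, which the paper takes for granted. The paper's approach is shorter and needs no norm manipulations, but yields no usable constant.
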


This lemma immediately holds by the continuous
differentiability of $h(X)$ and the compactness of $\cC$, and hence, the proof is omitted.

\begin{lemma}\label{lm:hdecrease}
	Suppose $\{X^k\}$ is the iterate sequence 
	generated by Algorithm \ref{alg:PLAM} initiated from $X^0$ satisfying Assumption \ref{a2},
	the problem parameters satisfy Assumption \ref{a3},
	and $h(X)$ is defined by \eqref{eq:merit}.
	Then it holds that
	\begin{eqnarray}\label{eq:suffred}
		h(X^k) - h(X^{k+1}) \geq  c_3 ||\nabla_X \cLb(X^k,\Lambda^k)||\fs,
	\end{eqnarray}
	where $c_3=
	\frac{c_1}{\bar{\eta}} - \frac{L_h}{2\bar{\eta}^2}>0
	$.
\end{lemma}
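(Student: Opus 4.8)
The plan is to read the merit function \eqref{eq:merit} as the augmented Lagrangian evaluated along the closed-form multiplier, $h(X)=\cLb(X,\Lambda(X))$ with $\Lambda(X):=\varPsi(\nabla f(X)\zz X)$, and to run the classical descent lemma for $h$ along the PLAM step. By Lemma \ref{lm:Xbound} both $X^k$ and $X^{k+1}$ lie in the compact set $\cC$, so the Lipschitz bound \eqref{eq:Lip} applies; writing $G^k:=\nabla_X\cLb(X^k,\Lambda^k)$ and using $X^{k+1}-X^k=-\frac{1}{\eta^k}G^k$ from \eqref{eq:PLAM-main},
\begin{eqnarray*}
h(X^{k+1})\le h(X^k)-\frac{1}{\eta^k}\left\langle\nabla h(X^k),G^k\right\rangle+\frac{L_h}{2(\eta^k)^2}\|G^k\|\fs .
\end{eqnarray*}
Everything then reduces to showing $\langle\nabla h(X^k),G^k\rangle\ge c_1\|G^k\|\fs$.

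First I would compute $\nabla h$ explicitly. Since $X\zz X-I_p$ is symmetric we may drop the $\varPsi$ in \eqref{eq:merit}, and differentiating term by term the only subtlety is that $\Lambda(X)$ itself depends on $X$ through $\nabla f$. Carrying this out, the contribution of the cross term splits into a part that exactly reproduces $-X\Lambda(X)$ and a remainder, so that
\begin{eqnarray*}
\nabla h(X)=\underbrace{\nabla f(X)-X\varPsi(\nabla f(X)\zz X)+\beta X(X\zz X-I_p)}_{=\,\nabla_X\cLb(X,\Lambda(X))}+E(X),
\end{eqnarray*}
where the error $E(X)=-\frac12\nabla f(X)(X\zz X-I_p)-\frac12\nabla^2 f(X)[X(X\zz X-I_p)]$ is \emph{proportional to the feasibility violation} $X\zz X-I_p$ (the second piece is exactly why twice differentiability of $f$ is needed here). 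In particular $\nabla h(X^k)=G^k+E(X^k)$.

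The main obstacle is to absorb $E(X^k)$. On $\cC$ the constants in \eqref{eq:notation} give $\|E(X^k)\|\ff\le\frac12(N+LM)\,\|{X^k}\zz X^k-I_p\|\ff$. The key is that the penalty forces the feasibility violation to be controlled by $\|G^k\|\ff$: exactly as in the proof of Lemma \ref{lm:2}, the identity \eqref{eq:S3} together with the singular-value estimate \eqref{eq:S5} and $\sigma_{\min}(X^k)\ge\us$ yields a bound of the form $\|{X^k}\zz X^k-I_p\|\ff\le (M/\delta)\|G^k\|\ff$ for a positive $\delta$ tied to $\beta$ and $\us$. Combining this with Cauchy--Schwarz gives $\langle E(X^k),G^k\rangle\ge-\frac{M(N+LM)}{2\delta}\|G^k\|\fs$, whence $\langle\nabla h(X^k),G^k\rangle=\|G^k\|\fs+\langle E(X^k),G^k\rangle\ge c_1\|G^k\|\fs$ once $\beta$ is large enough. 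The lower bound on $\beta$ in \eqref{eq:c1beta} is precisely calibrated for this: its quadratic-in-$\beta$ form, with the factors $\us$ and $1-2c_1$, reflects the tension that $G^k$ must be large enough to dominate the feasibility violation yet not so dominated by $E(X^k)$ that the leading $\|G^k\|\fs$ term is lost. This step is where essentially all the bookkeeping lives.

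Finally, substituting $\langle\nabla h(X^k),G^k\rangle\ge c_1\|G^k\|\fs$ into the descent inequality yields
\begin{eqnarray*}
h(X^k)-h(X^{k+1})\ge\left(\frac{c_1}{\eta^k}-\frac{L_h}{2(\eta^k)^2}\right)\|G^k\|\fs .
\end{eqnarray*}
It remains to bound the bracket below uniformly over the admissible range $\eta^k\in[\underline\eta,\bar\eta]$. Because $\underline\eta\ge L_h/(2c_1)$ in Assumption \ref{a3}, the factor $c_1-\frac{L_h}{2\eta^k}$ is nonnegative for every admissible $\eta^k$, so the bracket is nonnegative; minimizing it over $[\underline\eta,\bar\eta]$ produces the positive constant $c_3=\frac{c_1}{\bar\eta}-\frac{L_h}{2\bar\eta^2}$ recorded in the statement, which establishes \eqref{eq:suffred}.
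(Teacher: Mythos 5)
Your overall architecture coincides with the paper's: both proofs decompose $\nabla h(X^k)=G^k+E(X^k)$ with the same error term (your formula for $E$ is in fact the more careful one), bound $\|E(X^k)\|\ff\le\tfrac{N+LM}{2}\|C^k\|\ff$ where $C^k={X^k}\zz X^k-I_p$, establish the angle condition $\langle\nabla h(X^k),G^k\rangle\ge c_1\|G^k\|\fs$, and finish with the descent lemma and Lemma \ref{lm:Xbound}. The gap is in how you absorb $E$. The paper never invokes Lemma \ref{lm:2} here; it proves directly that $\|G^k\|\fs\ge\frac{1}{1-2c_1}\|E(X^k)\|\fs$ by expanding $\|G^k\|\fs=\|d+\beta\delta\|\fs\ge 2\beta\langle d,\delta\rangle+\beta^2\|\delta\|\fs$ with $d=\nabla f(X^k)-X^k\Lambda^k$ and $\delta=X^kC^k$, using the identity $\langle d,\delta\rangle=-\tr\bigl((C^k)^2\Lambda^k\bigr)\ge-MN\|C^k\|\fs$ together with $\|\delta\|\fs\ge\us^2\|C^k\|\fs$. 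The quadratic-in-$\beta$ expression in \eqref{eq:c1beta} is exactly the positive root of $\beta^2\us^2-2\beta MN-\frac{(N+LM)^2}{4(1-2c_1)}\ge 0$, i.e.\ it is calibrated to this computation, not to yours.

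Your route needs two things simultaneously: applicability of Lemma \ref{lm:2} with parameter $\delta$ (which forces $\delta\le\beta\us^2-MN$) and the absorption inequality $\frac{M(N+LM)}{2\delta}\le 1-c_1$; together these require $\beta\ge\frac{MN}{\us^2}+\frac{M(N+LM)}{2(1-c_1)\us^2}$, and this is \emph{not} implied by Assumption \ref{a3}. For instance, in the regime where $N$ is small compared with $LM$ (small gradient, large Hessian on $\cC$) and $c_1$ is small, the binding term of \eqref{eq:c1beta} is roughly $\frac{N+LM}{2\us\sqrt{1-2c_1}}$, while your requirement is roughly $\frac{M(N+LM)}{2(1-c_1)\us^2}$, larger by a factor of about $M/\us>1$ (the terms $\frac{MN}{\us}$ and $\frac{4MN}{\us^2}$ are negligible there). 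So there are parameters satisfying Assumption \ref{a3} for which your chain of inequalities fails to deliver the constant $c_1$; the assertion that \eqref{eq:c1beta} is ``precisely calibrated'' for your absorption step — the step where, as you yourself note, all the bookkeeping lives — is exactly the unproven, and in general false, claim your proof rests on. The repair is to replace the appeal to Lemma \ref{lm:2} by the paper's direct expansion of $\|d+\beta\delta\|\fs$. (Your final minimization of $\frac{c_1}{\eta}-\frac{L_h}{2\eta^2}$ over $[\underline{\eta},\bar{\eta}]$, claiming the minimum sits at $\bar{\eta}$, mirrors the paper's own step, so I do not count it against you.)
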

\begin{proof}
	Firstly,  we notice that
	\begin{eqnarray*}
		\nabla h(X) = \nabla_X \cLb(X,\varPsi(\nabla f(X)\zz X))
		- \frac{1}{2} (\nabla^2 f(X)[X] +\nabla f(X)) (X\zz X-I_p).
	\end{eqnarray*}
	We keep using the notations \eqref{eq:notation2} and investigate 
	\begin{eqnarray*}
		&&||\nabla_X \cLb(X^k,\Lambda^k)||\fs - 
		\frac{1}{1-2c_1} ||\nabla h(X^k) - \nabla_X \cLb(X^k,\Lambda^k)||\fs \\
		&\geq & 
		||d+\beta \delta||\fs - \frac{(N+LM)^2}{4(1-2c_1)}||C||\fs
		\,\geq\,  2\beta \langle d, \delta\rangle + \beta^2||\delta||\fs -
		\frac{(N+LM)^2}{4(1-2c_1)}||C||\fs\\
		&\geq & - \beta\norm{C}\fs\cdot\tr(\Lambda^k) + \left(
		 \beta^2\us^2 - \frac{(N+LM)^2}{4(1-2c_1)}
		\right)\cdot||C||\fs\\
		&\geq & 
		- 2\beta MN ||C||\fs + \left(
		\beta^2\us^2 - \frac{(N+LM)^2}{4(1-2c_1)}
		\right)\cdot||C||\fs \geq 0,
	\end{eqnarray*}
    where the second last inequality is implied by \revise{relation} \eqref{eq:innerprod}.
    Hence, we arrive at
    \begin{eqnarray}\label{eq:angle}
    	\langle  \nabla_X \cLb(X^k,\Lambda^k),\nabla h(X^k)\rangle
    	\geq c_1 ||\nabla_X \cLb(X^k,\Lambda^k)||\fs.
    \end{eqnarray}
	Substituting \eqref{eq:c2eta} and \eqref{eq:angle} into the Taylor expansion, we have
	\begin{eqnarray*}
	    &&h(X^{k+1}) = h\left(X^k -\frac{1}{\eta^k}\nabla_X \cLb(X^k,\Lambda^k)\right)\\
	   &\leq& h(X^k) -\frac{1}{\eta^k}\left\langle
	   \nabla_X \nabla h(X^k), \cLb(X^k,\Lambda^k)\right\rangle
	   +\frac{L_h}{2(\eta^k)^2}||\nabla_X \cLb(X^k,\Lambda^k)||\fs\\
	   &\leq & h(X^k) -\left(\frac{c_1}{\bar{\eta}} - \frac{L_h}{2\bar{\eta}^2}\right)\cdot
	   ||\nabla_X \cLb(X^k,\Lambda^k)||\fs.
	\end{eqnarray*}
 	We complete the proof.
\end{proof}

With the boundedness of $h(X)$ at $\cC$, Lemma \ref{lm:hdecrease} immediately implies
the convergent of $\{h(X^k)\}$. More precisely, we have the following corollary.

\begin{corollary}\label{cl:glc}
	Suppose $\{X^k\}$ is the iterate sequence 
	generated by Algorithm \ref{alg:PLAM} initiated from $X^0$ satisfying Assumption \ref{a2},
	and the problem parameters satisfy Assumption \ref{a3}.
	Then the algorithm is finitely terminated at $k$-th iteration with 
	$\nabla_X \cLb(X^k,\Lambda^k)=0$, 
	or
	\begin{eqnarray*}
		\lim\limits_{k\rightarrow +\infty}\nabla_X \cLb(X^k,\Lambda^k)=0.
	\end{eqnarray*} 
	Moreover, $\{X^k\}$ has at least one convergent subsequence.
	Any cluster point of $\{X^k\}$, $X^*$, is a first-order stationary point of 
    minimizing the augmented Lagrangian function \eqref{eq:Lmin} with $\Lambda^*= \varPsi(\nabla f(X^*)\zz X^*)$.
\end{corollary}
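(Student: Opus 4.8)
The plan is to combine the three preceding lemmas, which together supply exactly the ingredients of a standard descent-method convergence argument, and then to promote subsequential gradient convergence to stationarity of a cluster point by a continuity argument. Concretely, Lemma \ref{lm:Xbound} confines all iterates to $\cC$, Lemma \ref{lm:hbound} bounds $h$ below there, and Lemma \ref{lm:hdecrease} gives sufficient decrease; these are assembled in three steps.

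First I would establish the gradient convergence. If Algorithm \ref{alg:PLAM} does not terminate finitely, then Lemma \ref{lm:hdecrease} gives $h(X^k)-h(X^{k+1})\geq c_3\|\nabla_X\cLb(X^k,\Lambda^k)\|\fs$ with $c_3>0$ at every iteration. Summing this telescoping inequality from $k=0$ onward and invoking the lower bound on $h$ over $\cC$ from Lemma \ref{lm:hbound} (the iterates remaining in $\cC$ by Lemma \ref{lm:Xbound}), I obtain $\sum_{k=0}^{\infty}\|\nabla_X\cLb(X^k,\Lambda^k)\|\fs\leq (h(X^0)-\inf_{\cC}h)/c_3<\infty$. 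Hence the summand tends to zero, i.e. $\nabla_X\cLb(X^k,\Lambda^k)\to 0$. The alternative case, finite termination at some $k$ with $\nabla_X\cLb(X^k,\Lambda^k)=0$, is immediate.

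Next, for the existence of a convergent subsequence, I would note that $\cC=\{X\mid \|X\zz X-I_p\|\ff\leq R\}$ is compact: it is closed as the preimage of a closed set under a continuous map, and bounded because the constraint forces each squared singular value of $X$ to lie in $[0,1+R]$. Since every iterate lies in $\cC$ by Lemma \ref{lm:Xbound}, the Bolzano--Weierstrass theorem yields at least one cluster point $X^*\in\cC$. Finally, to show that any such $X^*$ is a first-order stationary point of \eqref{eq:Lmin}, the key observation is that, after substituting $\Lambda^k=\varPsi(\nabla f(X^k)\zz X^k)$, the gradient in \eqref{eq:PLAM-main} becomes a single continuous function of the iterate alone, namely $G(X):=\nabla f(X)-X\varPsi(\nabla f(X)\zz X)+\beta X(X\zz X-I_p)$, so that $\nabla_X\cLb(X^k,\Lambda^k)=G(X^k)$ and $\nabla_X\cLb(X^*,\Lambda^*)=G(X^*)$ with $\Lambda^*=\varPsi(\nabla f(X^*)\zz X^*)$. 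Passing to a subsequence $X^{k_j}\to X^*$ and using continuity of $G$ (guaranteed by continuous differentiability of $f$), I get $G(X^{k_j})\to G(X^*)$; but $G(X^{k_j})\to 0$ by the first step, so $G(X^*)=0$, which is precisely $\nabla_X\cLb(X^*,\Lambda^*)=0$.

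The argument is essentially routine given the three lemmas; the one place that requires care --- and which I regard as the main obstacle --- is recognizing that the multiplier $\Lambda^k$ is itself a function of $X^k$, so the limit must be taken of the composite continuous map $G$ rather than of $\nabla_X\cLb(\cdot,\Lambda)$ at a frozen $\Lambda$. This coupling is exactly why the closed-form multiplier rule \eqref{eq:multi} forces the cluster point to be a stationary point of the augmented Lagrangian with the correctly matched multiplier $\Lambda^*=\varPsi(\nabla f(X^*)\zz X^*)$.
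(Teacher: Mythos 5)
Your proof is correct and follows exactly the route the paper intends: the paper's own "proof" is a one-line remark that the corollary follows directly from Lemmas \ref{lm:Xbound} and \ref{lm:hdecrease} (with Lemma \ref{lm:hbound} implicitly supplying the lower bound on $h$), and your telescoping-sum, compactness, and continuity-of-$G$ argument is precisely the standard filling-in of those details, including the correct observation that $\nabla_X \cLb(X^k,\Lambda^k)$ is a single continuous function of $X^k$ once the multiplier rule \eqref{eq:Lambda1} is substituted.
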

\begin{proof}
	This is a direct corollary of Lemmas \ref{lm:Xbound} and \ref{lm:hdecrease}.
\end{proof}

Finally, we give the global convergence rate of PLAM, namely, the worst case complexity.
\begin{theorem}\label{thm:PLAM}
	Suppose $\{X^k\}$ is the iterate sequence 
	generated by Algorithm \ref{alg:PLAM} initiated from $X^0$ satisfying Assumption \ref{a2},
	and the problem parameters satisfy Assumption \ref{a3}.
	Then the sequence $\{X^k\}$ has at least one cluster point, and any which is a first-order stationary point of problem \eqref{prob}. More precisely, for any $K>1$, it holds
	\begin{eqnarray}\label{eq:rate}
		\min\limits_{k=0,...,K-1} ||\nabla_X \cLb(X^k,\Lambda^k)||\ff
		< \sqrt{\frac{f(X^0)-\underline{f}+MNR+\beta R^2/4}{c_3K}}.
	\end{eqnarray}
\end{theorem}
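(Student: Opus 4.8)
The plan is to assemble the machinery already established in Lemmas~\ref{lm:Xbound}, \ref{lm:hbound} and \ref{lm:hdecrease}, Corollary~\ref{cl:glc}, and Lemma~\ref{lm:2}. The statement splits into a qualitative claim (existence of a cluster point that is first-order stationary for \eqref{prob}) and a quantitative one (the complexity bound \eqref{eq:rate}). For the qualitative claim almost everything is already available: Lemma~\ref{lm:Xbound} confines the entire iterate sequence to the compact set $\cC$, so $\{X^k\}$ must possess at least one cluster point, and Corollary~\ref{cl:glc} guarantees that any such cluster point $X^*$ satisfies $\nabla_X\cLb(X^*,\Lambda^*)=0$ with $\Lambda^*=\varPsi(\nabla f(X^*)\zz X^*)$, i.e.\ $X^*$ is first-order stationary for the augmented Lagrangian problem \eqref{eq:Lmin}.

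It remains to upgrade this to first-order stationarity for \eqref{prob}, which is precisely what Lemma~\ref{lm:2} delivers once its hypothesis is checked at $X^*$. By Lemma~\ref{lm:Xbound} together with the closedness of $\cC$ we have $X^*\in\cC$ and $\sigma_{\min}(X^*)\geq\us>0$, whence $||X^*||_2\leq M$ and $||\nabla f(X^*)||_2\leq||\nabla f(X^*)||\ff\leq N$. Since Assumption~\ref{a3} forces $\beta>4MN/\us^2>MN/\us^2$, there is a slack $\delta>0$ with $\beta\us^2>MN+\delta\geq||\nabla f(X^*)||_2\,||X^*||_2+\delta$, so the requirement $\beta>(||\nabla f(X^*)||_2\,||X^*||_2+\delta)/\sigma_{\min}^2(X^*)$ of Lemma~\ref{lm:2} holds. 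As $\nabla_X\cLb(X^*,\Lambda^*)=0$, the estimate in Lemma~\ref{lm:2} forces $||{X^*}\zz X^*-I_p||\ff=0$, so $X^*\in\stiefel$, and the concluding clause of Lemma~\ref{lm:2} gives that $X^*$ fulfills \eqref{eq:kkt} and \eqref{eq:kkt2}.

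For the complexity estimate I would sum the sufficient-decrease inequality \eqref{eq:suffred} over $k=0,\dots,K-1$ and telescope, yielding $c_3\sum_{k=0}^{K-1}||\nabla_X\cLb(X^k,\Lambda^k)||\fs\leq h(X^0)-h(X^K)$. The remaining work is to replace $h(X^0)-h(X^K)$ by the explicit constant in \eqref{eq:rate}. From the definition \eqref{eq:merit}, I would bound $h(X^0)$ above and $h(X^K)$ below: the penalty term contributes exactly $\beta R^2/4$ at $X^0$ and is nonnegative at $X^K$; the objective gives $f(X^0)$ at $X^0$ and at least $\underline{f}$ at $X^K$ because $X^K\in\cC$; and the indefinite cross term $-\frac{1}{2}\langle\varPsi(\nabla f(X)\zz X),X\zz X-I_p\rangle$ is controlled by Cauchy--Schwarz together with $||\varPsi(\nabla f(X)\zz X)||\ff\leq||\nabla f(X)||\ff\,||X||_2\leq MN$ and $||X\zz X-I_p||\ff\leq R$, contributing at most $MNR/2$ in absolute value at each endpoint. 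Adding these bounds yields $h(X^0)-h(X^K)\leq f(X^0)-\underline{f}+MNR+\beta R^2/4$. Finally, estimating the minimum by the average, $K\cdot\min_{k<K}||\nabla_X\cLb(X^k,\Lambda^k)||\fs\leq\sum_{k=0}^{K-1}||\nabla_X\cLb(X^k,\Lambda^k)||\fs$, and taking square roots produces \eqref{eq:rate}, with the strict inequality inherited from the strict per-step decrease whenever the algorithm has not already terminated at a zero gradient.

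I expect the main obstacle to lie in the qualitative bookkeeping rather than in the telescoping. One must ensure that the $\beta$-threshold of Assumption~\ref{a3} supplies a strictly positive slack $\delta$ valid simultaneously at every cluster point---this works because $M$, $N$ and $\us$ are fixed constants governing all of $\cC$---and that Lemma~\ref{lm:2} is invoked at a point where $\nabla_X\cLb$ vanishes exactly, as furnished by Corollary~\ref{cl:glc}, rather than only in the limit. The complexity part is then routine, the sole delicate point being the sign and magnitude of the indefinite cross term in $h$, which the Cauchy--Schwarz bound above keeps under control.
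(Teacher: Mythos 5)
Your proof is correct and takes essentially the same route as the paper: the qualitative part combines Lemma \ref{lm:Xbound}, Corollary \ref{cl:glc} and Lemma \ref{lm:2} exactly as the paper does, and the complexity bound is obtained by telescoping \eqref{eq:suffred} and bounding the merit-function gap by $f(X^0)-\underline{f}+MNR+\beta R^2/4$, which is precisely the paper's estimates \eqref{eq:tmp1}--\eqref{eq:tmp2} followed by the min-versus-average step. You in fact spell out details the paper leaves implicit, namely verifying the hypotheses of Lemma \ref{lm:2} at a cluster point (via $\sigma_{\min}(X^*)\geq\us$ and the slack afforded by $\beta>4MN/\us^2$) and the Cauchy--Schwarz bound $\bigl|\tfrac{1}{2}\langle\varPsi(\nabla f(X)\zz X),X\zz X-I_p\rangle\bigr|\leq MNR/2$ on the cross term of $h$.
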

\begin{proof}
	The first part directly holds from Corollaries \ref{cl:glc} and Lemma \ref{lm:2}.
	Recalling Lemma \ref{lm:hdecrease}, we have
	\begin{eqnarray}\label{eq:tmp1}
		h(X^0) - \min\limits_{X\in\cC} h(X)
		&\geq& h(X^0) - h(X^K)\geq \sum\limits_{k=0}^{K-1} c_3 ||\nabla_X \cLb(X^k,\Lambda^k)||\fs\\
		&\geq& c_3 K\cdot  \min\limits_{k=0,...,K-1} ||\nabla_X \cLb(X^k,\Lambda^k)||\fs.
	\end{eqnarray}
	Moreover, we have
	\begin{eqnarray}\label{eq:tmp2}
		h(X^0)\leq f(X^0) +\frac{1}{2}MNR+\frac{\beta}{4}R^2,\quad \min\limits_{X\in\cC} h(X)
		\geq \underline{f} - \frac{1}{2}MNR.
	\end{eqnarray}
	Combining \eqref{eq:tmp1}-\eqref{eq:tmp2}, we arrive at the argument \eqref{eq:rate}.
\end{proof}

\begin{corollary}\label{cl:2}
Suppose all the assumptions of Theorem \ref{thm:PLAM} hold.
Besides, for a given positive parameter $\delta$, it holds that
$\beta > (MN+\delta)/\underline{\sigma}$, then it holds that
\begin{eqnarray*} %\label{eq:rate}
\min\limits_{k=0,...,K-1} \max\left\{||I_p-{X^k}\zz X^k||\ff, ||\nabla_X \cLb(X^k,\Lambda^k)||\ff\right\}
< \max\left\{\frac{M}{\delta},1\right\}\sqrt{\frac{f(X^0)-\underline{f}+MNR+\beta R^2/4}{c_3K}}.
\end{eqnarray*}
\end{corollary}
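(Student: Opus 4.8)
The plan is to derive the claimed bound by feeding the per-iterate feasibility estimate of Lemma~\ref{lm:2} into the worst-case complexity bound of Theorem~\ref{thm:PLAM}. The crucial observation is that, under the assumptions of Theorem~\ref{thm:PLAM}, Lemma~\ref{lm:Xbound} guarantees that \emph{every} iterate $X^k$ lies in the compact set $\cC$ and satisfies $\sigma_{\min}(X^k)\geq \us>0$. This places all iterates in exactly the regime where the hypotheses of Lemma~\ref{lm:2} can be verified uniformly, so that the feasibility violation at each $X^k$ is controlled by the augmented-Lagrangian gradient at $X^k$, and no iterate-dependent constants appear.

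First I would fix an arbitrary index $k$ and check the $\beta$-hypothesis of Lemma~\ref{lm:2} at $X^k$. Using the constants from \eqref{eq:notation}, we have $\|X^k\|_2\leq M$ and $\|\nabla f(X^k)\|_2\leq \|\nabla f(X^k)\|\ff\leq N$, while Lemma~\ref{lm:Xbound} gives $\sigma_{\min}^2(X^k)\geq \us^2$. Hence $(\|\nabla f(X^k)\|_2\,\|X^k\|_2+\delta)/\sigma_{\min}^2(X^k)\leq (MN+\delta)/\us^2$, so the lower bound imposed on $\beta$ in the hypothesis guarantees the required strict inequality at $X^k$. Applying Lemma~\ref{lm:2} with $\Lambda^k=\varPsi(\nabla f(X^k)\zz X^k)$ then yields
\begin{eqnarray*}
\|I_p-{X^k}\zz X^k\|\ff \leq \frac{\|X^k\|_2}{\delta}\|\nabla_X \cLb(X^k,\Lambda^k)\|\ff \leq \frac{M}{\delta}\|\nabla_X \cLb(X^k,\Lambda^k)\|\ff .
\end{eqnarray*}
Together with the trivial bound on the gradient term itself, this shows that both quantities inside the maximum are dominated by $\max\{M/\delta,1\}\,\|\nabla_X \cLb(X^k,\Lambda^k)\|\ff$.

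Finally, since $\max\{M/\delta,1\}$ is a positive constant independent of $k$, it factors out of the minimum, so that
\begin{eqnarray*}
\min_{k=0,\dots,K-1}\max\left\{\|I_p-{X^k}\zz X^k\|\ff,\,\|\nabla_X \cLb(X^k,\Lambda^k)\|\ff\right\}
\leq \max\left\{\tfrac{M}{\delta},1\right\}\min_{k=0,\dots,K-1}\|\nabla_X \cLb(X^k,\Lambda^k)\|\ff ,
\end{eqnarray*}
and substituting the bound \eqref{eq:rate} of Theorem~\ref{thm:PLAM} for the right-hand minimum delivers the assertion, with the strict inequality inherited from \eqref{eq:rate}. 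The argument is essentially mechanical; the only point requiring genuine care is the uniform verification of the hypothesis of Lemma~\ref{lm:2} across all iterates, which is precisely what the uniform estimates over $\cC$ (through $M$, $N$, and the singular-value floor $\us$ supplied by Lemma~\ref{lm:Xbound}) provide, together with the elementary observation $\|\cdot\|_2\leq\|\cdot\|\ff$ that lets the spectral-norm condition of Lemma~\ref{lm:2} be met using the Frobenius-norm constant $N$.
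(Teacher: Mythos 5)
Your route is exactly the paper's intended one: the paper disposes of this corollary in a single line, as a direct combination of Lemma \ref{lm:2} (feasibility violation controlled by the augmented-Lagrangian gradient) with Theorem \ref{thm:PLAM} (worst-case bound on that gradient), with Lemma \ref{lm:Xbound} supplying the uniform bounds $\|X^k\|_2\le M$, $\|\nabla f(X^k)\|_2\le N$, $\sigma_{\min}(X^k)\ge\us$ that make Lemma \ref{lm:2} applicable at every iterate; your factoring of $\max\{M/\delta,1\}$ out of the minimum is also exactly what is intended.

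However, one step of your verification is asserted but false as written. Lemma \ref{lm:2} at $X^k$ requires $\beta>\left(\|\nabla f(X^k)\|_2\,\|X^k\|_2+\delta\right)/\sigma_{\min}^2(X^k)$, and, as you correctly compute, the uniform bounds reduce this to requiring $\beta>(MN+\delta)/\us^2$. But the corollary's stated hypothesis is $\beta>(MN+\delta)/\us$, and since Assumption \ref{a2} places $\us\in(0,1)$, one has $(MN+\delta)/\us<(MN+\delta)/\us^2$: the stated hypothesis is \emph{strictly weaker} than what your argument needs, so it does not, as you claim, ``guarantee the required strict inequality at $X^k$.'' This mismatch is almost certainly a typo in the paper itself (the exponent on $\us$ should be $2$, consistent with Lemma \ref{lm:2}), and with that correction your proof is complete and identical to the paper's. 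Note also that Assumption \ref{a3} already forces $\beta>4MN/\us^2$, so your argument does go through unconditionally whenever $\delta\le 3MN$; it is only for $\delta>3MN$ that the hypothesis as stated is genuinely insufficient. A careful write-up should either flag the discrepancy and prove the corollary under the corrected hypothesis $\beta>(MN+\delta)/\us^2$, or restrict attention to the regime in which Assumption \ref{a3} fills the gap, rather than silently asserting an implication that fails for $\us<1$.
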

\begin{proof}
 This is a direct corollary of Lemma \ref{lm:2} and Theorem \ref{thm:PLAM}.
\end{proof}

\begin{remark}
The sublinear convergence rate of Corollary \ref{cl:2} actually tells us
that Algorithm \ref{alg:PLAM} terminates after $O(1/\epsilon^2)$ iterations, if the 
stopping criterion is set as $ \max\left\{||I_p-{X^k}\zz X^k||\ff, ||\nabla_X \cLb(X^k,\Lambda^k)||\ff\right\}<\epsilon$.
\end{remark}

\subsection{Local Convergence Rate of PLAM and PCAL}
	
In this subsection, we consider the local convergence of PLAM
once the
optimization problem with orthogonality constraints \eqref{prob} has an isolated local minimizer.

\begin{theorem}\label{thm:PLAM-Local}
	Suppose $X^*$ is an isolated minimizer of \eqref{prob}, 
	and we denote
	\begin{eqnarray*}
		\tau:=\inf\limits_{0\neq Y\in\TX} \frac{\tr(Y\zz \nabla^2 f(X)[Y] - \Lambda Y\zz Y)}{||Y||\fs}.
	\end{eqnarray*}
	The algorithm parameters satisfy
	$\beta \geq \frac{L+MN+\tau}{2}$ and 
	$\eta^k \in[\underline{\eta},\bar{\eta}]$, where $\bar{\eta}\geq \underline{\eta}\geq L+MN+2\beta$. 
	Then, there exists $\varepsilon>0$ such that
	starting from any $X^0$ satisfying $||X^0-X^*||\ff < \varepsilon$, 
	and the iterate sequence $\{X^k\}$ generated by
	Algorithm \ref{alg:PLAM} converges to $X^*$ Q-linearly.
\end{theorem}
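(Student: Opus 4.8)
The plan is to analyze one PLAM step near $X^*$ as a fixed-point iteration. Writing the search direction from \eqref{eq:PLAM-main} as $g(X):=\nabla f(X)-X\varPsi(\nabla f(X)\zz X)+\beta X(X\zz X-I_p)=\nabla_X\cLb(X,\varPsi(\nabla f(X)\zz X))$, the update is $X^{k+1}=X^k-\tfrac{1}{\eta^k}g(X^k)$. First I would verify that $X^*$ is a common fixed point of every such map: an isolated minimizer is (by the optimality characterization established above) a first-order stationary point, so $\nabla f(X^*)=X^*\Lambda^*$ with $\Lambda^*=\varPsi(\nabla f(X^*)\zz X^*)$ and ${X^*}\zz X^*=I_p$, whence $g(X^*)=\nabla f(X^*)-X^*\Lambda^*+\beta X^*({X^*}\zz X^*-I_p)=0$. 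Setting $\Delta^k:=X^k-X^*$ and Taylor-expanding $g$ about $X^*$ gives $\Delta^{k+1}=\big(\mathrm{Id}-\tfrac{1}{\eta^k}Dg(X^*)\big)[\Delta^k]+o(\|\Delta^k\|\ff)$, so Q-linear convergence reduces to a uniform contraction estimate $\|\mathrm{Id}-\tfrac{1}{\eta}Dg(X^*)\|\le\rho<1$ for $\eta\in[\underline\eta,\bar\eta]$, together with an induction keeping the iterates in the neighborhood where the remainder is negligible.

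Next I would compute the Jacobian of the search direction and split it into a symmetric and a non-symmetric piece. Differentiating $g$ and using ${X^*}\zz X^*=I_p$ yields $Dg(X^*)=H-E$, where
\[ H[S]=\nabla^2 f(X^*)[S]-S\Lambda^*+\beta X^*(S\zz X^*+{X^*}\zz S)=\nabla^2_{XX}\cLb(X^*,\Lambda^*)[S] \]
is the symmetric augmented-Lagrangian Hessian, while $E[S]=X^*\varPsi\big((\nabla^2 f(X^*)[S])\zz X^*+\nabla f(X^*)\zz S\big)$ is the extra, generally non-symmetric operator produced by the $X$-dependence of the multiplier map $\Lambda(X)=\varPsi(\nabla f(X)\zz X)$. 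The contraction rests on two facts about the quadratic form $q(S):=\langle S,Dg(X^*)[S]\rangle=\langle S,H[S]\rangle-\langle S,E[S]\rangle$: a coercivity bound $q(S)\ge\mu\|S\|\fs$ with $\mu>0$, and the curvature upper bound $\lambda_{\max}(H)\le L+MN+2\beta\le\eta^k$ (from $\|\nabla^2 f\|_2\le L$ on $\cC$, $\|\Lambda^*\|_2\le MN$, and the penalty block $2\beta\|W\|\fs$), which certifies that the stepsize $1/\eta^k$ is stable.

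For the coercivity I would decompose $S=Y+X^*W$ into its tangent part $Y\in\mathcal{T}(X^*)$ and normal part $X^*W$ with $W\in\Sp$. On the tangent space both the penalty term and $E$ drop out of the quadratic form: one checks $\langle S,E[S]\rangle=\langle (\nabla^2 f(X^*)[S])\zz X^*+\nabla f(X^*)\zz S,\ \varPsi({X^*}\zz S)\rangle$, which vanishes whenever the normal coefficient $W=\varPsi({X^*}\zz S)$ is zero, so $q(Y)=\langle Y,H[Y]\rangle\ge\tau\|Y\|\fs$ straight from the definition of $\tau$; strictness $\tau>0$ reflects the strict second-order sufficient condition \eqref{eq:local} that holds at the isolated minimizer. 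On the normal space the penalty supplies curvature $2\beta\|W\|\fs$, which against the bounded contributions of $\nabla^2 f$, $\Lambda^*$ and $E$ is rendered positive by $\beta\ge(L+MN+\tau)/2$; the tangent--normal cross terms, being $O(\|Y\|\ff\|W\|\ff)$, are then absorbed into the diagonal blocks for $\beta$ large.

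The hard part will be reconciling the non-symmetric $E$ with the contraction. Because $Dg(X^*)$ is not self-adjoint, a positive lower bound on $q$ alone does not control $\|\mathrm{Id}-\tfrac1\eta Dg(X^*)\|$; the Frobenius estimate $\|\mathrm{Id}-\tfrac1\eta Dg(X^*)\|^2\le 1-\tfrac{2\mu}{\eta}+\tfrac{\|Dg(X^*)\|^2}{\eta^2}$ forces a delicate balance, since $E$ maps tangent directions into the normal space and so inflates $\|Dg(X^*)\Delta\|$ even where the curvature is smallest. Verifying that this quantity stays strictly below $1$ under the prescribed bounds $\beta\ge(L+MN+\tau)/2$ and $\eta\ge L+MN+2\beta$ is the core obstacle; once a rate $\rho<1$ is secured, a neighborhood-invariance induction (the remainder being $o(\|\Delta^k\|\ff)$ and hence dominated on a small enough ball) promotes the one-step bound $\|\Delta^{k+1}\|\ff\le\rho\|\Delta^k\|\ff$ into global-in-$k$ Q-linear convergence to $X^*$.
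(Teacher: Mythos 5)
Your skeleton is the same as the paper's: view one PLAM step as a fixed-point map, Taylor-expand at $X^*$ as in \eqref{eq:iter}, split $\delta^k$ into tangent and normal parts, get coercivity $\tau$ on the tangent space (the paper's \eqref{eq:rate2}) and $2\beta-L-MN\geq\tau$ on the normal space (\eqref{eq:rate3}), and use $\eta^k\geq L+MN+2\beta$ for stepsize stability (\eqref{eq:rate1}). You are in fact \emph{more} careful than the paper on one point: since $\Lambda^k=\varPsi(\nabla f(X^k)\zz X^k)$ depends on $X^k$, the true Jacobian of the iteration map is $\nabla^2_{XX}\cLb(X^*,\Lambda^*)-E$, with your non-self-adjoint correction $E$ coming from the multiplier's $X$-dependence; the paper's expansion \eqref{eq:iter} keeps only the frozen-multiplier Hessian, and its concluding step then relies on exactly the self-adjointness of that operator (positive semidefiniteness from \eqref{eq:rate1} together with the coercivity \eqref{eq:rate4} yields the operator-norm bound by an eigenvalue argument that is only valid for self-adjoint operators).

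However, your proposal does not prove the theorem, and you say so yourself: the uniform contraction $\|\mathrm{Id}-\tfrac{1}{\eta}Dg(X^*)\|\leq\rho<1$ for all $\eta\in[\underline{\eta},\bar{\eta}]$ is precisely what is left open. Once $E$ is retained, coercivity of the quadratic form no longer controls the operator norm; your own estimate $\|\mathrm{Id}-\tfrac{1}{\eta}Dg(X^*)\|^2\leq 1-\tfrac{2\mu}{\eta}+\tfrac{\|Dg(X^*)\|^2}{\eta^2}$ falls below $1$ only when $\eta>\|Dg(X^*)\|^2/(2\mu)$, and since $\mu\leq\tau$ may be arbitrarily small while $\|Dg(X^*)\|$ is of order $L+MN+2\beta$, the hypothesis $\eta^k\geq L+MN+2\beta$ does not imply this; so the decisive estimate is deferred, not established. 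Two subsidiary steps have the same character: absorbing the tangent--normal cross terms ``for $\beta$ large'' is not available, because $\beta$ is pinned by the hypothesis $\beta\geq(L+MN+\tau)/2$ and cannot be inflated (the paper's own inequality in \eqref{eq:rate4} also glosses over these cross terms); and the identity $\langle S,E[S]\rangle=0$ on the tangent space does not neutralize $E$, since $E$ still inflates $\|Dg(X^*)[S]\|$ there, which is what enters your Frobenius bound. To close the argument you would need either to exploit the structure of $E$ --- its range lies in the normal space $\{X^*S: S\in\Sp\}$, where the $2\beta$ curvature dominates, suggesting a block-wise contraction argument --- or to justify the paper's shortcut of discarding $E$ (i.e., prove that linearizing $\cLb$ only in its first argument is legitimate) and then invoke self-adjointness. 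As written, your proposal does neither.
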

\begin{proof}
 	We study the iterate formula \eqref{eq:PLLag}.
 	\begin{eqnarray*}
 		X^{k+1} &=& X^k - \frac{1}{\eta^k}\nabla_X \cLb(X^k,\varPsi(\nabla f(X^k)\zz X^k));\\
 	    X^* &=& X^* - \frac{1}{\eta^k}\nabla_X \cLb(X^*,\varPsi(\nabla f(X^*)\zz X^*)).
 	\end{eqnarray*}
 	Subtracting the second one from the first one and using the Taylor expansion, we have
 	\begin{eqnarray}\label{eq:iter}
 		\delta^{k+1} = \delta^k - \frac{1}{\eta^k}\nabla_{XX}^2 \cLb(X^*,\varPsi(\nabla f(X^*)\zz X^*))[\delta^k]
 		+ o(||\delta^k||),
 	\end{eqnarray}
 	where $\delta^k=X^k-X^*$. Recall the expression of Hessian \eqref{eq:O2},
 	the fact that $\nabla f(X^*)\zz X^* = \varPsi(\nabla f(X^*)\zz X^*)$ and the assumption
 	on $\eta$, we have
 	\begin{eqnarray}\label{eq:rate1}
 		\left\|\frac{1}{\eta^k}\nabla_{XX}^2 \cLb(X^*,\nabla f(X^*)\zz X^*)[\delta^k]\right\|\ff
 		\leq ||\delta^k||\ff.
 	\end{eqnarray}
 	On the other hand, $\delta^k$ can be decomposed as the summation of three \revise{terms}:
 	\begin{eqnarray}\label{eq:decomp}
 		\delta^k = X^* S + X^* W + K,
 	\end{eqnarray}
 	where $S\in\R^{p\times p}$ is symmetric, $W\in\R^{p\times p}$ is skew-symmetric, $K\in\R^{n\times p}$
 	is perpendicular to $X^*$.
 	Since $X^*$ is a strict local minimizer, and $\TX$ is closed, we have $\tau >0$.
 	Hence, it holds that
 	\begin{eqnarray}\label{eq:rate2}
 		\tr\left((X^* W + K)\zz \nabla_{XX}^2 \cLb(X^*,\nabla f(X^*)\zz X^*)[X^* W + K]\right)
 		\geq \tau||X^* W + K||\fs,
 	\end{eqnarray}
 	as $X^* W + K\in\TX$.
 	Moreover, it follows from the assumption on $\beta$ that
 	\begin{eqnarray}\label{eq:rate3}
 		&& \tr\left((X^*S)\zz\nabla_{XX}^2 \cLb(X^*,\nabla f(X^*)\zz X^*)[X^* S]\right)\nonumber\\
 		&=& \tr(SX^*\nabla^2 f(X^*)X^* S - S^2 \nabla f(X^*)\zz X^* + 2\beta S^2)
 		\geq \tau||X^* S||\fs.
 	\end{eqnarray} 
 	Combining \eqref{eq:rate2}, \eqref{eq:rate3}, the symmetry of $S$, 
 	the skew symmetry of $W$, $K\zz X^*=0$ together with the assumption on $\eta$, we arrive at
 	\begin{eqnarray}\label{eq:rate4}
	&&	\tr\left({\delta^k}\zz\nabla_{XX}^2 \cLb(X^*,\nabla f(X^*)\zz X^*)[\delta^k]\right)\nonumber\\
	&=& \tr\left((X^* W + K)\zz \nabla_{XX}^2 \cLb(X^*,\nabla f(X^*)\zz X^*)[X^* W + K]\right)\nonumber\\
	&& + \tr\left((X^* W + K)\zz \nabla_{XX}^2 \cLb(X^*,\nabla f(X^*)\zz X^*)[X^*S]\right)\nonumber\\
    && + \tr\left((X^*S)\zz \nabla_{XX}^2 \cLb(X^*,\nabla f(X^*)\zz X^*)[X^* W + K]\right)\nonumber\\
    && + \tr\left((X^*S)\zz\nabla_{XX}^2 \cLb(X^*,\nabla f(X^*)\zz X^*)[X^* S]\right)\nonumber\\
    &\geq& \tau||X^* W + K||\fs + \tau||X^* S||\fs = \tau ||\delta^k||\fs.
 	\end{eqnarray}
 	
 	Notice that \eqref{eq:rate1} implies the positive semi-definiteness of the linear operator
 	$$I-\frac{1}{\eta^k}\nabla_{XX}^2 \cLb(X^*,\nabla f(X^*)\zz X^*).$$ Together with \eqref{eq:rate4},
 	we can conclude that
 	\begin{eqnarray*}
 		||\delta^{k+1}||\ff \leq (1-\tau)||\delta^k||\ff + o(||\delta^k||),
 	\end{eqnarray*}
 	which completes the proof.
\end{proof}

\begin{remark}
	The global and local convergence of PCAL can be established in the same manner as PLAM, 
	if the multipliers are updated by the same formula, \eqref{eq:Lambda1}, as PLAM.
\end{remark}

\section{Numerical Experiments}
In this section, we evaluate the numerical performance of our proposed algorithms PLAM and PCAL.
We first introduce the implementation details and the testing problems
in Subsection \ref{subsec:detail} and \ref{subsec:problem}, respectively.
Then, we report
the numerical experiments which are mainly of three folds.

In the first part, we mainly determine the default settings of our proposed algorithms,
which will be discussed in Subsection \ref{subsec:default}.
Then, in Subsection \ref{subsection:KSDFT}, we compare our PLAM and PCAL  with a few existing solvers by testing a bunch of instances, %which are arisen from Kohn-Sham total energy minimization.
which are chosen from a MATLAB toolbox KSSOLV \cite{Yang09}.
All the algorithms tested in the first two parts are run in serial.
The corresponding experiments are performed on a 
workstation with one Intel(R) Xeon(R) Processor E5-2697 v2 
(at 2.70GHz$\times 12$, 30M Cache) and 128GB of RAM running in MATLAB R2016b under Ubuntu 12.04. 

Finally, we investigate the parallel efficiency of PCAL by comparing with a parallelized version
of MOptQR in Subsection \ref{section: parallel}. All the 
experiments in this subsection are performed on a single node of LSSC-IV\footnote{More information at \href{http://lsec.cc.ac.cn/chinese/lsec/LSSC-IVintroduction.pdf}{ http://lsec.cc.ac.cn/chinese/lsec/LSSC-IVintroduction.pdf}}, which is a high-performance computing cluster (HPCC) maintained at the State Key Laboratory of Scientific and Engineering Computing (LSEC), Chinese Academy of Sciences. The operating system of LSSC-IV is Red Hat Enterprise Linux Server 7.3. This node called ``{\ttfamily b01}" consists of two Intel(R) Xeon(R) Processor E7-8890 v4 (at 2.20GHz$\times 24$, 60M Cache) with 4TB shared memory. The total number of processor cores in this node is 96.

\subsection{Implementation Details}\label{subsec:detail}
There are two parameters in our algorithms PLAM and PCAL.
According to Theorem \ref{thm:PLAM}, 
the penalty parameter $\beta$ for PLAM should be sufficiently large. Although we
can estimate a suitable $\beta$ to satisfy the assumption of the theorem,
it would be too large in practice.
In the numerical experiments, we set $\beta$ as
an upper bound of $s:=||\nabla^2 f(0)||_2$ for PLAM, and $1$ for PCAL.

Another one is the proximal parameter $\eta$, whose reciprocal is the step size of the
gradient step in Algorithm \ref{alg:PLAM} and \ref{alg:PCAL}. 
Similar to $\beta$, we can not use the rigorous restriction in the theoretical analysis.
In practice, we have the following strategies to choose this parameter:
%The parameter $\eta^k$ can be usually approximated by $||\nabla^2_{X} \cLb(X^k,\Lambda^k)||\ff$ 
%in the neighborhood of a solution. 
%In practice, we have the following four strategies to update $\eta^k$.
\begin{enumerate}
	\item $\eta_{\mathrm{C}}^k := \gamma$, where $\gamma>0$ is a sufficiently large constant.
	\item Differential approximation: 
	$$\eta_{\mathrm{D}}^k := \frac{||{\nabla_X \cLb(X^k,\Lambda^k) - \nabla_X \cLb(X^{k-1}, \Lambda^{k-1})}||_F}{||X^k-X^{k-1}||\ff}.$$
	\item Barzilai-Borwein(BB) strategy \cite{BB}:
	\begin{eqnarray*}
		{\eta^k_{\mathrm{BB}1}} := \frac{\abs{\jkh{S^{k-1},{Y^{k-1}}}}}{\jkh{S^{k-1},S^{k-1}}}, \quad
		\mbox{or} \quad {\eta^k_{\mathrm{BB}2}} :=\frac{{\jkh{Y^{k-1},{Y^{k-1}}}}}{\abs{\jkh{S^{k-1},Y^{k-1}}}},
	\end{eqnarray*}
	where
	\begin{eqnarray*}
		S^k = X^k - X^{k-1},\quad Y^k = \nabla_{X} \cLb(X^k,\Lambda^k) -  \nabla_{X} \cLb(X^{k-1},\Lambda^{k-1}).
	\end{eqnarray*}
	\item Alternating BB strategy \cite{Dai_Fletcher_2005}:
	\begin{eqnarray*}
		\eta_{\mathrm{ABB}}^k := \left\{
		\begin{array}{cl}
			\eta^k_{\mathrm{BB}1},& \mbox{for odd}~~k,\\
			\eta^k_{\mathrm{BB}2},& \mbox{for even}~~k.
		\end{array}
		\right.
	\end{eqnarray*}
\end{enumerate}

\comm{Unless specifically mentioned}, the stopping criterion used
for both serial and parallel experiments can be described as follows,
$$\frac{\norm{\nabla f({X})-{X}\nabla f({X})\zz {X}}\ff}{\norm{\nabla f({X^0})-{X^0}\nabla f({X^0})\zz {X^0}}\ff} < 10^{-8}.$$
The maximum number of iteration for all those solvers is set to $3000$. %\remove{ without special mention}.
%\begin{itemize}
%	\item[(i)] $\frac{\norm{\nabla f({X})-{X}\nabla f({X})\zz {X}}\ff}{\norm{\nabla f({X^0})-{X^0}\nabla f({X^0})\zz {X^0}}\ff} < 10^{-8}$% \quad \text{and} \quad ||X\zz X-I_p||<10^{-12}$;
%	\item[(ii)] $\frac{||X^k-X^{k-1}||\ff}{\sqrt{n}} < 10^{-12}$;
%	\item[(iii)] $\frac{|f(X^k)-f(X^{k-1})|}{|f(X^{k-1})|+1} < 10^{-12}$.
%\end{itemize}

%\clearpage
\subsection{Testing Problems}\label{subsec:problem}
In this subsection,
we introduce six types of problems which will be
used in the numerical experiments. 

{\bf Problem 1}: A simplification of discretized Kohn-Sham total energy minimization.
\begin{eqnarray}\label{eq:nonlinear}
\left.
\begin{array}{rc}
\min\limits_{X\in\Rnp} & \frac{1}{2}\tr(X\zz L X) + \frac{\alpha}{4}\rho(X)^\top
L^\dagger \rho(X)\\
\st & X\zz X=I_p,
\end{array}
\right.
\end{eqnarray}
where the matrix $L\in \Sn$ and $\rho(X):=\mbox{diag}(XX\zz)$. 
In the numerical experiments, we set $\alpha=1$, and $L$ is randomly generated by Gauss distribution,
 i.e.,  L{\ttfamily=randn(n)} in MATLAB language, and set  $L:=\frac{1}{2}(L+L\zz)$.
In this instance, $s=\norm{L}_2$.

{\bf Problem 2}: A class of quadratic minimization with orthogonality constraints.
\begin{eqnarray}\label{eq:OCQP}
\begin{array}{cc}
\min\limits_{X\in\Rnp}&\frac{1}{2}\tr(X\zz A X) +\tr(G\zz X)\\
\mbox{s.t.} & X\zz X=I_p,
\end{array}
\end{eqnarray}
where the matrices $A\in \Sn$ and $G\in \Rnp$.
This problem is adequately discussed in \cite{Gao2016}. In the numerical experiments, 
the matrices $A$ and $G$ are randomly generated in the same manner as in \cite{Gao2016}. 
Namely, %$A\in \R^{n\times n}$ and $G\in \Rnp$ are randomly generated by,
\begin{eqnarray}
A&:=&P\Lambda P\zz,\\
G&:=&\kappa \cdot QD,
\end{eqnarray}
where the matrices $P${\ttfamily=qr(rand(n,n))}$\in \R^{n\times n}$, $\tilde{Q}${\ttfamily=rand(n,p)}$\in \R^{n\times p}$, $Q\in\Rnp$ and $Q_i=\tilde{Q}_i/||\tilde{Q}_i||_2$ ($i=1,2,...,p$),
and matrices $\Lambda\in \R^{n\times n}$ and $D\in \R^{p\times p}$ are diagonal matrices with
\begin{eqnarray}
\Lambda_{ii} &:=& \left\{
\begin{array}{rc}
\theta^{1-i},& \rand{1}{1} < \xi\\
-\theta^{1-i},& \rand{1}{1} \ge \xi
\end{array}
\right. \quad \forall i=1,2,\dots,n,\\
D_{jj} &:=& \zeta^{j-1},\quad \forall j=1,2,\dots,p.
\end{eqnarray}
Here, parameter $\theta \ge 1$ determines the decay of eigenvalues of $A$; parameter $\zeta \ge 1$ refers to the growth rate of column's norm of $G$.
The parameter $\kappa>0$ represents the scale difference between the quadratic term and the linear term. The default setting of these parameters are $\kappa=1, \theta=1.01, \zeta=1.01, \xi=1$.
In this instance, $s=\norm{A}_2$.

{\bf Problem 3:} Rayleigh-Ritz trace minimization, which is a special case of Problem 2.
\begin{eqnarray}\label{eq:RR}
\begin{array}{cc}
\min\limits_{X\in\Rnp}&\frac{1}{2}\tr(X\zz A X)\\
\mbox{s.t.} & X\zz X=I_p,
\end{array}
\end{eqnarray}
where the matrices $A\in \Sn$. In our experiments, the matrix $A$ 
is generated in the same manner as in Problem 2. In this instance, $s=\norm{A}_2$.

{\bf Problem 4:} Another class of quadratic minimization with orthogonality constraints.
\begin{eqnarray}\label{eq:QAP}
\begin{array}{cc}
\min\limits_{X\in\Rnp}&\frac{1}{2}\tr(A\zz XBX\zz )\\
\mbox{s.t.} & X\zz X=I_p.
\end{array}
\end{eqnarray}
where the matrices $A\in \Sn$ and $B\in \Sp$.
This problem is out of the scope of problems discussed in \cite{Gao2016},
but can be solved by PLAM or PCAL.
%This actually means that our new approach is suitable for a larger scope. 
The matrices $A$ and $B$ are randomly generated by 
$A=${\ttfamily randn(n)}, $A:=\frac{1}{2}(A+A\zz)$ and $B=${\ttfamily randn(p)},
$B:=\frac{1}{2}(B+B\zz)$, respectively. In this instance, $s=\norm{A}_2\cdot\norm{B}_2$

{\bf Problem 5:} Discretized Kohn-Sham total energy minimization instances from KSSOLV \cite{Yang09}. 
\begin{eqnarray}\label{eq:KSDFT}
	\min\limits_{X\in\Rnp} \, E(X),\qquad \st\, X\zz X =I_p,
\end{eqnarray}
where the discretized Kohn-Sham total energy function $E(X)$ 
is defined by \eqref{eq:KS-energy}. 
All the data comes from MATLAB toolbox KSSLOV.  

{\bf Problem 6:} A synthetic instance of discretized Kohn-Sham total energy minimization.
\begin{eqnarray}\label{eq:nonlinear-exchange}
\left.
\begin{array}{rc}
\min\limits_{X\in\Rnp} & \frac{1}{2}\tr(X\zz L X) + \frac{1}{2}\rho(X)^\top L^\dagger \rho(X) - \frac{3}{4}\gamma\rho(X)\zz \rho(X)^{\frac{1}{3}}
\\
\st & X\zz X=I_p,
\end{array}
\right.
\end{eqnarray}
where the matrix $L\in \R^{n\times n}$ and $\rho(X):=\mbox{diag}(XX\zz)$. The parameter
 $\gamma=2(\frac{3}{\pi})^{1/3}$ and $\rho(X)^{\frac{1}{3}}$ denotes the component-wise cubic root of the
 vector $\rho(X)$. This problem adopts a special exchange functional $- \frac{3}{4}\gamma\rho(X)\zz 
 \rho(X)^{\frac{1}{3}}$ (the correlation term is ignored), which is introduced in \cite{Liu2015}.
 The generation of $L$ is in the same manner as in Problem 1.

%\clearpage
\subsection{Default Settings}\label{subsec:default}
In this subsection, we determine 
the default settings for the 
proposed algorithms PLAM and PCAL. 

In the first experiment, we test PLAM and PCAL with these four different choices of $\eta^k$ on Problem
1-4. Here, we only illustrate the results of $\eta_{\mathrm{BB}1}$ for strategy (iii), since its 
performances overwhelms those with $\eta_{\mathrm{BB}2}$. 
The penalty parameter is fixed as $\beta=s+0.1$.
{\sc Fig.} \ref{fig:eta} shows the results of PLAM and PCAL with different $\eta^k$. 
From subfigures (a)-(d), we observe that PLAM with 
$\eta_{\mathrm{ABB}}$ outperforms others. Under the same setting, a comparison among PCAL with different 
$\eta^k$ is reported in subfigures (e)-(h). We notice
that PCAL with $\eta_{\mathrm{ABB}}$ is superior to 
the other $\eta^k$ choices. Then we set $\eta_{\mathrm{ABB}}$ as the default setting for PLAM and PCAL.
\begin{figure}[htbp]
	\centering	
	\subfigure[Problem 1]
	{\includegraphics[width=0.245\textwidth,height=0.2\textwidth]
		{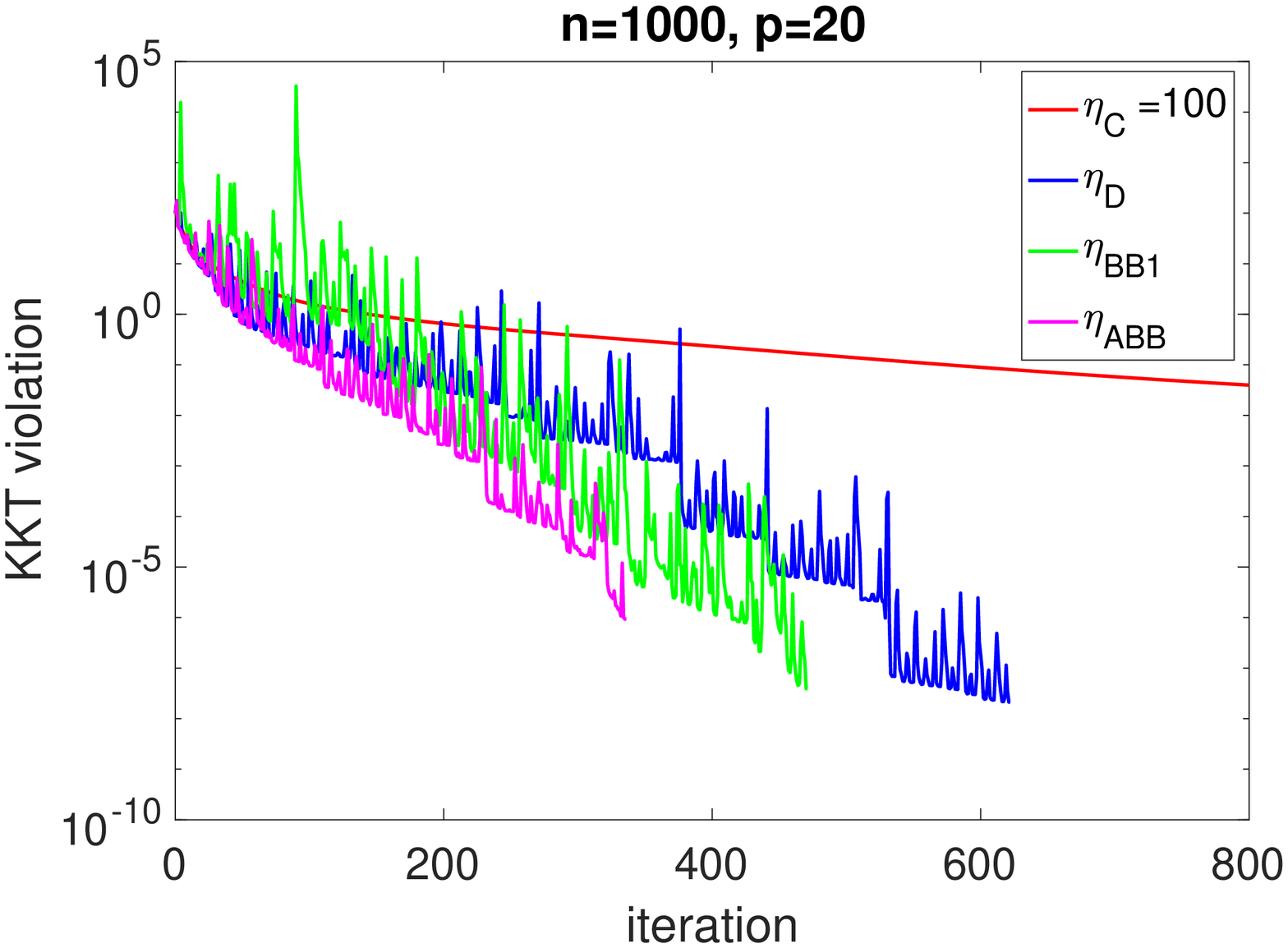}}
	\hfill
	\subfigure[Problem 2]
	{\includegraphics[width=0.245\textwidth,height=0.2\textwidth]
		{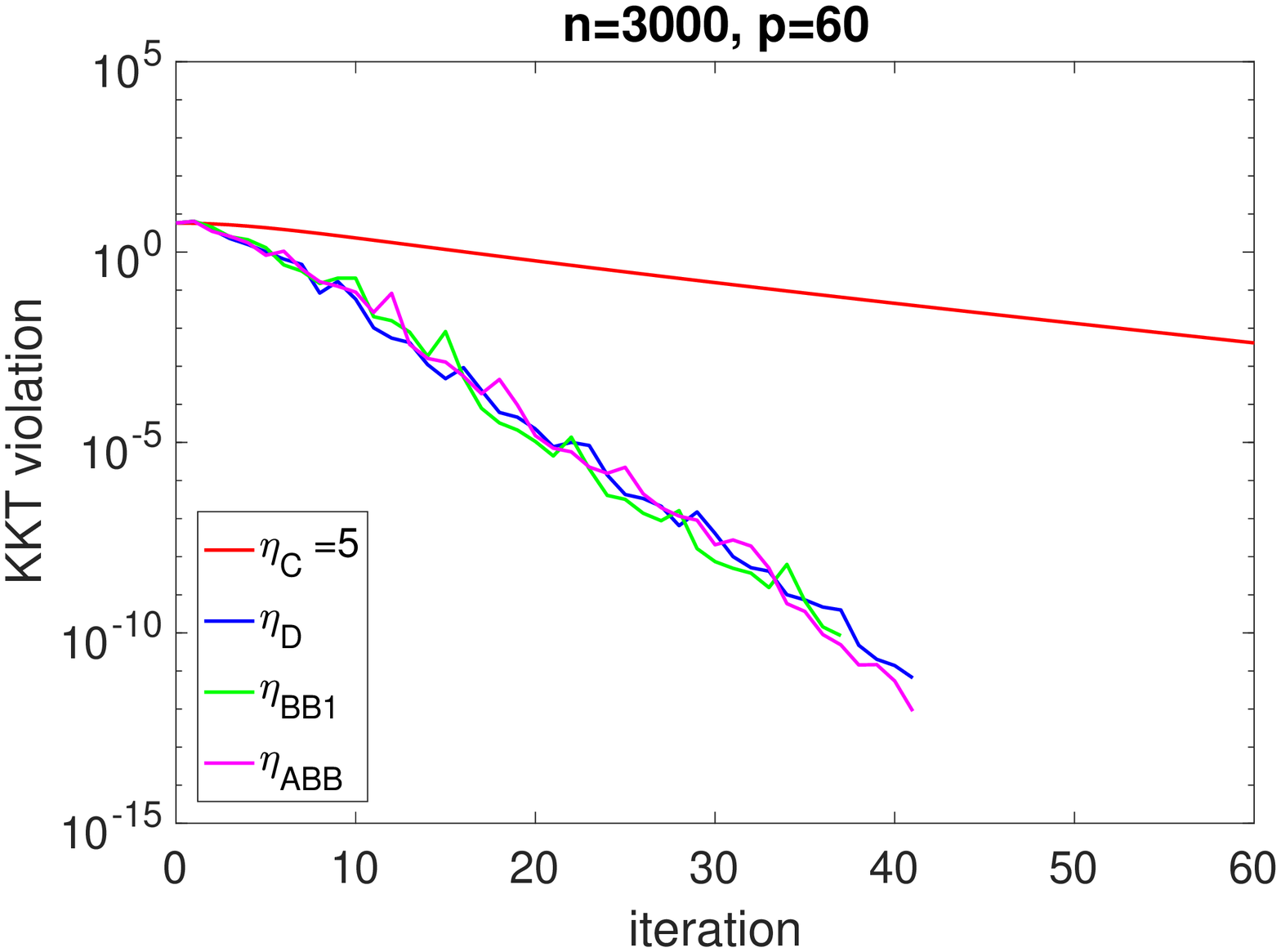}}
	\hfill
	\subfigure[Problem 3]
	{\includegraphics[width=0.245\textwidth,height=0.2\textwidth]
		{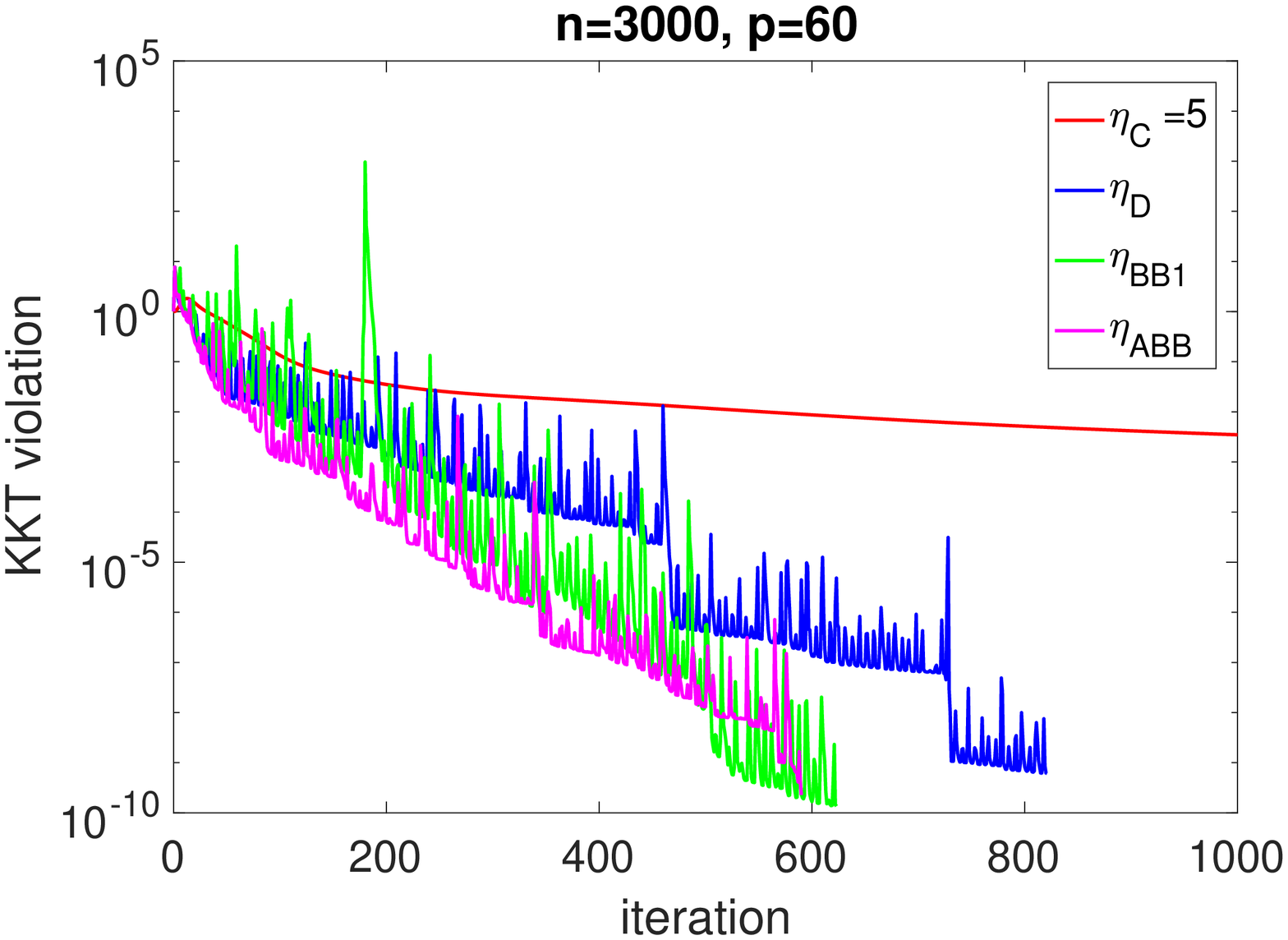}}
	\hfill
	\subfigure[Problem 4]
	{\includegraphics[width=0.245\textwidth,height=0.2\textwidth]
		{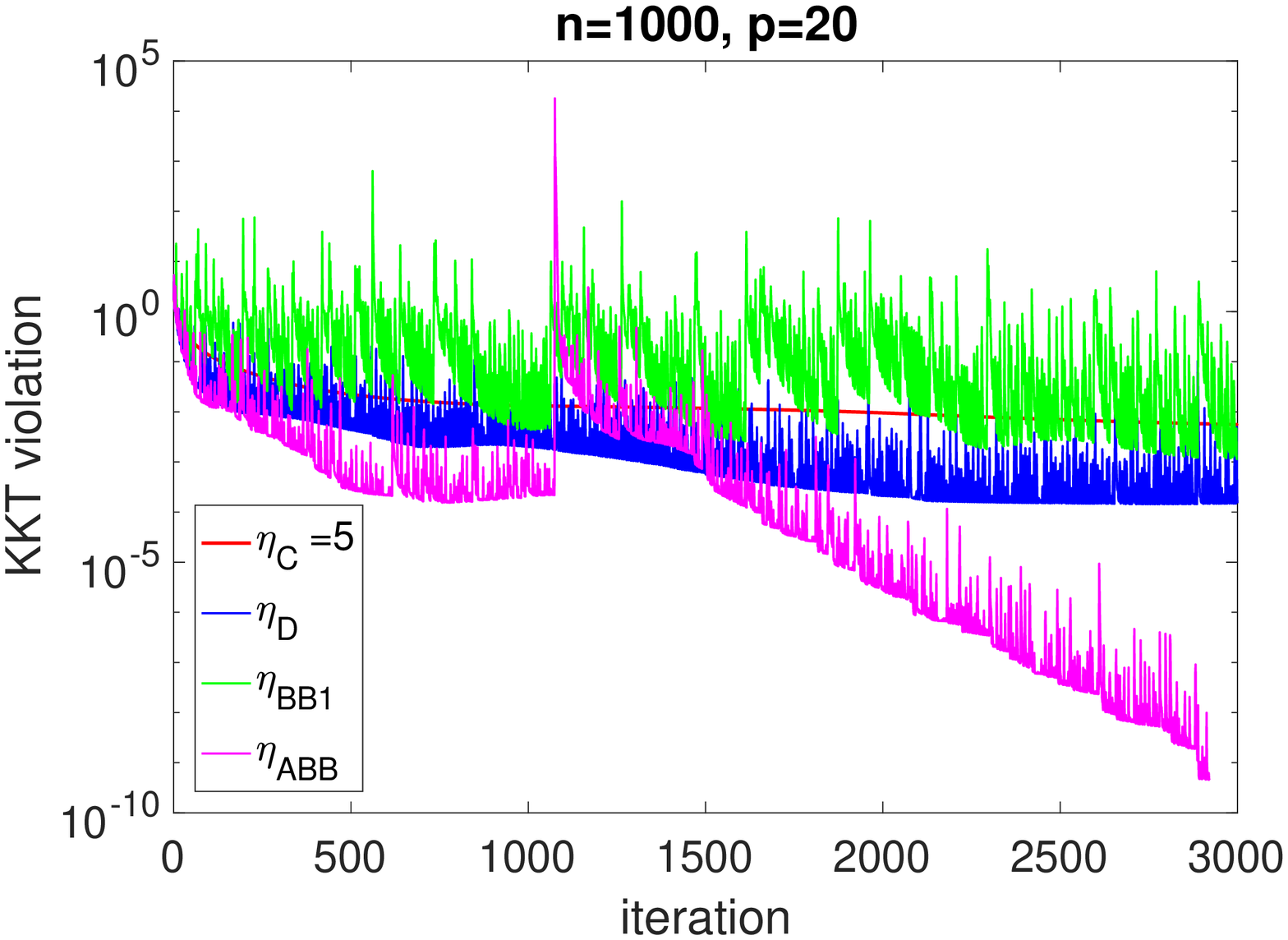}}
	
	\subfigure[Problem 1]
	{\includegraphics[width=0.245\textwidth,height=0.2\textwidth]
		{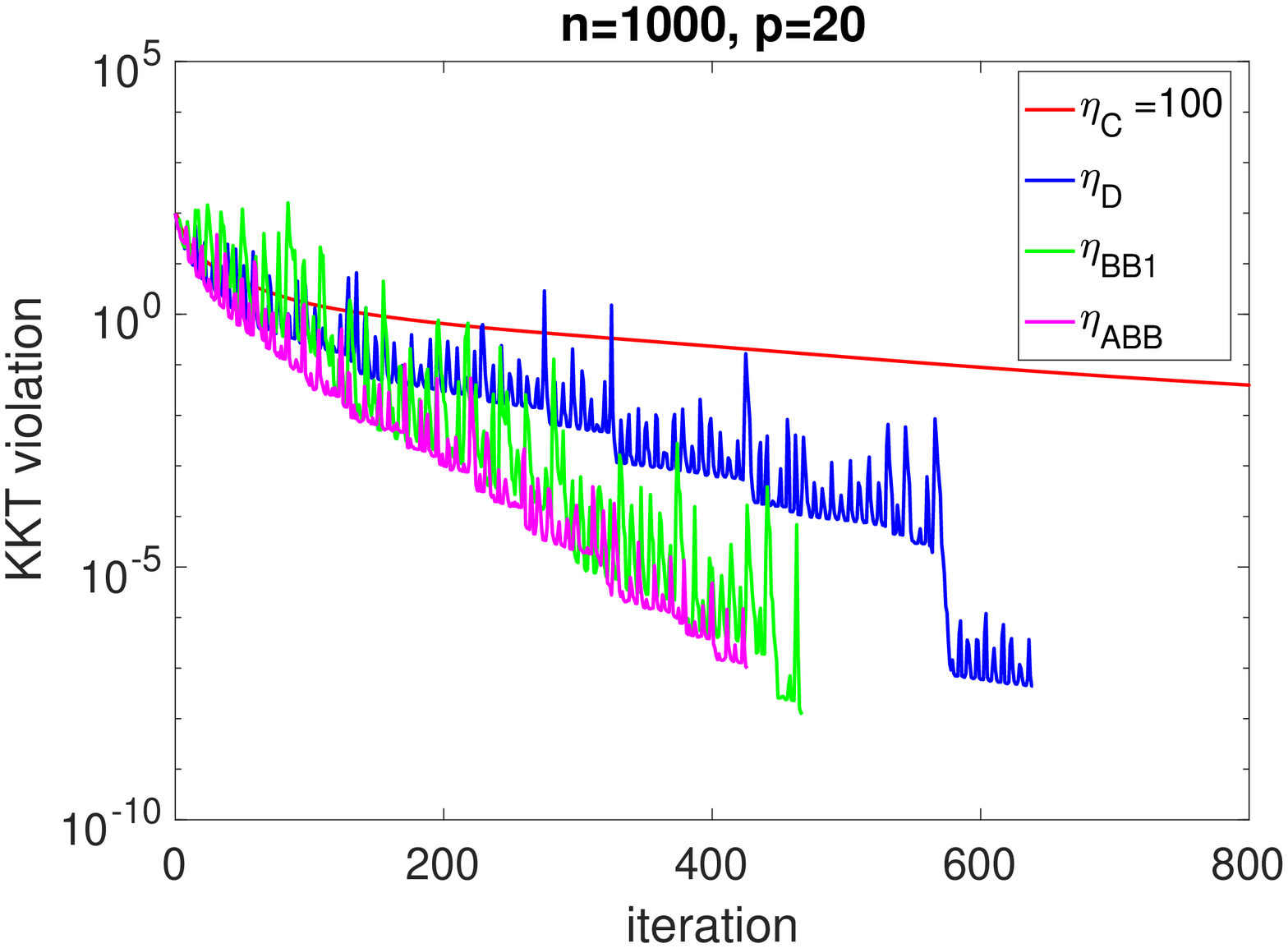}}
	\hfill
	\subfigure[Problem 2]
	{\includegraphics[width=0.245\textwidth,height=0.2\textwidth]
		{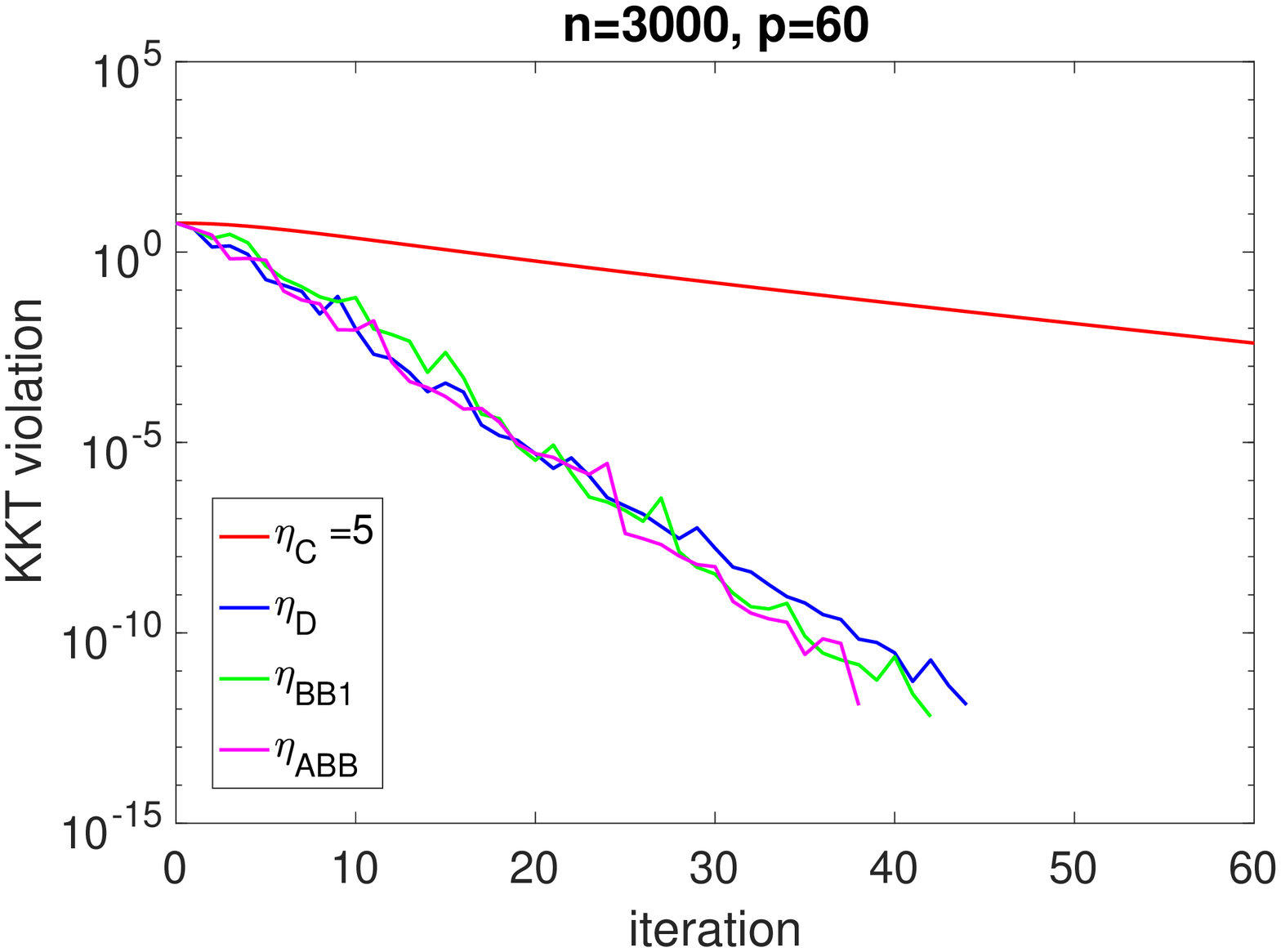}}
	\hfill
	\subfigure[Problem 3]
	{\includegraphics[width=0.245\textwidth,height=0.2\textwidth]
		{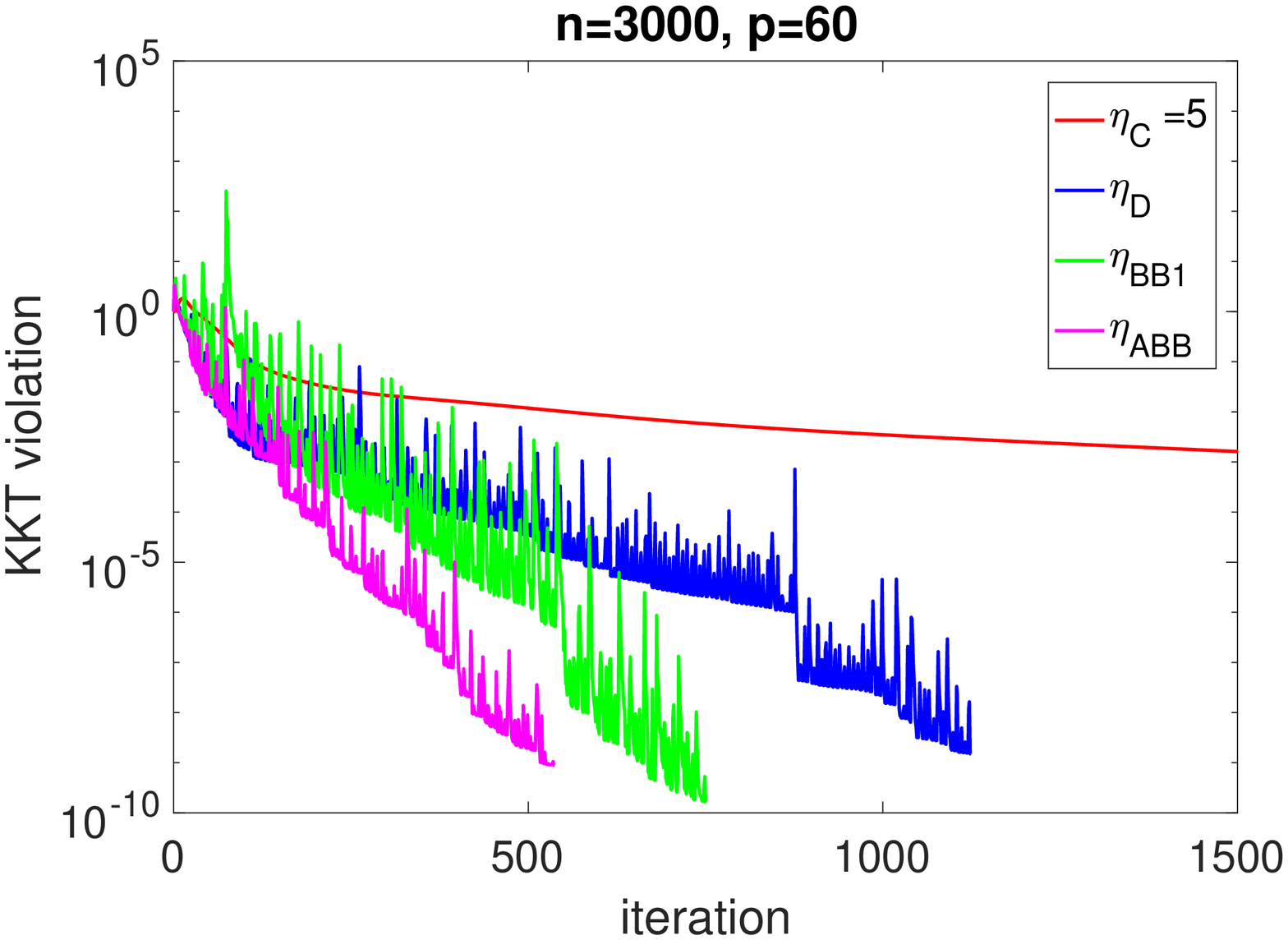}}
	\hfill
	\subfigure[Problem 4]
	{\includegraphics[width=0.245\textwidth,height=0.2\textwidth]
		{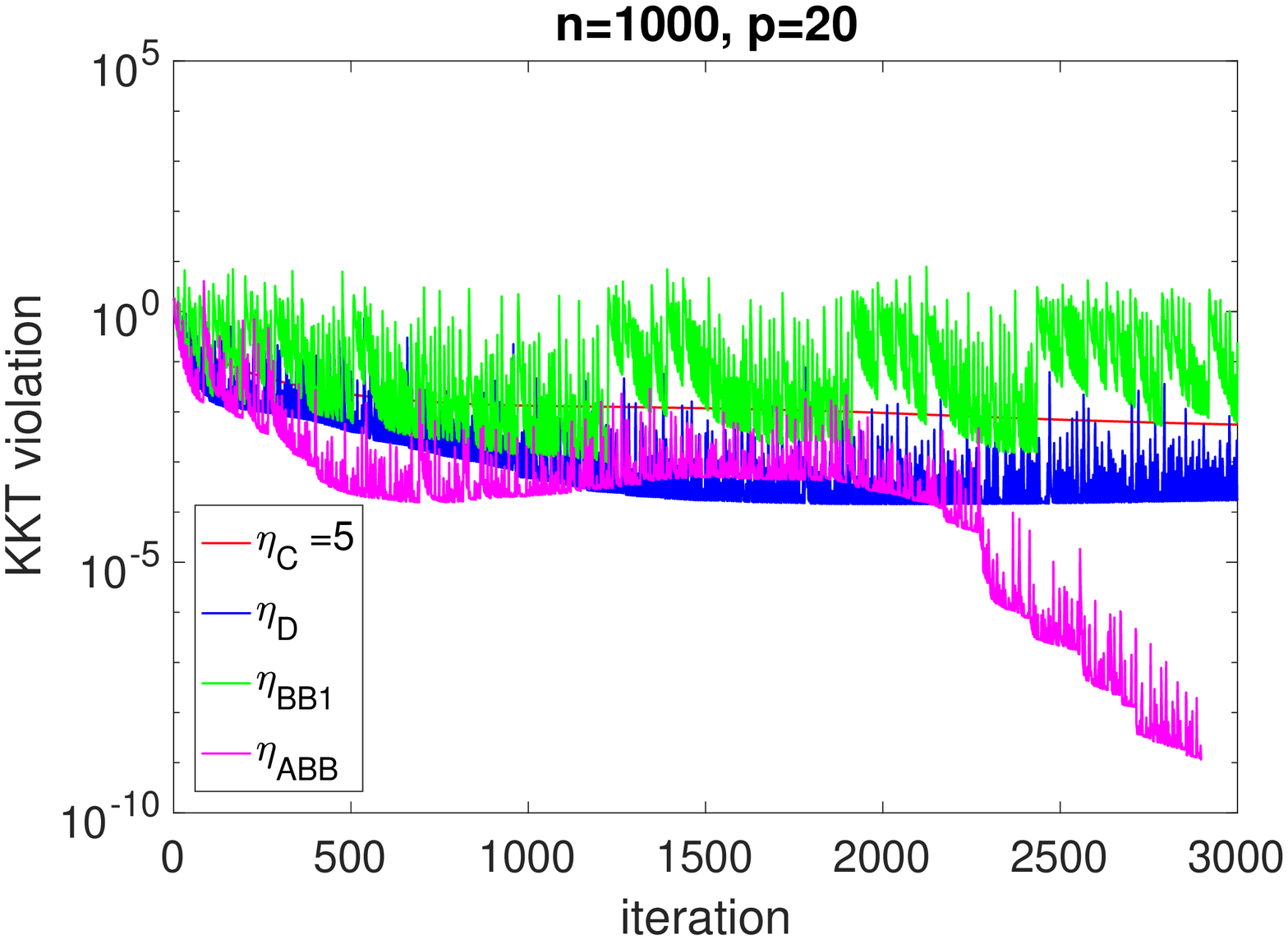}}	
	\caption{A comparison of KKT violation for PLAM (a)-(d) and PCAL (e)-(h) with different $\eta$ 
	($\beta=s+0.1$)}
	\label{fig:eta}
\end{figure}

%\clearpage
We next compare the performance among PLAM and PCAL variations corresponding
to different $\beta$. 
In the comparison, we set $\beta$ varying among $0, 0.01s, 0.1s, s+0.1, 10s+1$. 
%In order to scale the gap between $0, 0.01s, 0.1s, s+0.1, 10s+1$, we set a comparably large $s$ as our 
%initial guessing in this test. For instance, we choose $s=\norm{L}_2^2$ if $\norm{L}_2 >1$ for Problem 1, 
%otherwise $s=1/\norm{L}_2$. 
The proximal parameter is fixed as its default $\eta=\eta_{\mathrm{ABB}}$.
We present all the numerical results in Figure \ref{fig:beta}. 
%illustrates the KKT violation of PLAM and PCAL with different $\beta$. 
% demonstrates the results of PLAM, while corresponds to PCAL. 
%According to our current theorems, both PLAM and PCAL require $\beta$ to be larger than
%the upper bound of the 2-norm of the Hessian. 
We notice from subfigures (a)-(d) that PLAM with small $\beta$ might be divergent in some cases,
while large $\beta$ causes slow convergence. Therefore, a suitable chosen
$\beta$, often unreachable in practice, is necessary for good performance of PLAM.  
On the other hand, the dependence on $\beta$ of PCAL can be learnt from subfigures (e)-(h). 
The smaller $\beta$ for PCAL has
the better performance in some instances, and the behavior of PCAL 
is completely not sensitive to $\beta$ in other instances.  
To take more distinctive look at the difference between PLAM and PCAL,
we present a comparison in Figure \ref{fig:PCAL_beta_sensitive}.
Therefore, in practice, we suggest an approximation of $s$ to be the default $\beta$
of PLAM and $1$ for PCAL. Since it is easier to tune $\beta$ for PCAL than PLAM, we
choose PCAL to be the default algorithm of ours in Subsection \ref{section: parallel}.
%In a word, we observe that PLAM is more sensitive than PCAL with respect to $\beta$. How to choose $\beta$ 
%for PLAM and PCAL in different problems and settings is an important issue for our algorithms. We suggest 
%that in most of case, the penalty parameter can be set as a constant with respect to the scale of problem.
\begin{figure}[htbp]
	\centering
	\subfigure[Problem 1]
	{\includegraphics[width=0.245\textwidth,height=0.2\textwidth]
		{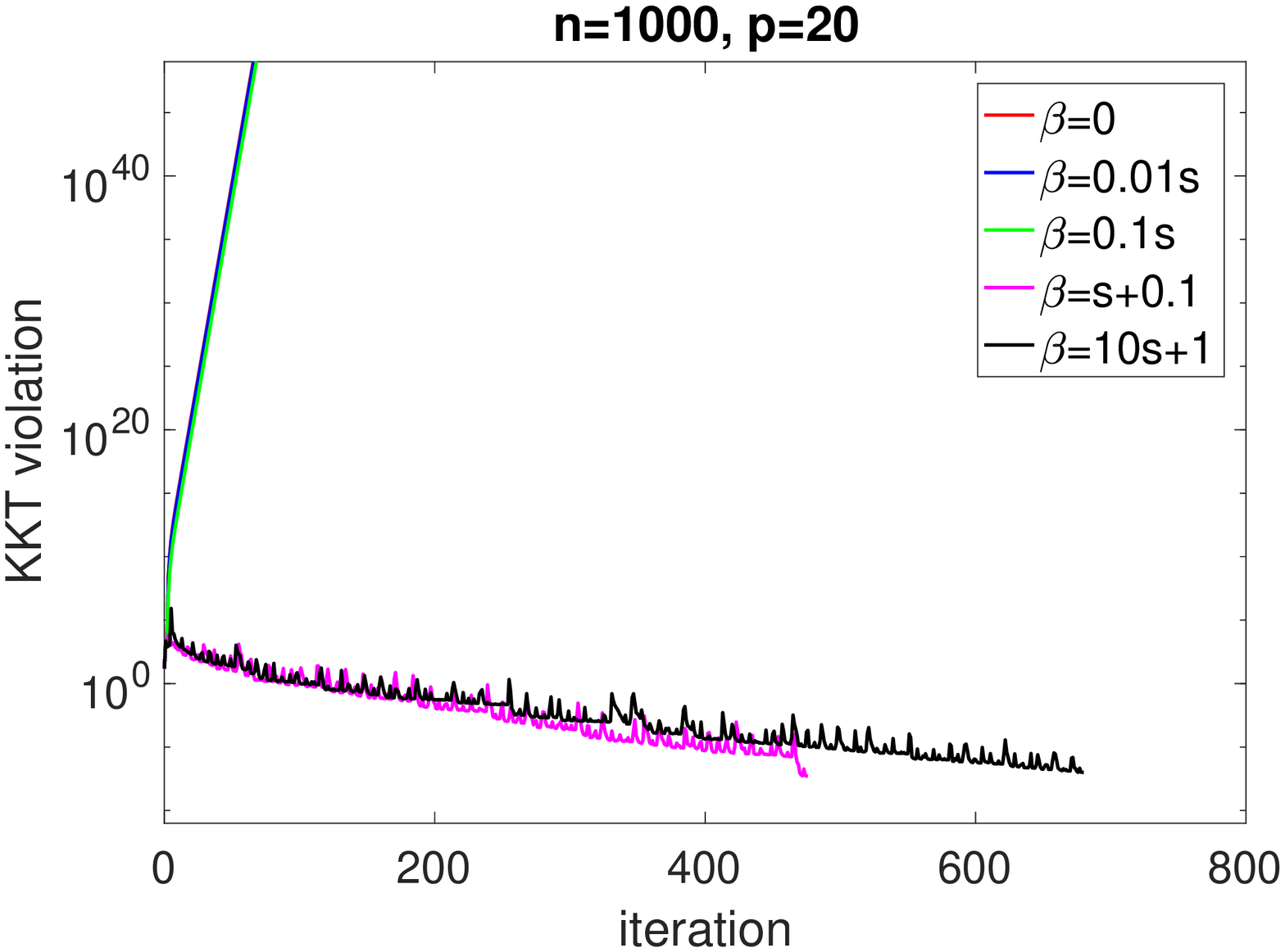}}
	\hfill
	\subfigure[Problem 2]
	{\includegraphics[width=0.245\textwidth,height=0.2\textwidth]
		{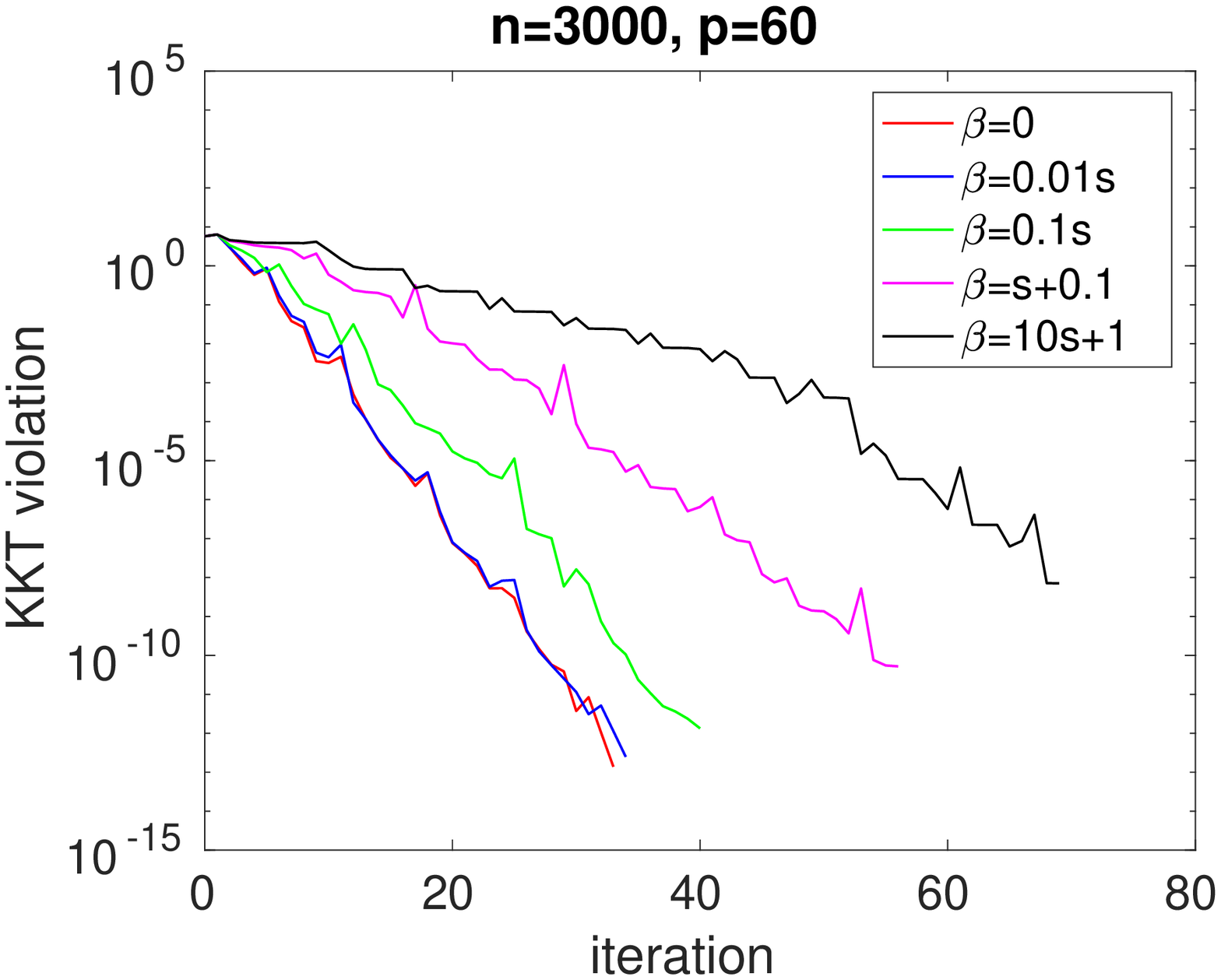}}
	\hfill
	\subfigure[Problem 3]
	{\includegraphics[width=0.245\textwidth,height=0.2\textwidth]
		{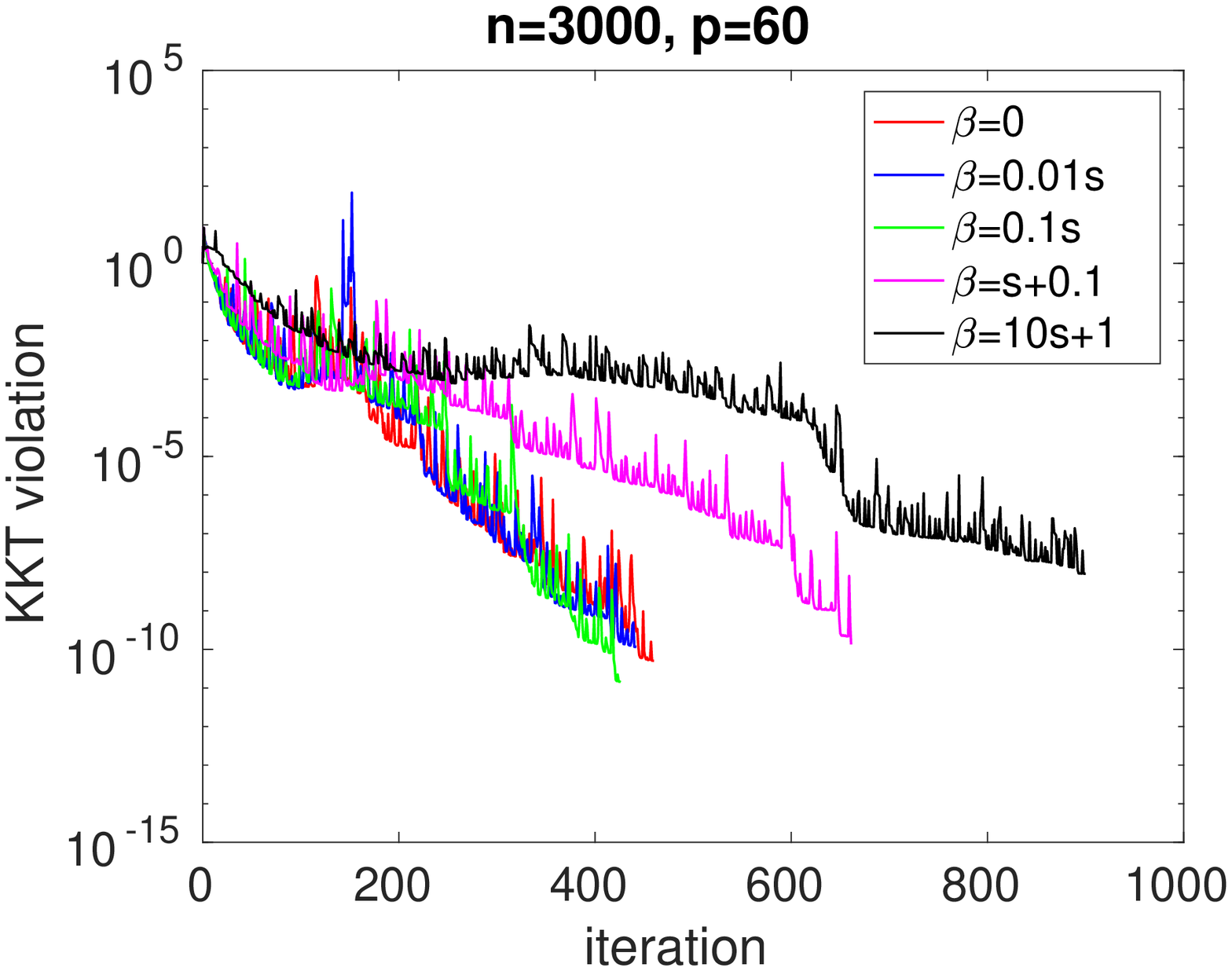}}
	\hfill
	\subfigure[Problem 4]
	{\includegraphics[width=0.245\textwidth,height=0.2\textwidth]
		{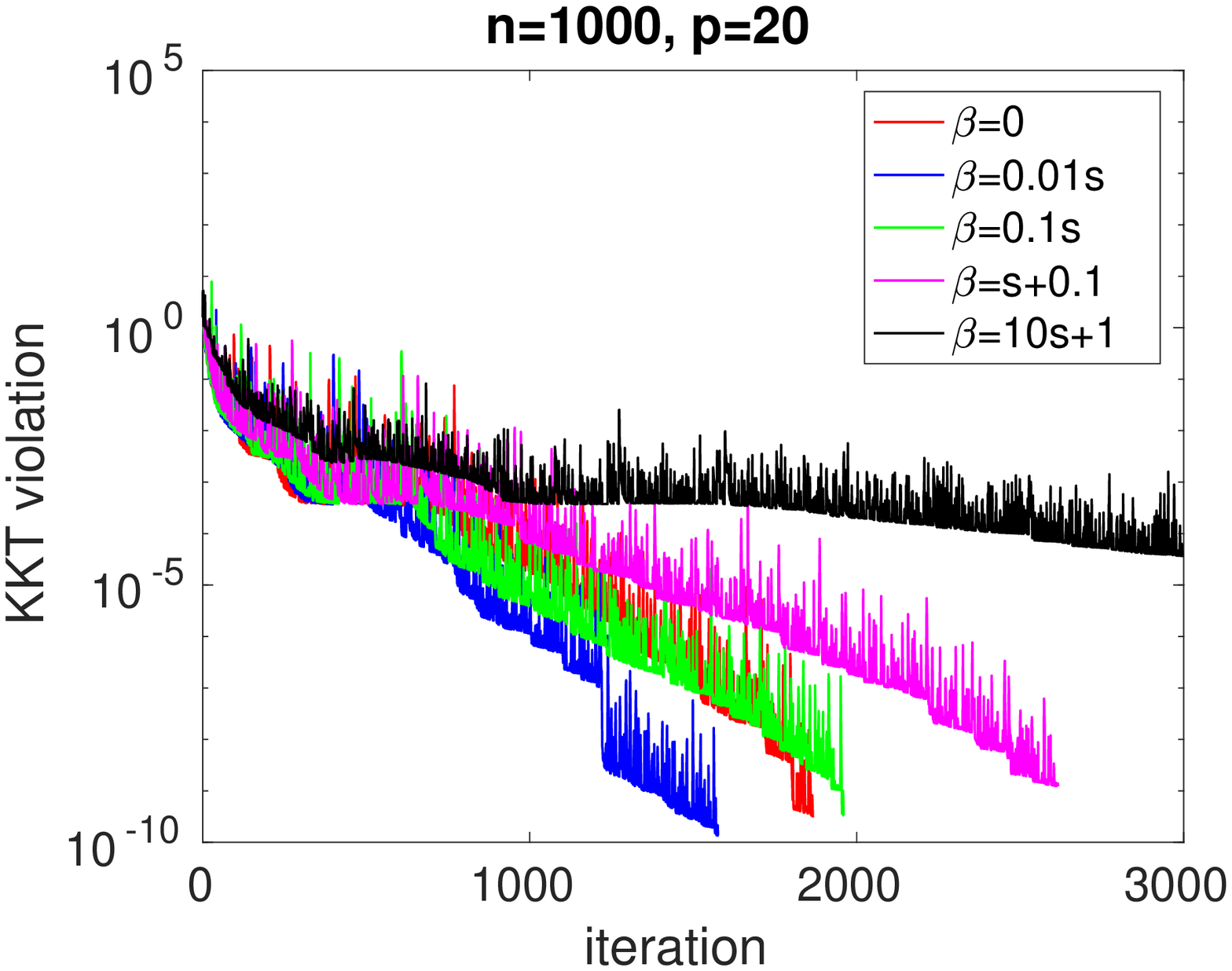}}
	
	\subfigure[Problem 1]
	{\includegraphics[width=0.245\textwidth,height=0.2\textwidth]
		{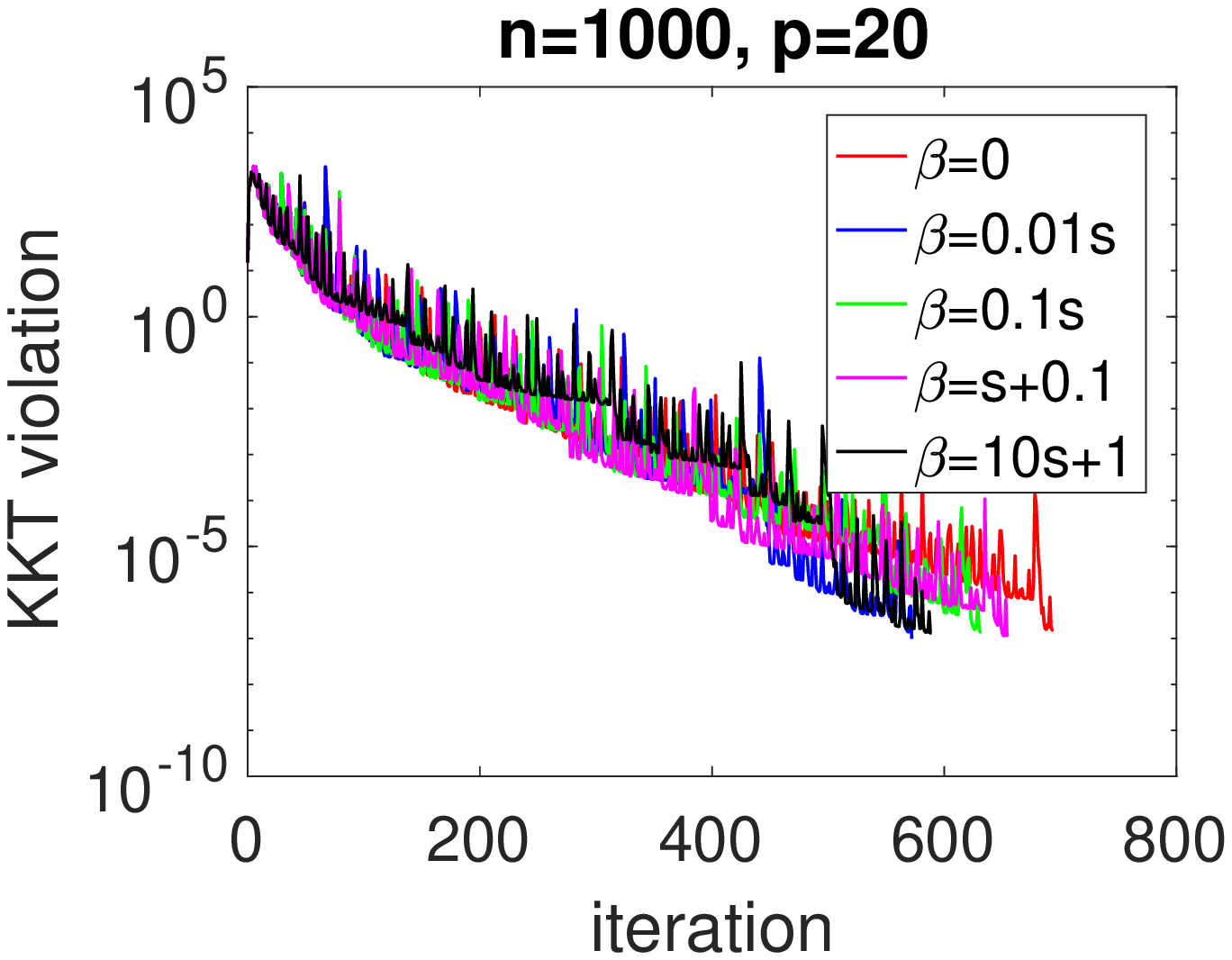}}
	\hfill
	\subfigure[Problem 2]
	{\includegraphics[width=0.245\textwidth,height=0.2\textwidth]
		{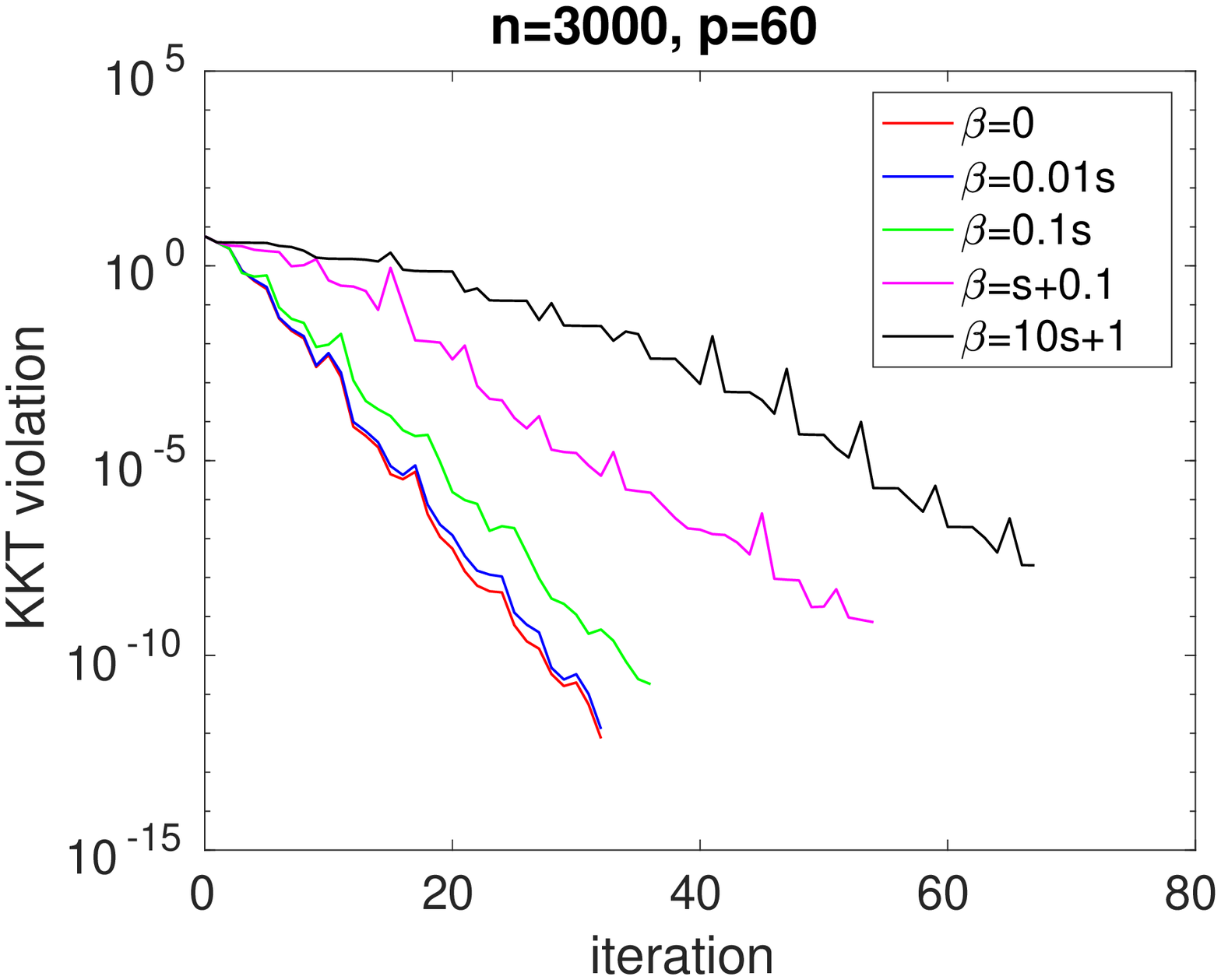}}
	\hfill
	\subfigure[Problem 3]
	{\includegraphics[width=0.245\textwidth,height=0.2\textwidth]
		{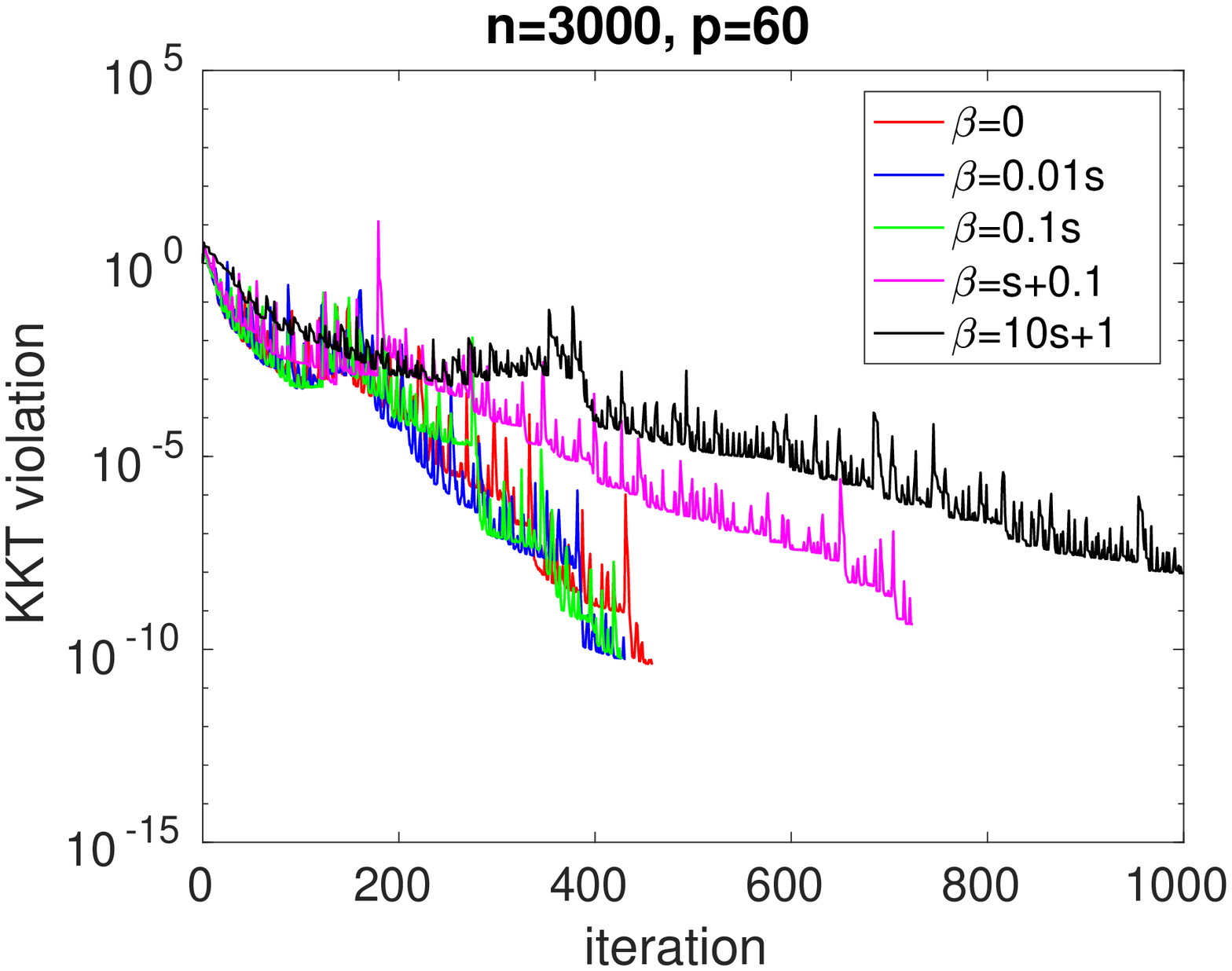}}
	\hfill
	\subfigure[Problem 4]
	{\includegraphics[width=0.245\textwidth,height=0.2\textwidth]
		{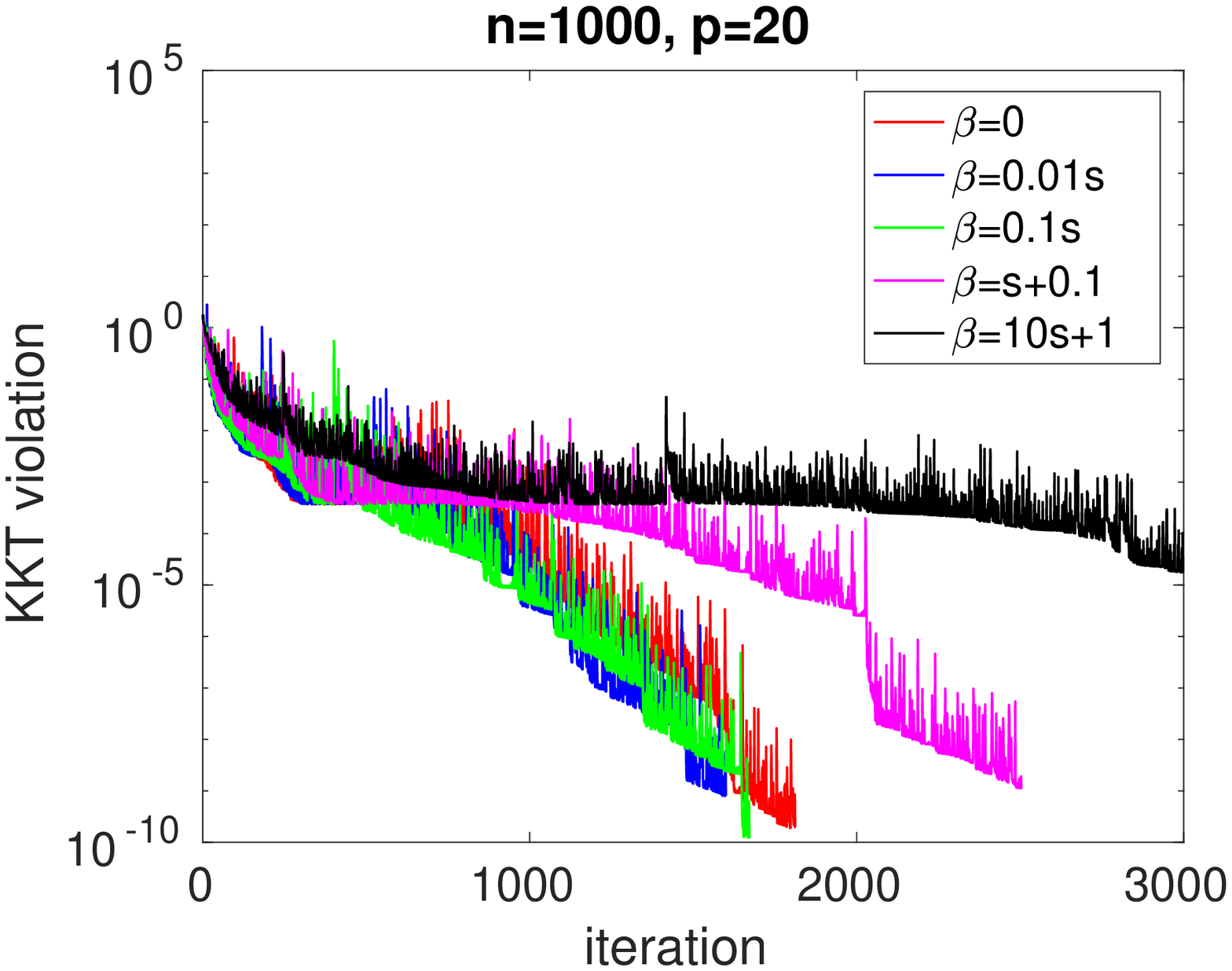}}
	\caption{A comparison of KKT violation for PLAM (a)-(d) and PCAL (e)-(h) with different $\beta$ ($\eta=\eta_{\mathrm{ABB}}$)}
	\label{fig:beta}
\end{figure}

\begin{figure}[htbp]
	\centering
	\subfigure[$\beta = 0.01s$]
	{\includegraphics[width=0.32\textwidth,height=0.25\textwidth]
		{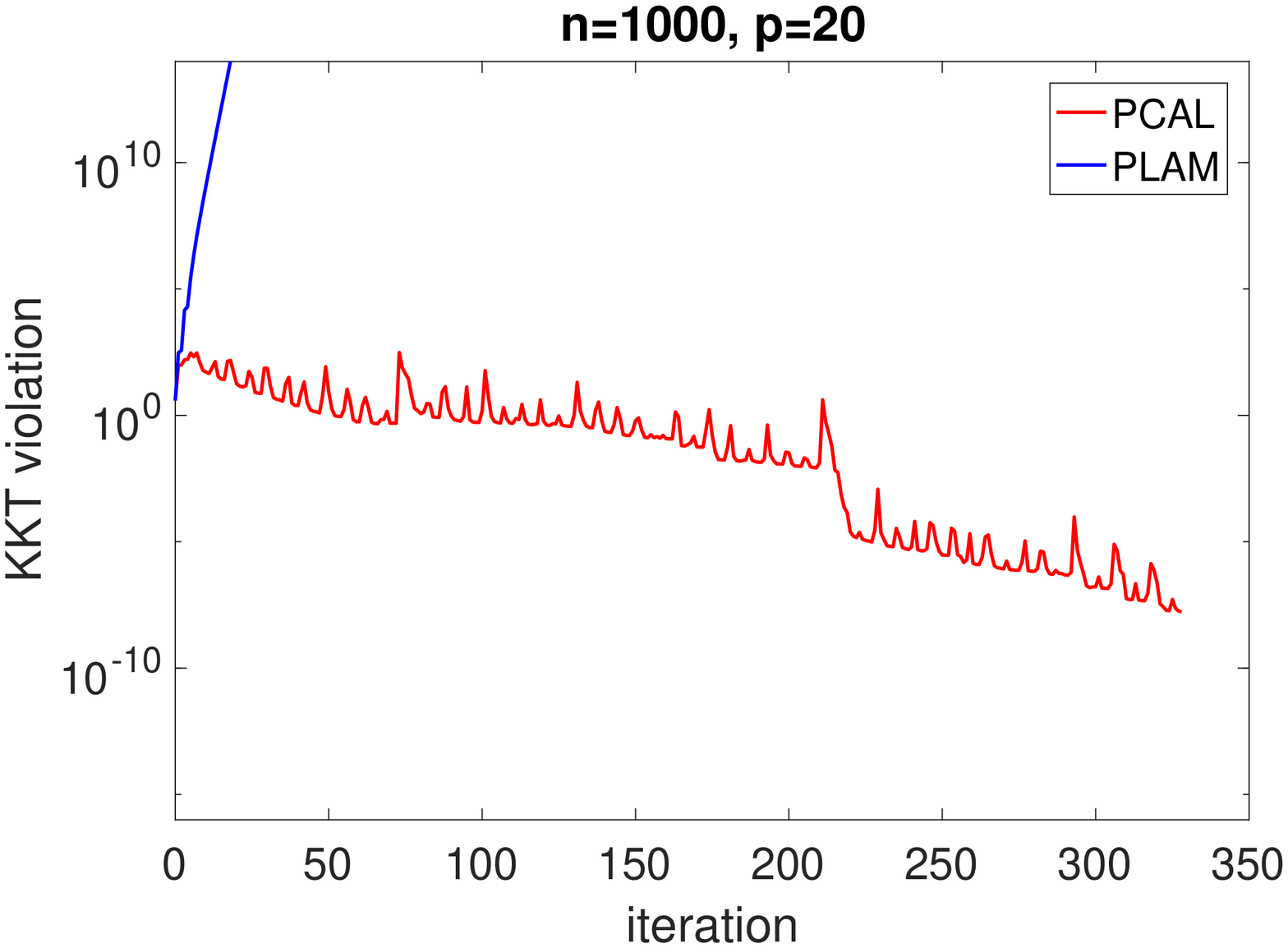}}
	\hfill
	\subfigure[$\beta = s+0.1$]
	{\includegraphics[width=0.32\textwidth,height=0.25\textwidth]
		{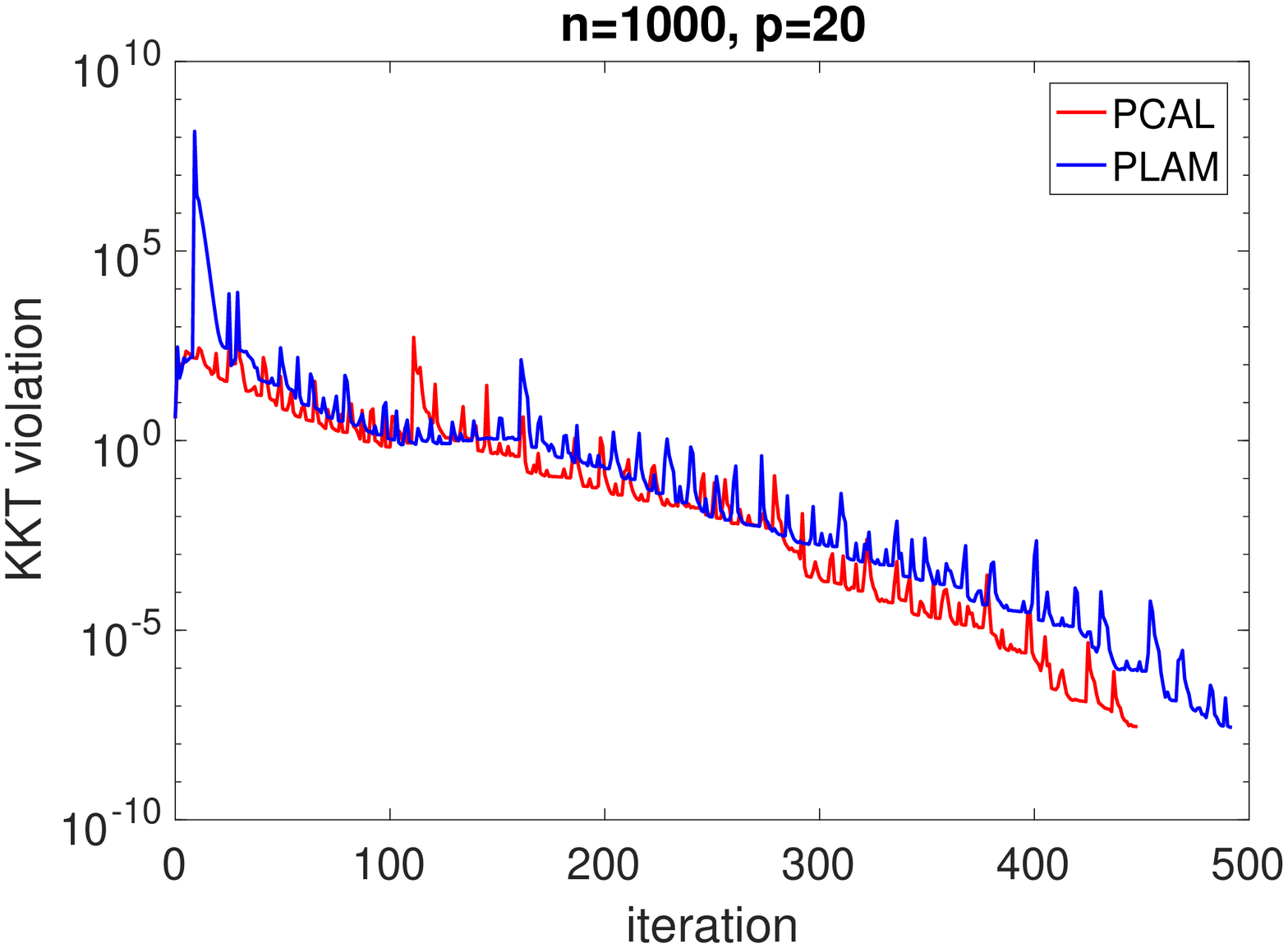}}
	\hfill
	\subfigure[$\beta = 10s+1$]
	{\includegraphics[width=0.32\textwidth,height=0.25\textwidth]
		{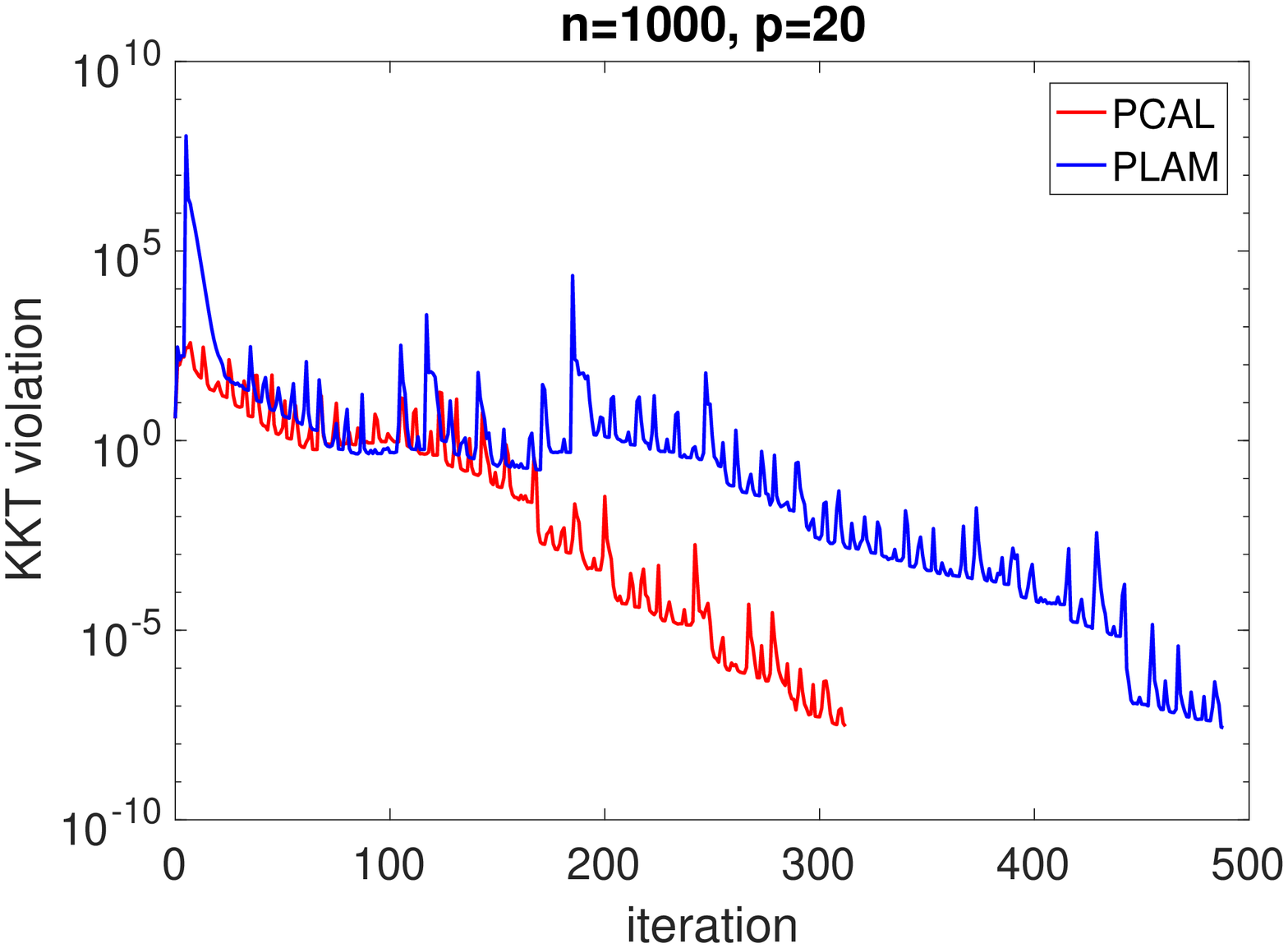}}
	\hfill
	
	\caption{A comparison between PLAM and PCAL with different $\beta$ on Problem 1}
	\label{fig:PCAL_beta_sensitive}
\end{figure}

%In fact, we also examine the performance of different combinations between $\beta$ and $\eta$. For 
%simplicity, we omit the results of this part. From our observation, we find that PLAM and PCAL with 
%$\eta_{\text{ABB}}$ and small $\beta$ have better performance than other combinations of $\beta$ and 
%$\eta$. For efficiency and robustness, we choose $\beta=s+0.1$ to be an estimation for 2-norm of  Hessian 
%at the solution for PLAM and choose $\beta=0.01s$ for PCAL.

%\clearpage
There are two distinctions between PLAM 
and ALM. Firstly, a gradient step takes the place of 
solving the subproblem to some given precision in the update of the prime variables.
Secondly, a closed-form expression is used to update the Lagrangian multipliers in stead
of dual ascend. In order to show that the new update formula for multipliers 
is a crucial fact of the efficiency of PLAM and PCAL, we compare PLAM and PCAL
with PLAM-DA and PCAL-DA, respectively. Here PLAM-DA and PCAL-DA stand for Algorithm \ref{alg:PLAM} and 
\ref{alg:PCAL} with Step 3 using dual ascend to update the multipliers, respectively.
We report the numerical results in Figure \ref{fig:PCAL_multiplier}.
It can be observed that the closed-form expression for updating
Lagrangian multipliers is superior to dual ascend in solving optimization 
problems with orthogonality constraints.
%These observations do not contradict to the well known theories about augmented Lagrangian method, because 
%here we do not solve the subproblem exactly but only employ one gradient step and we do not enlarge the 
%parameter $\beta$ but fix it. Moreover, even in the convergent cases, PLAM and PCAL performance better 
%than their versions with different multipliers. We will choose the original update formulas described in 
%Algorithm \ref{alg:PLAM} and \ref{alg:PCAL} to update the multipliers.
\begin{figure}[htb]
	\centering
	\subfigure[Problem 1]
	{\includegraphics[scale=.30]
		{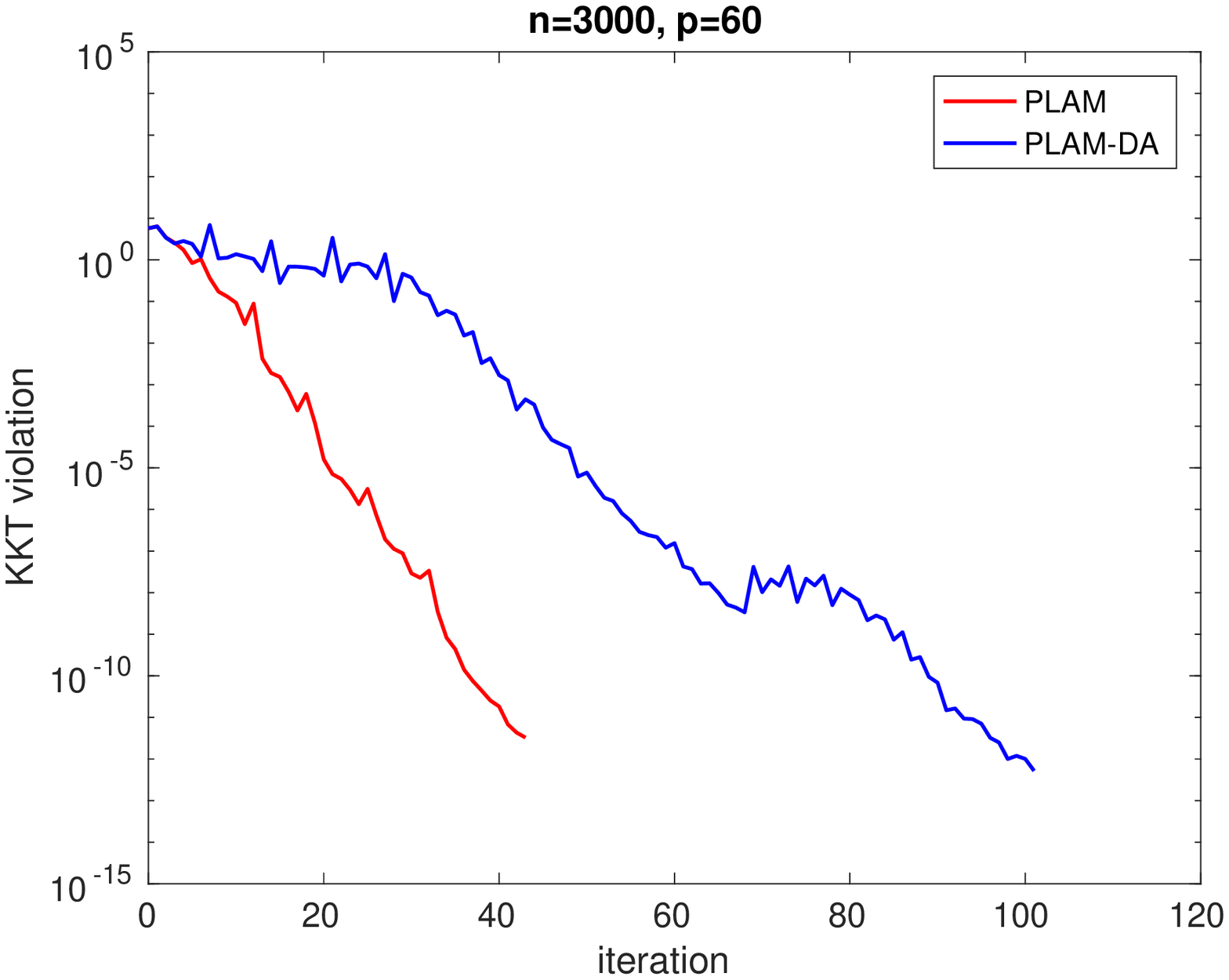}}
	\qquad
	\subfigure[Problem 2]
	{\includegraphics[scale=.30]
		{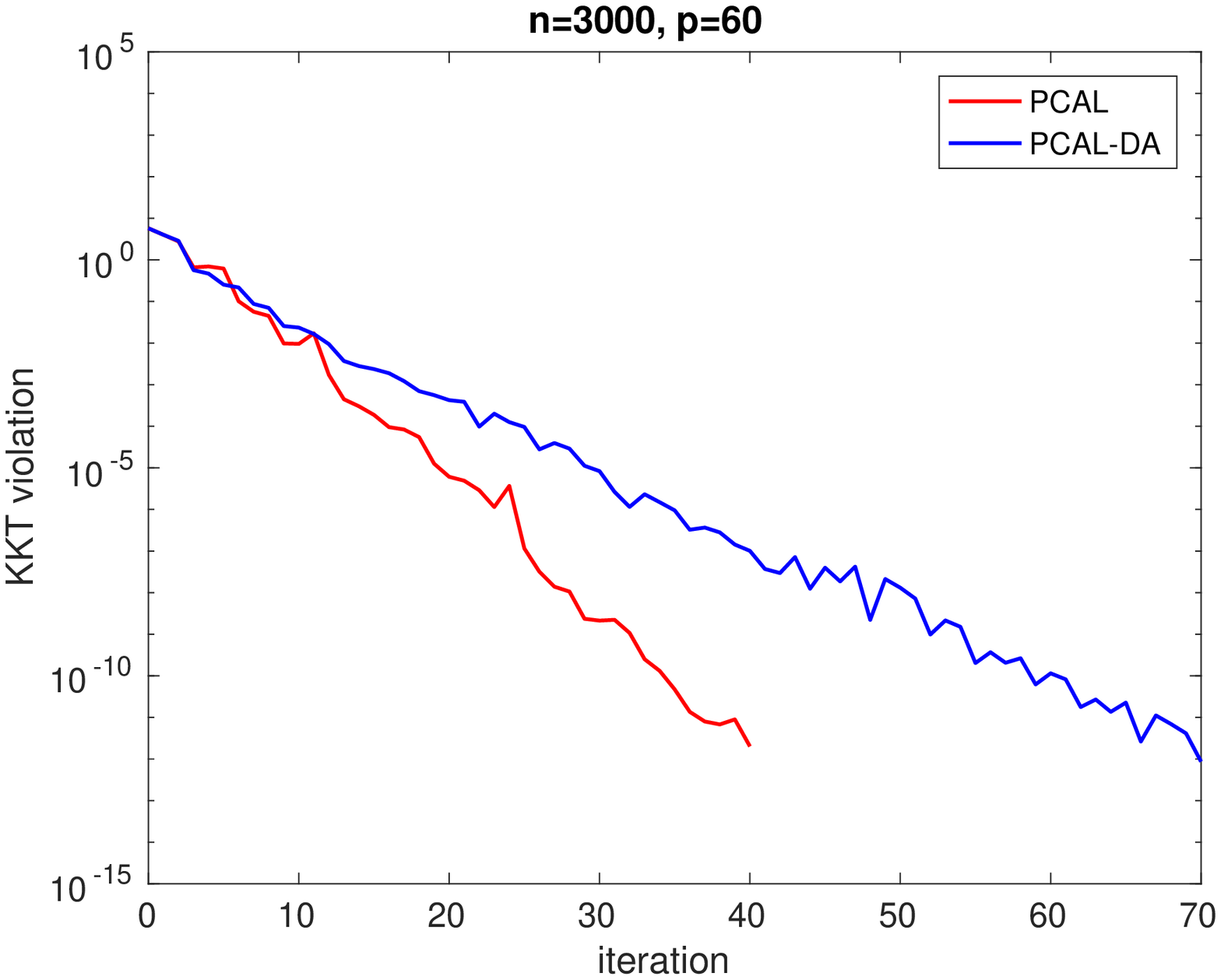}}
	\caption{A comparion bewteen PLAM and PCAL on multilplier}
	\label{fig:PCAL_multiplier}
\end{figure}

%\clearpage
In the end of this subsection, we show how KKT and feasibility violations decay 
in the iterations, when PLAM and PCAL are used to 
solve Problem 1. The numerical results are presented in Figure \ref{fig:feasibility}.
We notice that the decay of feasibility violations is nonmonotone and
has a similar variation tendency as KKT violations, which 
coincides our theoretical analysis Lemma \ref{lm:2}. 
If we want a high accuracy for the feasibility
but a mild one for KKT conditions, we can set a mild tolerance for 
KKT violation and impose an orthonormalization step as a post process
when we obtain the last iterate by PLAM or PCAL.
Table \ref{tab:orth step} illustrates that such post process 
does not affect the KKT violation, but do improve the feasibility.
Here, "stop" and "orth" represent the relative values at the last iterate 
and the one after post process, respectively. 
Hereinafter, the orthonormalization post process, 
achieved by an internal function {\ttfamily qr$(\cdot)$} assembled in MATLAB,
 is the default last step of PLAM and PCAL.
\begin{figure}[htb]
	\centering
	\subfigure[KKT violation]
	{\includegraphics[scale=.30]
		{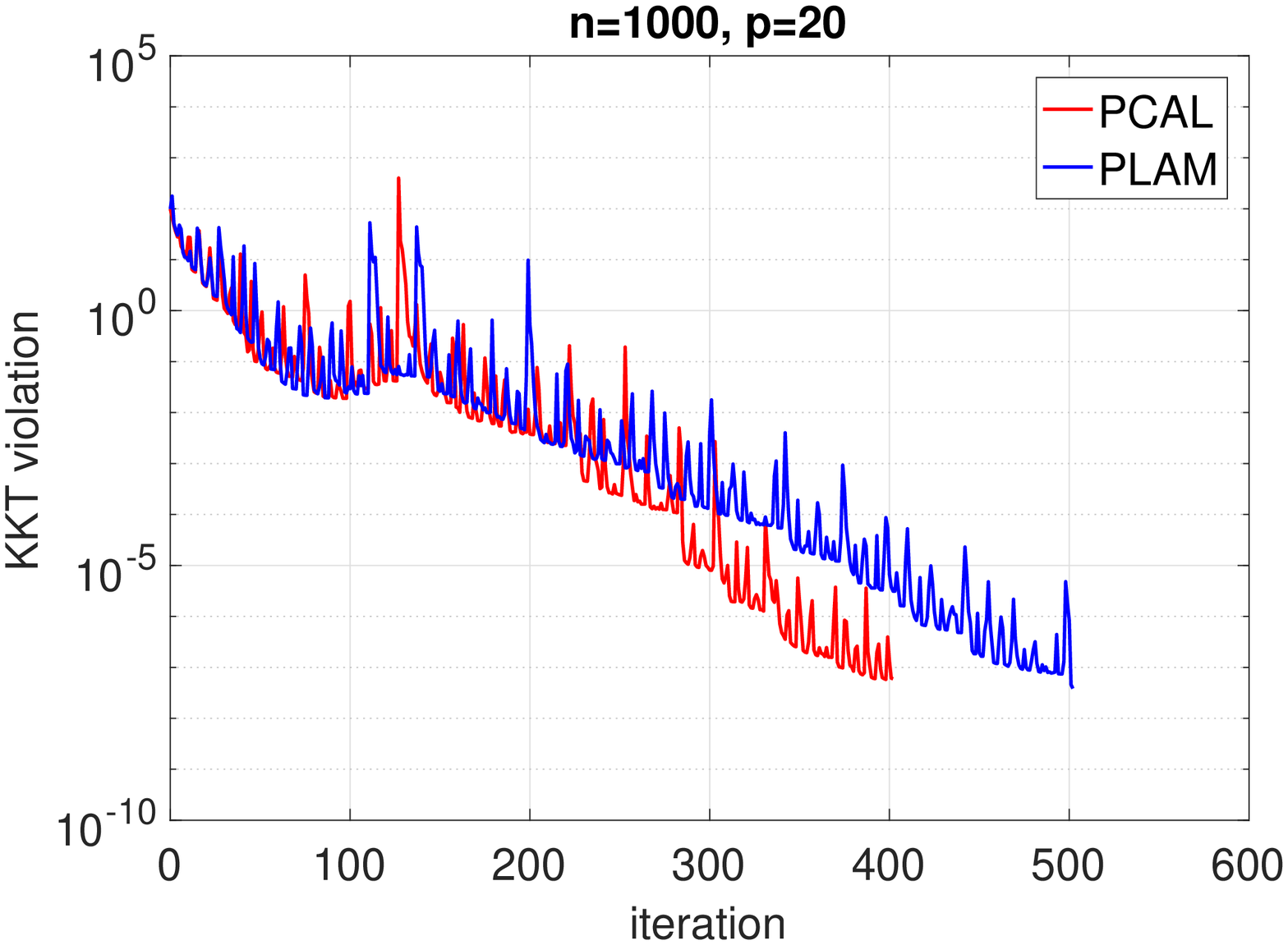}}
	\qquad
	\subfigure[Feasibility violation]
	{\includegraphics[scale=.30]
		{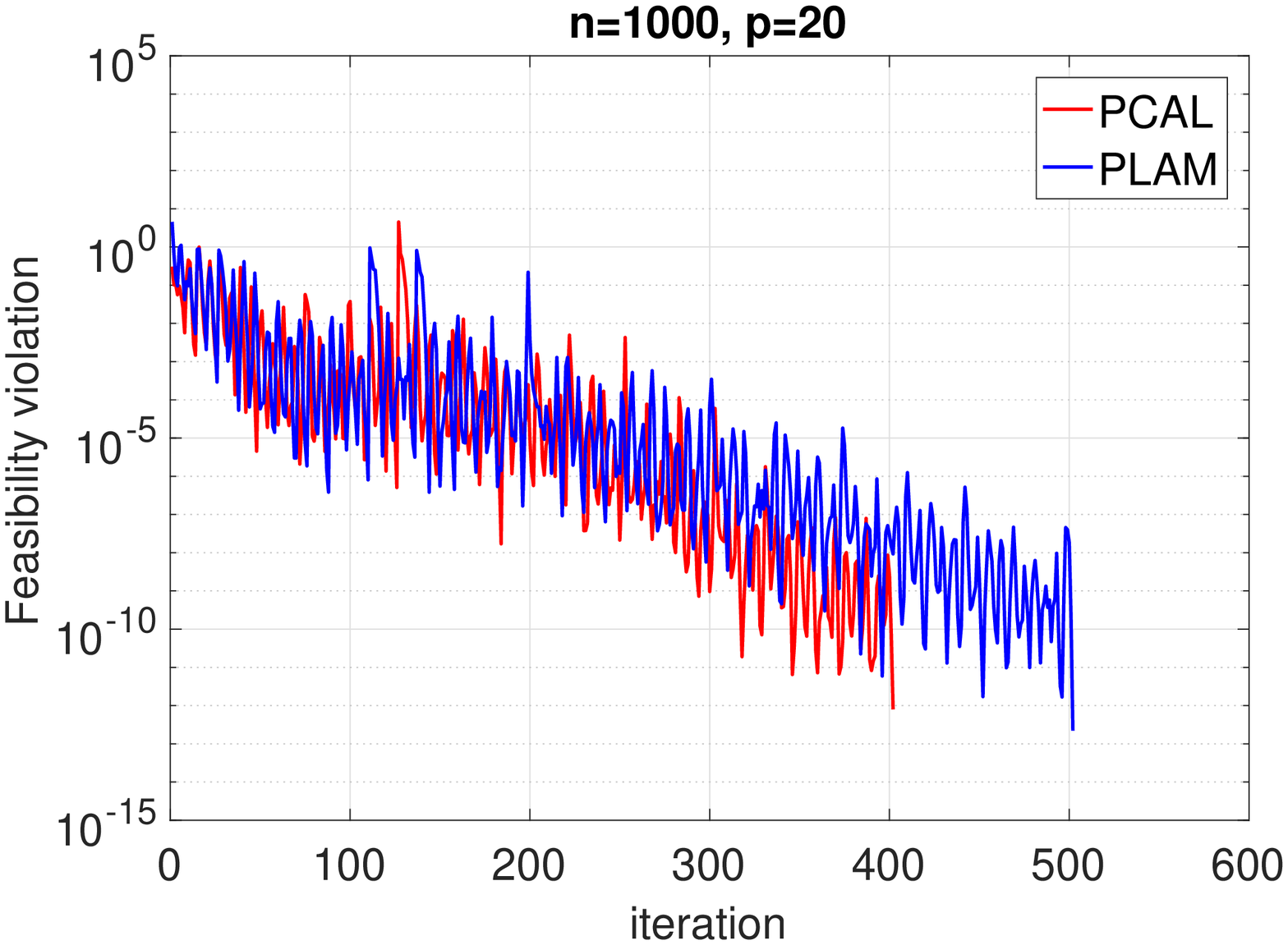}}
	\caption{The results of KKT and Feasibility violation for PLAM and PCAL on Problem 1} 
	\label{fig:feasibility}
\end{figure}

\begin{table}[htbp]
	%	\scriptsize
	\centering
	\begin{tabular}{c|c||ccc}
		\hline
		\toprule[.3mm]
		\multicolumn{2}{c}{Solver} &   {Function value}  &   {KKT violation}   & {Feasibility violation}    \\
		\midrule[.2mm]
		\multicolumn{1}{c}{} &\multicolumn{1}{c}{}& \multicolumn{3}{c}{$n=1000$, $p=20$, $\alpha=1$} \\\midrule

		\multirow{2}{*}{PLAM} & stop & -4.205530767124e+02 & 8.74e-06 & 2.56e-09  \\								
		& orth & -4.205530767662e+02 &  8.74e-06 & 5.61e-15  \\\midrule

		\multirow{2}{*}{PCAL} & stop & -4.205530767773e+02 & 6.01e-06 & 1.13e-08  \\				
		& orth & -4.205530767665e+02 &  6.00e-06 & 2.00e-14
		\\\bottomrule[.3mm]
	\end{tabular}
	\caption{The results of orthogonal step for PLAM and PCAL on Problem 1\label{tab:orth step}} 
\end{table}

%\clearpage
\subsection{Kohn-Sham Total Energy Minimization\label{subsection:KSDFT}}
In this subsection, we compare PLAM and PCAL with the state-of-the-art solvers in solving Kohn-Sham total 
energy minimization \eqref{eq:KSDFT} in serial.
In other word, we aim to investigate the numerical performance of two proposed infeasible 
algorithms as general solvers for optimization problems with orthogonality constraints
without consideration of parallelization. 

Our test is based on KSSOLV\footnote{Available from 
\href{http://crd-legacy.lbl.gov/~chao/KSSOLV/}{http://crd-legacy.lbl.gov/$\sim$chao/KSSOLV/}} 
\cite{Yang09}, which is a MATLAB toolbox for electronic structure calculation. It allows researchers to 
investigate their own algorithms easily and friendly for different steps in electronic structure 
calculation. We choose two integrated solvers in KSSOLV.
One is the self-consistent field (SCF) iteration, which
minimize a quadratic surrogate of the objective of \eqref{eq:KSDFT}
with orthogonality constraints in each iteration \cite{Liu2014}.
SCF and its variations are the most widely used in real KSDFT calculation.
The other one is called trust-region direct constrained minimization (TRDCM)
\cite{Yang2007}, which combines the trust-region framework
and SCF to solve the subproblem.
Besides SCF and TRDCM,
which are particularly for KSDFT, 
we also pick up two state-of-the-art solvers in solving general optimization
problems with orthogonality constraints. 
One is OptM\footnote{Available from \href{http://optman.blogs.rice.edu}{http://optman.blogs.rice.edu}}, 
which is based on the algorithm proposed in \cite{WenYin2013}. OptM adopts
Cayley transform to preserve the feasibility on the Stiefel manifold in each iteration. 
%Orthonormalization is implicitly achieved by 
%solving a $2p$-by-$2p$ linear system in each step. 
Nonmonotone line search with BB stepsize is the default setting in OptM. 
Another existing solver for comparison, we intend to choose MOptQR,
which is based on a projection-like retraction method introduced in \cite{Abisil2008}. 
Its original version is MOptQR-LS (manifold QR method with line search\footnote{Available from 
	\href{http://www.manopt.org}{http://www.manopt.org}}). 
%Since BB stepsize can significantly 
%improve the performance of line search based approaches, 
For fair comparison, we implement the same alternating BB stepsize strategy as PLAM and PCAL to MOptQR-LS,
and form the MOptQR used in this section.
%In each step, MOptQR needs to calculate a QR factorization. 

We select 18 testing problems with respect to different molecules, which are assembled in KSSOLV. 
For all the methods, the stopping criterion is set as $\norm{(I_n-XX\zz)H(X)X}\ff<10^{-5}$. And we set the 
max iteration number $\text{MaxIter}=200$ for methods SCF and TRDCM, while MOptQR, OptM, PLAM and PCAL set 
their max iteration number with $\text{MaxIter}=1000$ to get a comparable solution with other methods. 
The penalty parameter $\beta_{\text{PLAM}}$ for PLAM is tuned case by case to achieve a good performance. 
Meanwhile, $\beta_{\text{PCAL}}$ for PCAL is always set as $1$.
Other parameters for all these methods take their default values. 
For all of the testing algorithms, we set the same initial guess $X^0$ by using the function 
``{\ttfamily getX0}", which is provided by KSSOLV. The numerical results are illustrated 
in Tables \ref{tab:KS}, \ref{tab:KS1} and \ref{tab:KS2}. 

Here, ``$E_{tot}$" represents the total energy function value, and ``KKT violation", ``Iteration", 
``Feasibility violation" and ``Time(s)" stand for $\norm{(I_n-XX\zz)H(X)X}\ff$, the number of iteration, 
$\norm{X\zz X -I_p}\ff$ and the total running wall-clock time in second, respectively. 
From the tables, we observe that PCAL has a better performance than other algorithms, and in most cases, it 
obtains a comparable total energy function value and a lower KKT violation. 
In particular, in the large size problem ``graphene30", PCAL achieves the same total energy function value 
and same magnitude KKT violation in much less CPU time than others. In the problem ``qdot", we observe that 
only PLAM and PCAL can output a point satisfying the KKT violation tolerance,
while all the other algorithms terminate abnormally.
Therefore, we can conclude that PCAL and PLAM perform comparable with the existent 
feasible algorithms in solving discretized Kohn-Sham total energy minimization.	

\begin{table}[htbp]
	%	\scriptsize
	\centering
	\begin{tabular}{c||ccccc}
		\hline
		\toprule[.4mm]
		Solver	&   {$E_{tot}$}  &   {KKT violation}   & {Iteration}  & {Feasibility violation} & {Time(s)}   \\
		\midrule[.3mm]
		
		\multicolumn{1}{c}{} & \multicolumn{5}{c}{al, $n=16879$, $p=12$ \qquad\qquad($\beta_{\text{PLAM}}=10, \beta_{\text{PCAL}}=1$)} \\\midrule 
		SCF 	 & 	 -1.5789379003e+01 & 	 4.88e-03 & 	 200  & 	 6.53e-15 & 	 539.51 \\ 
		TRDCM 	 & 	 -1.5803791151e+01 & 	 6.36e-06 & 	 154  & 	 4.94e-15 & 	 336.79 \\ 
		MOptQR 	 & 	 -1.5803814080e+01 & 	 1.88e-04 & 	 1000  & 	 1.33e-14 & 	 393.54 \\ 
		OptM 	 & 	 -1.5803791098e+01 & 	 2.38e-05 & 	 1000  & 	 3.19e-14 & 	 378.80 \\
		PLAM 	 & 	 -1.5803790675e+01 & 	 1.29e-05 & 	 1000  & 	 3.34e-07 & 	 399.80 \\  
		PCAL 	 & 	 -1.5803791055e+01 & 	 8.96e-06 & 	 596  & 	 5.95e-15 & 	 228.06 \\  
		\midrule   
		
		\multicolumn{1}{c}{} & \multicolumn{5}{c}{alanine, $n=12671$, $p=18$\qquad\qquad($\beta_{\text{PLAM}}=13, \beta_{\text{PCAL}}=1$)} \\\midrule 
		SCF 		 & 	 -6.1161921212e+01 & 	 3.80e-07 & 	 13  & 	 7.20e-15 & 	 21.46 \\ 
		TRDCM 	 & 	 -6.1161921213e+01 & 	 6.02e-06 & 	 15  & 	 5.20e-15 & 	 16.97 \\ 
		MOptQR 	 & 	 -6.1161921213e+01 & 	 7.52e-06 & 	 64  & 	 6.77e-15 & 	 14.89 \\ 
		OptM 	 & 	 -6.1161921213e+01 & 	 2.27e-06 & 	 69  & 	 4.03e-14 & 	 16.44 \\ 
		PLAM 	 & 	 -6.1161921212e+01 & 	 9.50e-06 & 	 76  & 	 7.90e-15 & 	 17.14 \\  
		PCAL 	 & 	 -6.1161921213e+01 & 	 4.14e-06 & 	 61  & 	 7.19e-15 & 	 15.89 \\\midrule 
		
		\multicolumn{1}{c}{} & \multicolumn{5}{c}{benzene, $n=8407$, $p=15$\qquad\qquad($\beta_{\text{PLAM}}=10, \beta_{\text{PCAL}}=1$)} \\\midrule 
		SCF 		 & 	 -3.7225751349e+01 & 	 2.10e-07 & 	 10  & 	 7.82e-15 & 	 10.07 \\ 
		TRDCM 	 & 	 -3.7225751363e+01 & 	 9.23e-06 & 	 15  & 	 7.12e-15 & 	 9.83 \\ 
		MOptQR 	 & 	 -3.7225751362e+01 & 	 8.12e-06 & 	 146  & 	 7.24e-15 & 	 19.91 \\ 
		OptM 	 & 	 -3.7225751363e+01 & 	 2.50e-06 & 	 70  & 	 1.54e-14 & 	 9.61 \\ 
		PLAM 	 & 	 -3.7225751362e+01 & 	 9.37e-06 & 	 71  & 	 4.62e-15 & 	 9.55 \\  
		PCAL 	 & 	 -3.7225751362e+01 & 	 9.22e-06 & 	 50  & 	 5.15e-15 & 	 7.74 \\\midrule   
		
		\multicolumn{1}{c}{} & \multicolumn{5}{c}{c2h6, $n=2103$, $p=7$\qquad\qquad($\beta_{\text{PLAM}}=10, \beta_{\text{PCAL}}=1$)} \\\midrule 
		SCF 	 & 	 -1.4420491315e+01 & 	 3.70e-09 & 	 10  & 	 3.66e-15 & 	 3.40 \\ 
		TRDCM 	 & 	 -1.4420491322e+01 & 	 8.75e-06 & 	 13  & 	 2.76e-15 & 	 4.01 \\ 
		MOptQR 	 & 	 -1.4420491321e+01 & 	 8.59e-06 & 	 47  & 	 2.58e-15 & 	 2.57 \\ 
		OptM 	 & 	 -1.4420491322e+01 & 	 2.62e-06 & 	 55  & 	 1.18e-14 & 	 2.87 \\ 
		PLAM 	 & 	 -1.4420491322e+01 & 	 7.91e-06 & 	 69  & 	 2.92e-15 & 	 3.41 \\
		PCAL 	 & 	 -1.4420491322e+01 & 	 4.91e-06 & 	 45  & 	 2.33e-15 & 	 2.58 \\  
		\midrule   
		
		\multicolumn{1}{c}{} & \multicolumn{5}{c}{c12h26, $n=5709$, $p=37$\qquad\qquad($\beta_{\text{PLAM}}=10, \beta_{\text{PCAL}}=1$)} \\\midrule 
		SCF 	 & 	 -8.1536091894e+01 & 	 4.95e-08 & 	 14  & 	 1.40e-14 & 	 30.08 \\ 
		TRDCM 	 & 	 -8.1536091937e+01 & 	 4.84e-06 & 	 16  & 	 1.17e-14 & 	 21.77 \\ 
		MOptQR 	 & 	 -8.1536091936e+01 & 	 6.68e-06 & 	 147  & 	 1.43e-14 & 	 39.57 \\ 
		OptM 	 & 	 -8.1536091937e+01 & 	 1.07e-06 & 	 83  & 	 7.10e-14 & 	 22.65 \\
		PLAM 	 & 	 -8.1536091936e+01 & 	 5.88e-06 & 	 96  & 	 1.55e-14 & 	 25.11 \\
		PCAL 	 & 	 -8.1536091936e+01 & 	 8.75e-06 & 	 70  & 	 1.45e-14 & 	 22.88 \\  
		\midrule  
		
		\multicolumn{1}{c}{} & \multicolumn{5}{c}{co2, $n=2103$, $p=8$\qquad\qquad($\beta_{\text{PLAM}}=10, \beta_{\text{PCAL}}=1$)} \\\midrule 
		SCF 		 & 	 -3.5124395789e+01 & 	 6.17e-08 & 	 10  & 	 2.53e-15 & 	 2.61 \\ 
		TRDCM 	 & 	 -3.5124395801e+01 & 	 4.14e-06 & 	 14  & 	 4.11e-15 & 	 2.09 \\ 
		MOptQR 	 & 	 -3.5124395800e+01 & 	 9.30e-06 & 	 88  & 	 2.35e-15 & 	 2.90 \\ 
		OptM 	 & 	 -3.5124395801e+01 & 	 1.70e-06 & 	 48  & 	 3.55e-14 & 	 1.68 \\ 
		PLAM 	 & 	 -3.5124395801e+01 & 	 7.92e-06 & 	 57  & 	 2.30e-15 & 	 1.84 \\  
		PCAL 	 & 	 -3.5124395801e+01 & 	 9.15e-06 & 	 43  & 	 2.11e-15 & 	 1.74 \\
		\bottomrule[.4mm]
	\end{tabular}
	\caption{The results in Kohn-Sham total energy minimization\label{tab:KS}} 
\end{table} 

\begin{table}[htbp]
	%			\scriptsize
	\centering
	\begin{tabular}{c||ccccc}
		\hline
		\toprule[.4mm]
		Solver	&   {$E_{tot}$}  &   {KKT violation}   & {Iteration}  &{Feasibility violation} & {Time(s)}   \\
		\midrule[.3mm]
		
		\multicolumn{1}{c}{} & \multicolumn{5}{c}{ctube661, $n=12599$, $p=48$\qquad\qquad($\beta_{\text{PLAM}}=13, \beta_{\text{PCAL}}=1$)} \\\midrule 
		SCF 	 & 	 -1.3463843175e+02 & 	 3.88e-07 & 	 11  & 	 1.43e-14 & 	 56.43 \\ 
		TRDCM 	 & 	 -1.3463843176e+02 & 	 6.85e-06 & 	 23  & 	 1.09e-14 & 	 87.41 \\ 
		MOptQR 	 & 	 -1.3463843176e+02 & 	 7.21e-06 & 	 152  & 	 1.78e-14 & 	 107.62 \\ 
		OptM 	 & 	 -1.3463843176e+02 & 	 2.35e-06 & 	 82  & 	 2.15e-14 & 	 59.23 \\ 
		PLAM 	 & 	 -1.3463843176e+02 & 	 4.34e-06 & 	 107  & 	 2.37e-14 & 	 72.18 \\
		PCAL 	 & 	 -1.3463843176e+02 & 	 9.68e-06 & 	 65  & 	 1.95e-14 & 	 54.07 \\  
		\midrule 
		
		\multicolumn{1}{c}{} & \multicolumn{5}{c}{glutamine, $n=16517$, $p=29$\qquad\qquad($\beta_{\text{PLAM}}=13, \beta_{\text{PCAL}}=1$)} \\\midrule 
		SCF 		 & 	 -9.1839425202e+01 & 	 1.12e-07 & 	 15  & 	 1.07e-14 & 	 67.40 \\ 
		TRDCM 	 & 	 -9.1839425244e+01 & 	 3.23e-06 & 	 16  & 	 7.00e-15 & 	 54.65 \\ 
		MOptQR 	 & 	 -9.1839425243e+01 & 	 9.83e-06 & 	 78  & 	 9.07e-15 & 	 51.46 \\ 
		OptM 	 & 	 -9.1839425244e+01 & 	 2.47e-06 & 	 87  & 	 9.73e-15 & 	 57.65 \\ 
		PLAM 	 & 	 -9.1839425243e+01 & 	 8.72e-06 & 	 104  & 	 9.26e-15 & 	 66.31 \\  
		PCAL 	 & 	 -9.1839425243e+01 & 	 6.28e-06 & 	 74  & 	 9.33e-15 & 	 53.53 \\\midrule  
		
		\multicolumn{1}{c}{} & \multicolumn{5}{c}{graphene16, $n=3071$, $p=37$\qquad\qquad($\beta_{\text{PLAM}}=10, \beta_{\text{PCAL}}=1$)} \\\midrule 
		SCF 	& 	 -9.4023322108e+01 & 	 2.07e-03 & 	 200  & 	 1.32e-14 & 	 309.33 \\ 
		TRDCM 	 & 	 -9.4046217545e+01 & 	 8.85e-06 & 	 45  & 	 1.08e-14 & 	 47.87 \\ 
		MOptQR 	 & 	 -9.4046217225e+01 & 	 9.90e-06 & 	 422  & 	 1.15e-14 & 	 80.67 \\ 
		OptM 	 & 	 -9.4046217545e+01 & 	 2.27e-06 & 	 245  & 	 1.03e-14 & 	 48.66 \\ 
		PLAM 	 & 	 -9.4046217854e+01 & 	 9.52e-06 & 	 278  & 	 1.34e-14 & 	 51.57 \\  
		PCAL 	 & 	 -9.4046217542e+01 & 	 8.68e-06 & 	 176  & 	 1.17e-14 & 	 41.11 \\\midrule   
		
		\multicolumn{1}{c}{} & \multicolumn{5}{c}{graphene30, $n=12279$, $p=67$\qquad\qquad($\beta_{\text{PLAM}}=13, \beta_{\text{PCAL}}=1$)} \\\midrule 
		SCF 	 & 	 -1.7358453985e+02 & 	 5.19e-03 & 	 200  & 	 1.93e-14 & 	 2815.79 \\ 
		TRDCM 	 & 	 -1.7359510506e+02 & 	 4.80e-06 & 	 71  & 	 1.42e-14 & 	 765.92 \\ 
		MOptQR 	 & 	 -1.7359510505e+02 & 	 9.92e-06 & 	 456  & 	 2.59e-14 & 	 800.08 \\  
		OptM 	 & 	 -1.7359510506e+02 & 	 2.47e-06 & 	 472  & 	 2.49e-14 & 	 904.44 \\ 
		PLAM 	 & 	 -1.7359510505e+02 & 	 8.88e-06 & 	 330  & 	 2.75e-14 & 	 601.41 \\
		PCAL 	 & 	 -1.7359510505e+02 & 	 8.52e-06 & 	 253  & 	 2.62e-14 & 	 548.70 \\  
		\midrule    
		
		\multicolumn{1}{c}{} & \multicolumn{5}{c}{h2o, $n=2103$, $p=4$\qquad\qquad($\beta_{\text{PLAM}}=10, \beta_{\text{PCAL}}=1$)} \\\midrule 
		SCF 		 & 	 -1.6440507245e+01 & 	 1.16e-08 & 	 8  & 	 1.15e-15 & 	 1.29 \\ 
		TRDCM 	 & 	 -1.6440507246e+01 & 	 6.48e-06 & 	 11  & 	 1.11e-15 & 	 1.02 \\ 
		MOptQR 	 & 	 -1.6440507246e+01 & 	 3.84e-06 & 	 49  & 	 9.30e-16 & 	 1.14 \\ 
		OptM 	 & 	 -1.6440507246e+01 & 	 2.01e-06 & 	 61  & 	 6.40e-15 & 	 1.50 \\ 
		PLAM 	 & 	 -1.6440507245e+01 & 	 6.43e-06 & 	 56  & 	 2.37e-15 & 	 1.29 \\  
		PCAL 	 & 	 -1.6440507246e+01 & 	 7.42e-06 & 	 42  & 	 1.86e-15 & 	 1.06 \\\midrule
		
		\multicolumn{1}{c}{} & \multicolumn{5}{c}{hnco, $n=2103$, $p=8$\qquad\qquad($\beta_{\text{PLAM}}=10, \beta_{\text{PCAL}}=1$)} \\\midrule 
		SCF 	 & 	 -2.8634664360e+01 & 	 9.44e-08 & 	 12  & 	 3.82e-15 & 	 4.32 \\ 
		TRDCM 	 & 	 -2.8634664365e+01 & 	 9.54e-06 & 	 13  & 	 3.47e-15 & 	 4.47 \\ 
		MOptQR 	 & 	 -2.8634664363e+01 & 	 9.74e-06 & 	 163  & 	 3.17e-15 & 	 12.26 \\ 
		OptM 	 & 	 -2.8634664365e+01 & 	 5.30e-06 & 	 117  & 	 2.26e-15 & 	 8.30 \\ 
		PLAM 	 & 	 -2.8634664364e+01 & 	 9.95e-06 & 	 105  & 	 3.18e-15 & 	 7.39 \\
		PCAL 	 & 	 -2.8634664364e+01 & 	 9.03e-06 & 	 70  & 	 2.60e-15 & 	 5.36 \\  
		\bottomrule[.4mm]
	\end{tabular}
	\caption{The results in Kohn-Sham total energy minimization\label{tab:KS1}} 
\end{table} 

\begin{table}[htbp]
	%	\scriptsize
	\centering
	\begin{tabular}{c||ccccc}
		\hline
		\toprule[.4mm]
		Solver	&   {$E_{tot}$}  &   {KKT violation}   & {Iteration}  & {Feasibility violation} & {Time(s)}   \\
		\midrule[.3mm]
		
		\multicolumn{1}{c}{} & \multicolumn{5}{c}{nic, $n=251$, $p=7$\qquad\qquad($\beta_{\text{PLAM}}=10, \beta_{\text{PCAL}}=1$)} \\\midrule 
		SCF 	 & 	 -2.3543529950e+01 & 	 2.13e-10 & 	 11  & 	 2.99e-15 & 	 1.47 \\ 
		TRDCM 	 & 	 -2.3543529955e+01 & 	 7.94e-06 & 	 15  & 	 4.49e-15 & 	 0.99 \\ 
		MOptQR 	 & 	 -2.3543529955e+01 & 	 3.04e-06 & 	 111  & 	 2.73e-15 & 	 1.53 \\ 
		OptM 	 & 	 -2.3543529955e+01 & 	 3.86e-07 & 	 63  & 	 8.80e-15 & 	 0.90 \\ 
		PLAM 	 & 	 -2.3543529955e+01 & 	 4.02e-06 & 	 67  & 	 1.39e-15 & 	 0.89 \\
		PCAL 	 & 	 -2.3543529955e+01 & 	 8.42e-06 & 	 52  & 	 1.88e-15 & 	 0.99 \\  
		\midrule 
		
		\multicolumn{1}{c}{} & \multicolumn{5}{c}{pentacene, $n=44791$, $p=51$\qquad\qquad($\beta_{\text{PLAM}}=13, \beta_{\text{PCAL}}=1$)} \\\midrule 
		SCF 		 & 	 -1.3189029494e+02 & 	 5.76e-07 & 	 13  & 	 1.58e-14 & 	 293.68 \\ 
		TRDCM 	 & 	 -1.3189029495e+02 & 	 7.60e-06 & 	 22  & 	 1.08e-14 & 	 276.25 \\ 
		MOptQR 	 & 	 -1.3189029495e+02 & 	 7.78e-06 & 	 112  & 	 3.21e-14 & 	 306.97 \\ 
		OptM 	 & 	 -1.3189029495e+02 & 	 1.39e-06 & 	 97  & 	 3.39e-14 & 	 283.02 \\ 
		PLAM 	 & 	 -1.3189029495e+02 & 	 8.66e-06 & 	 123  & 	 3.52e-14 & 	 321.04 \\  
		PCAL 	 & 	 -1.3189029495e+02 & 	 7.67e-06 & 	 89  & 	 3.08e-14 & 	 271.32 \\\midrule  
		
		\multicolumn{1}{c}{} & \multicolumn{5}{c}{ptnio, $n=4069$, $p=43$\qquad\qquad($\beta_{\text{PLAM}}=13, \beta_{\text{PCAL}}=1$)} \\\midrule 
		SCF 	 & 	 -2.2678884268e+02 & 	 1.09e-05 & 	 53  & 	 1.46e-14 & 	 168.25 \\ 
		TRDCM 	 & 	 -2.2678882693e+02 & 	 2.81e-04 & 	 200  & 	 1.07e-14 & 	 471.34 \\ 
		MOptQR 	 & 	 -2.2678884271e+02 & 	 9.57e-06 & 	 786  & 	 1.06e-14 & 	 347.38 \\ 
		OptM 	 & 	 -2.2678884273e+02 & 	 9.52e-06 & 	 508  & 	 1.14e-14 & 	 203.63 \\ 
		PLAM 	 & 	 -2.2678884271e+02 & 	 9.00e-06 & 	 579  & 	 1.01e-14 & 	 213.60 \\
		PCAL 	 & 	 -2.2678884271e+02 & 	 8.55e-06 & 	 386  & 	 1.19e-14 & 	 189.70 \\  
		\midrule  
		
		\multicolumn{1}{c}{} & \multicolumn{5}{c}{qdot, $n=2103$, $p=8$\qquad\qquad($\beta_{\text{PLAM}}=10, \beta_{\text{PCAL}}=1$)} \\\midrule 
		SCF 	 & 	 2.7700280133e+01 & 	 6.70e-03 & 	 5  & 	 2.92e-15 & 	 1.09 \\ 
		TRDCM 	 & 	 2.7699537080e+01 & 	 1.43e-02 & 	 200  & 	 2.73e-15 & 	 27.01 \\ 
		MOptQR 	 & 	 1.0483319768e+02 & 	 3.45e+01 & 	 1000  & 	 1.77e-15 & 	 28.72 \\ 
		OptM 	 & 	 2.7699807230e+01 & 	 1.45e-04 & 	 1000  & 	 2.39e-15 & 	 29.89 \\ 
		PLAM 	 & 	 2.7699800860e+01 & 	 9.68e-06 & 	 678  & 	 1.98e-15 & 	 19.30 \\
		PCAL 	 & 	 2.7699800851e+01 & 	 5.41e-06 & 	 962  & 	 2.88e-15 & 	 35.01 \\  
		\midrule
		
		\multicolumn{1}{c}{} & \multicolumn{5}{c}{si2h4, $n=2103$, $p=6$\qquad\qquad($\beta_{\text{PLAM}}=10, \beta_{\text{PCAL}}=1$)} \\\midrule 
		SCF 	 & 	 -6.3009750375e+00 & 	 5.25e-07 & 	 11  & 	 3.62e-15 & 	 2.97 \\ 
		TRDCM 	 & 	 -6.3009750459e+00 & 	 8.24e-06 & 	 16  & 	 3.12e-15 & 	 4.30 \\ 
		MOptQR 	 & 	 -6.3009750460e+00 & 	 3.70e-06 & 	 116  & 	 2.00e-15 & 	 5.96 \\ 
		OptM 	 & 	 -6.3009750459e+00 & 	 9.60e-06 & 	 68  & 	 1.41e-14 & 	 4.15 \\  
		PLAM 	 & 	 -6.3009750455e+00 & 	 7.27e-06 & 	 89  & 	 1.58e-15 & 	 5.33 \\
		PCAL 	 & 	 -6.3009750459e+00 & 	 4.33e-06 & 	 62  & 	 2.42e-15 & 	 3.90 \\  
		\midrule   
		
		\multicolumn{1}{c}{} & \multicolumn{5}{c}{sih4, $n=2103$, $p=4$\qquad\qquad($\beta_{\text{PLAM}}=10, \beta_{\text{PCAL}}=1$)} \\\midrule 
		SCF 	 & 	 -6.1769279820e+00 & 	 2.07e-08 & 	 8  & 	 1.75e-15 & 	 1.91 \\ 
		TRDCM 	 & 	 -6.1769279850e+00 & 	 9.53e-06 & 	 10  & 	 1.14e-15 & 	 1.60 \\ 
		MOptQR 	 & 	 -6.1769279851e+00 & 	 4.32e-06 & 	 34  & 	 1.58e-15 & 	 1.07 \\ 
		OptM 	 & 	 -6.1769279851e+00 & 	 8.18e-06 & 	 46  & 	 8.52e-16 & 	 1.62 \\ 
		PLAM 	 & 	 -6.1769279849e+00 & 	 7.37e-06 & 	 56  & 	 1.99e-15 & 	 1.79 \\
		PCAL 	 & 	 -6.1769279847e+00 & 	 9.16e-06 & 	 47  & 	 1.55e-15 & 	 1.69 \\  
		\bottomrule[.4mm]
	\end{tabular}
	\caption{The results in Kohn-Sham total energy minimization\label{tab:KS2}} 
\end{table}

%\clearpage
\subsection{Parallel Efficiency\label{section: parallel}}
In this subsection, we examine the parallel efficiency of our algorithms PLAM and PCAL. 
To investigate the parallel scalability, we need to test large scale problems
in a single core, which consumes lots of CPU time.
To avoid meaningless tests, we only compare the parallel 
performances of PCAL with MOptQR in this subsection.

Both algorithms are implemented in the C++ language and parallelized by OpenMP. 
The linear algebra library we used in comparison is Eigen\footnote{Available from 
\href{http://eigen.tuxfamily.org/index.php?title=Main_Page}
{http://eigen.tuxfamily.org/index.php?title=Main\_Page}} (version 3.3.4), 
which is an open and popular %and easy-using 
C++ template library for matrix computation. 
We define the speedup factor for running a code on $m$ cores as
$$\text{speedup-factor}(m)=\frac{\text{wall-clock time for a single core run}}
{\text{wall-clock time for a~} m\text{-core run}}.$$

%\clearpage
BLAS3 type arithmetic operations contribute a high proportion in
computational cost in both PCAL and MOptQR. 
Therefore, a good parallel strategy for BLAS3 calculation is unnegligible in saving
CPU time. Given this, we first determine the parallel strategy for 
matrix-matrix multiplication by a set of tests.
We have two choices. The library Eigen provides its own 
multi-threading computing\footnote{More information at	
\href{http://eigen.tuxfamily.org/dox/TopicMultiThreading.html}
{http://eigen.tuxfamily.org/dox/TopicMultiThreading.html}} 
that is the default parallel strategy for dense matrix-matrix products and 
row-major-sparse$*$dense vector/matrix products 
in OpenMP.
Another strategy is to parallelize BLAS3 computation 
in the manner of column-wise product. Namely, when we calculate $AB$,
we multiply matrix $A$ by each column of $B$ in parallel.
To figure out which strategy is better, we test the 
parallel scalability of BLAS3 computation under these two schemes. We generate 
{\ttfamily A=Random(1000,10000)} and {\ttfamily B=Random(10000,1000)}, where 
``{\ttfamily Random($\cdot$,$\cdot$)}" is an internal generation function provided by Eigen. We run 
the code in parallel with $1$, $2$, $4$, $8$, $16$, $32$, $64$ and $96$ cores, respectively. The result of matrix-matrix multiplication 
$AB$ is illustrated in Figure \ref{fig:DD-BLAS3}. 
``Eigen" and ``Column-wise" represent the default parallel strategy and column-wise product strategy, 
respectively. We can observe that column-wise parallelization obviously 
outperforms the default setting of Eigen in multi-threading computing. 
Hence, in the following implementation, we choose column-wise parallelization strategy for BLAS3 in 
our experiments.
\begin{figure}[htbp]		
	\setcounter{subfigure}{0}
	\centering
	\subfigure[Speedup factor]
	{\includegraphics[scale=.4]
		{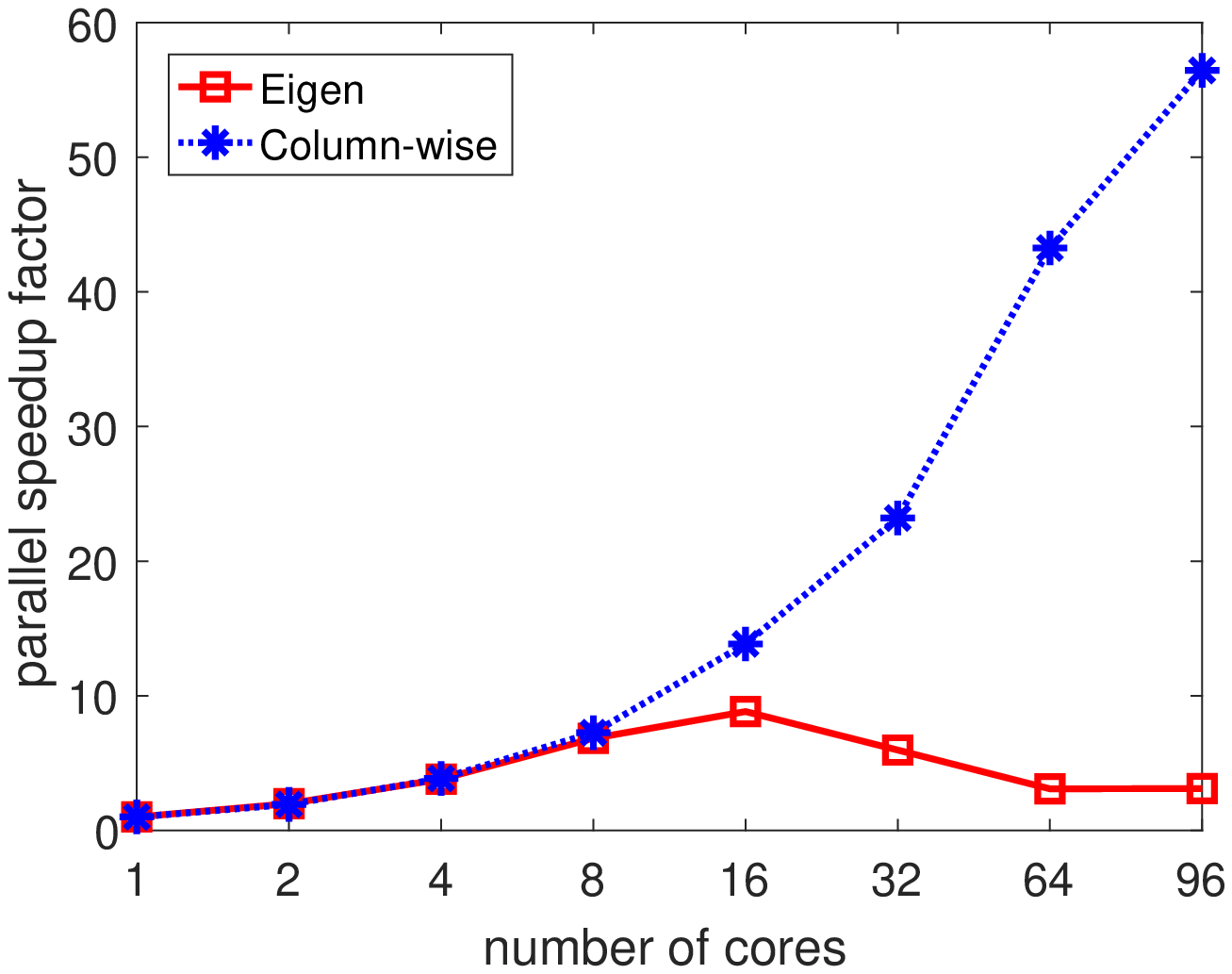}}
	\qquad
	\subfigure[Wall-clock time(s)]
	{\includegraphics[scale=.4]
		{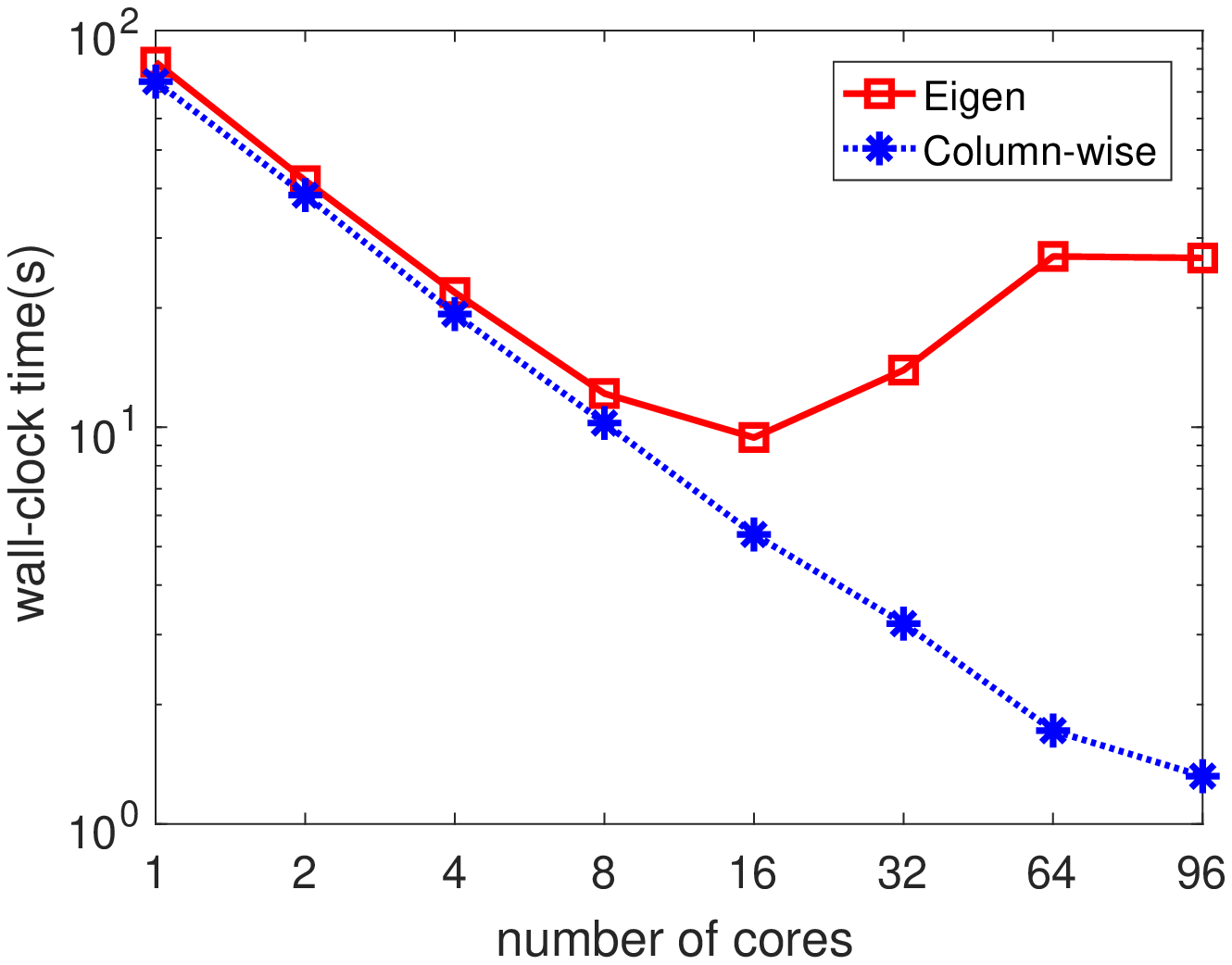}}	
	\caption{The results of dense-dense BLAS3: $A^{1000\times 10000}B^{10000\times 1000}$\label{fig:DD-BLAS3}}
\end{figure}

%\clearpage
Next, we investigate the parallel scalability of the new proposed PCAL and MOptQR. 
%As we know, MOptQR carries QR factorization in each iteration, while PCAL calculates only in the final 
%correction step. It is crucial to choose a suitable QR factorization algorithm in our implementation. 
According to the existent numerical report of Eigen\footnote{More information at 
\href{http://eigen.tuxfamily.org/dox/group__TutorialLinearAlgebra.html}
{http://eigen.tuxfamily.org/dox/group\_\_TutorialLinearAlgebra.html}}, 
we select the class ``{\ttfamily LLT}" in Eigen to compute QR factorization. 
The calculation of orthonormalization consists of a small size ($p$-by-$p$) Cholesky decomposition
and solving a $p$-by-$p$ linear system. The maximum number of iterations for  MOptQR and PCAL is set to
$1000$. All the parameters for MOptQR and PCAL take their default values. The initial guess $X^0$ is 
generated by $X^0=${\ttfamily random(n,p)}" and $X^0=${\ttfamily qr}$(X^0)$.

We first focus on the test Problems 1 and 2. For Problem 1, we set $L$ as a block diagonal matrix, i.e., 
$L=\Diag(L_1,\dots,L_s)$, where $L_i\in\R^{5\times5}$ is a tridiagonal matrix with $2$ on its main diagonal 
and $-1$ on subdigonal, for $i=1,\dots,s$. The coefficient $\alpha$ is set to $1$. For the generation of 
Problem 2, we set $A$ as a tridiagonal matrix with $2$ on its main diagonal and $-1$ on subdigonal, 
and $G${\ttfamily=Random(n,p)}. The advantage of such generation is to make function value and gradient
calculations parallelizable.
%This kind of generation for problem gives us opportunity to evaluate the function and its gradient in more 
%efficient way, since the matrices in problems are sparse and structured. 
%The speedup factor should remain essentially unchanged as more or less iterations are performed when the 
%costs of function evaluation are comparable. 
%\clearpage
In the first group of tests, we aim to figure out how MOptQR and PCAL perform with the increasing
width of variables. We set $n=10000$ and $p$ varying from a set of 
increasing values $\{500,1000,1500,2000,2500\}$. Both algorithms are run in parallel with $96$ cores.
The wall-clock time results are shown in Figure \ref{fig:vary-p}.
%, and the detailed computational results are listed in Table \ref{tab:vary-p}. 
Here, ``\#cores" stands for the number of cores. 
%From the Table \ref{tab:vary-p}, we observe that PCAL and MOptQR stop with an almost same function value, 
%and obtain a comparable KKT violation and feasibility violation. 
From Figure \ref{fig:vary-p}, we notice that PCAL always takes less amount of 
wall-clock time than MOptQR. As the width of the matrix variable increases, 
the running time of MOptQR increases much more rapidly than PCAL. 
%The main reason that PCAL outperforms MOptQR is the computational cost of PCAL is dominated by BLAS3 
%calculation, which is much more scalable than other operations.
\begin{figure}[htbp]
	\setcounter{subfigure}{0}
	\centering
	\subfigure[Problem 1]
	{\includegraphics[scale=.4]
		{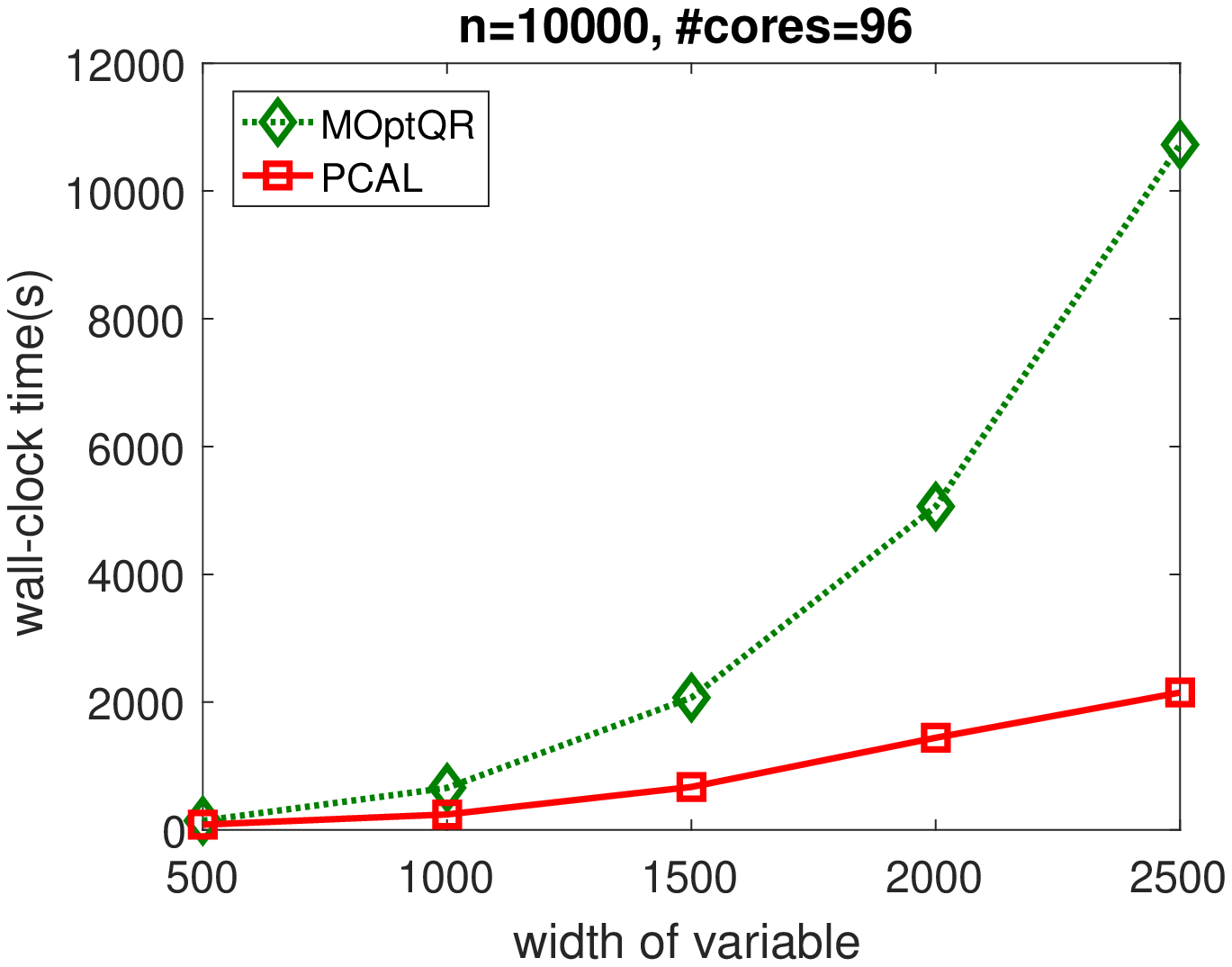}}
	\qquad
	\subfigure[Problem 2]
	{\includegraphics[scale=.4]
		{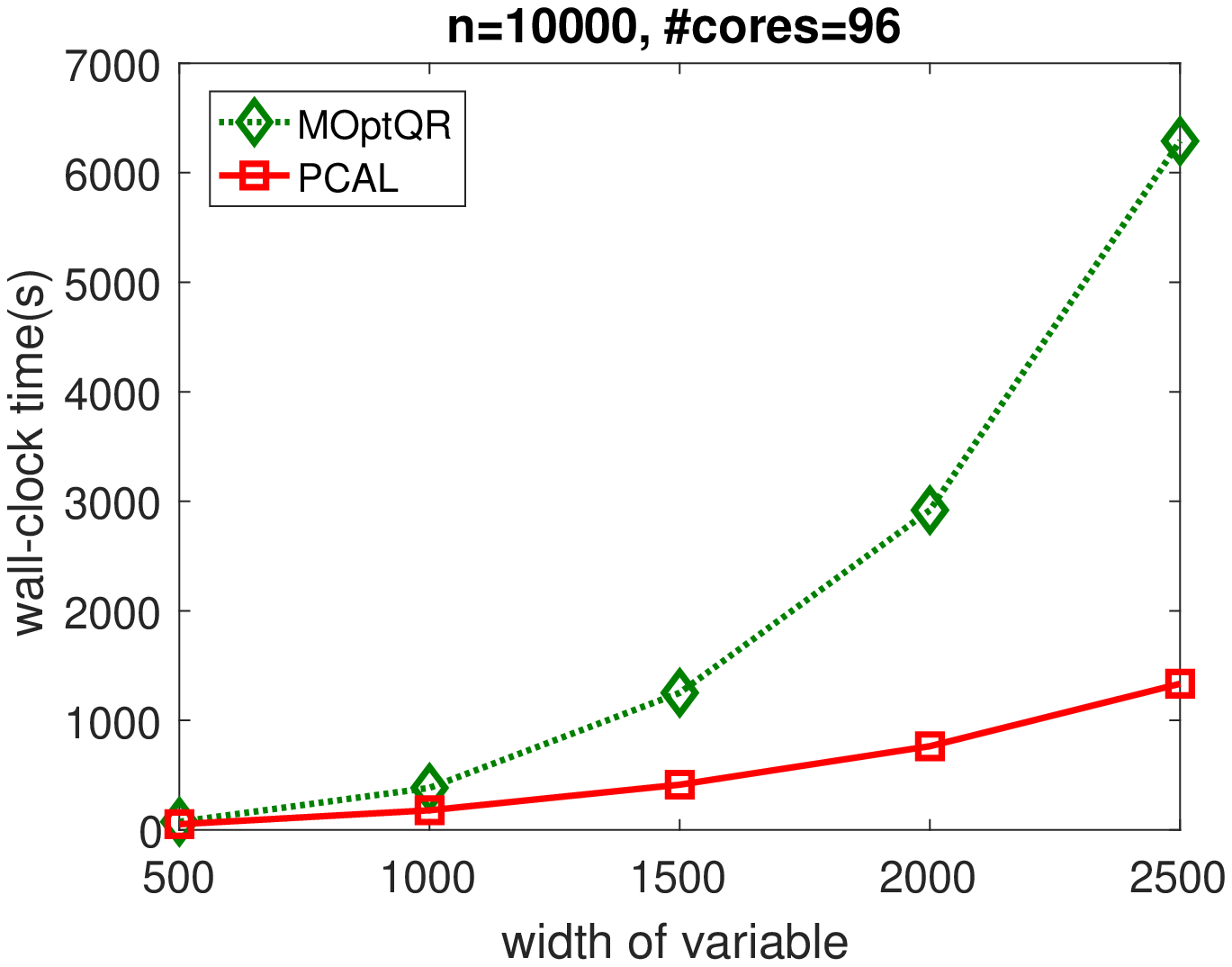}}	
	\caption{The wall-clock time results on varying width of the matrix variable\label{fig:vary-p}}
\end{figure}

%\clearpage
In Figure \ref{fig:OCQP-percentage}, we show wall-clock time of four categories: 
three categories: ``BLAS3" (dense-dense matrix multiplication), ``Func" (function value and gradient
evaluation) and ``Orth" (orthonormalization including QR factorization for MOptQR and the final correction step in PCAL). These are the major
computational components of both PCAL and MOptQR, albeit in different proportions.
We have to clarify two issues: firstly, we categorize these categories of
calculation only at the
highest solver level. As such, any matrix-matrix multiplication involved in
function value and gradient evaluation is not counted as in the ``BLAS3" category.
Secondly, although the “correctness” of such a
classification scheme may be debatable, it does not alter the overall fact, as is clearly shown
by our computational results, that the category ``BLAS3" is much more scalable than the
category ``Orth" on our test platform.
% % % % % % %
The running time of each category is measured in terms of the percentage of wall-clock time
spent in that category over the total wall-clock time. We can clearly see that for PCAL
the run time of ``BLAS3" dominates the entire computation in almost all cases. The ``BLAS3"
time increases steadily as $p$ increases from $500$ to $2500$, while the ``Func" time decreases
steadily. The run time of ``Orth" is negligible.
However, for MOptQR, the ``BLAS3" time takes around $60\%$ of total run time and decreases
steadily with the increasing of $p$. Meanwhile, the ``Orth" time takes around ``40\%" of total
run time and increasing steadily.
%This illustrates that PCAL is more suitable and more potential than MOptQR for large scale problem, since 
%BLAS3 calculation is much more scalable than orthonormalization process.
\begin{figure}[htbp]		
	\setcounter{subfigure}{0}
	\centering
	\subfigure[MOptQR]
	{\includegraphics[scale=.4]
		{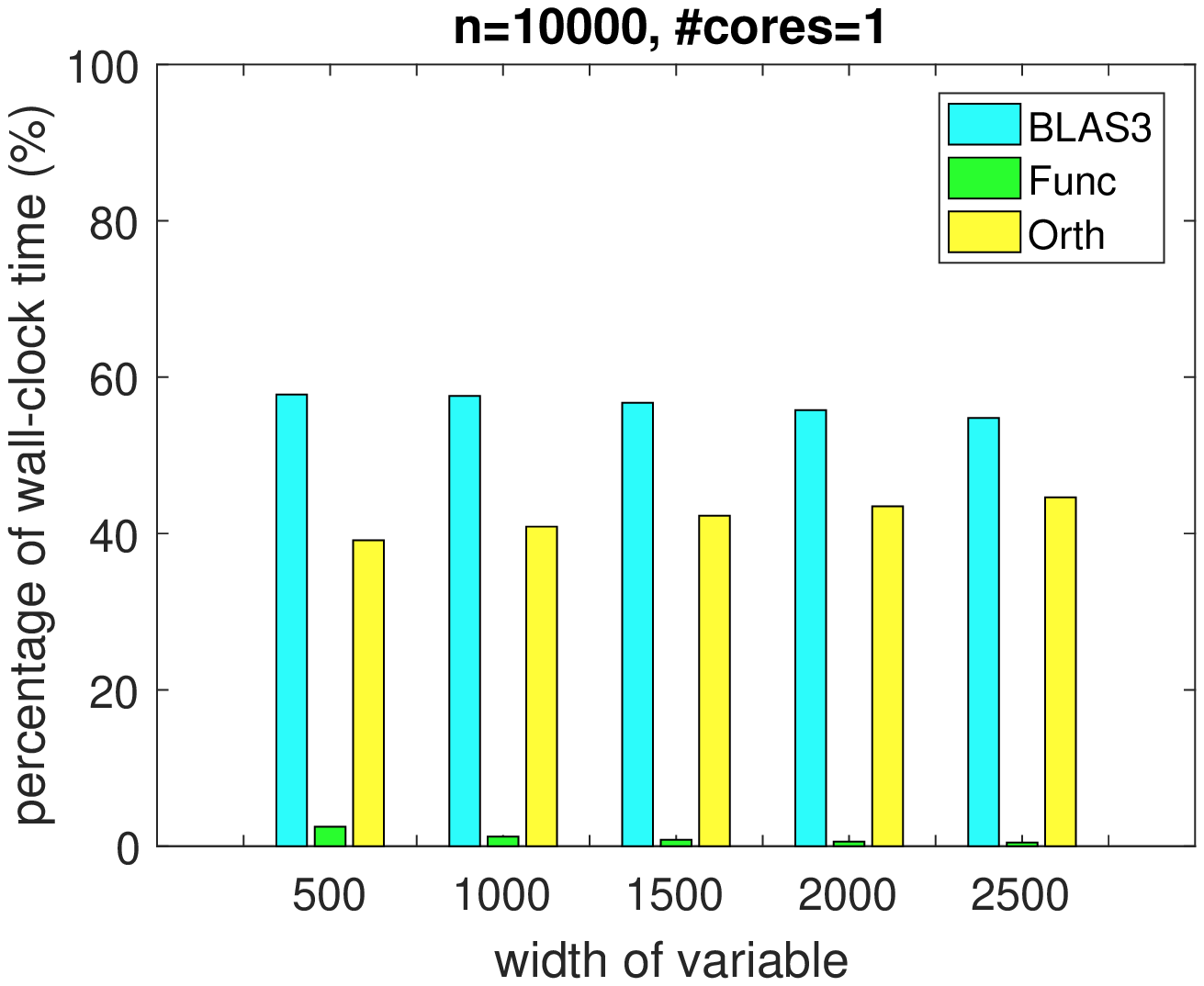}}
	\qquad
	\subfigure[PCAL]
	{\includegraphics[scale=.4]
		{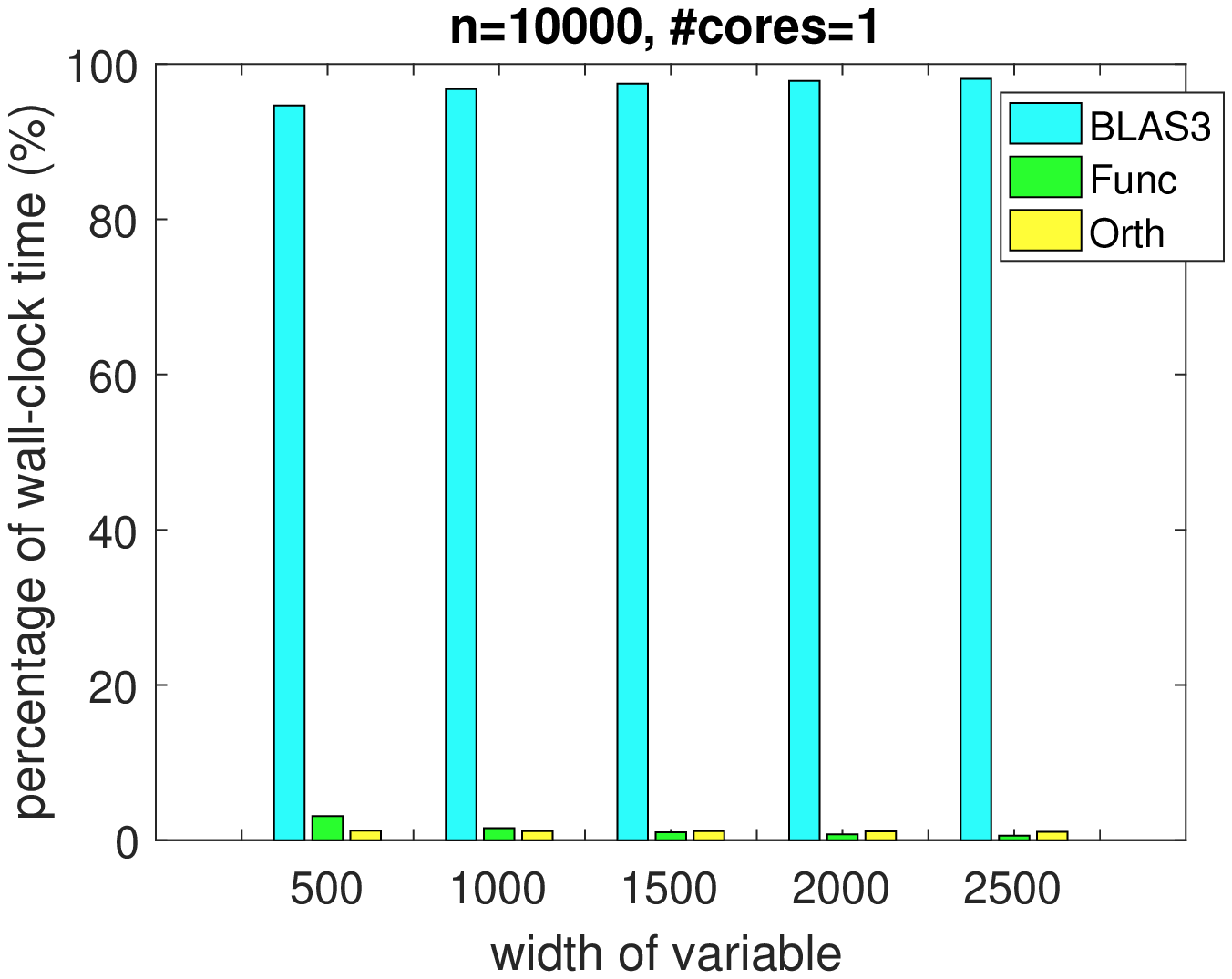}}	
	\caption{A comparison of timing profile on a single core for Problem 2\label{fig:OCQP-percentage}}
\end{figure}

%\clearpage
Now, we set $n=10000$ and $p=1000, 2000$, and run PCAL and MOptQR 
in parallel with $1$, $2$, $4$, $8$, $16$, $32$, $64$ and $96$ cores, respectively.
Figure \ref{fig:speedup} and \ref{fig:speedup1} illustrate the speedup factors associated with total 
running wall-clock time, ``BLAS3", ``Func" and ``Orth", respectively. 
From these two figures, we can observe that BLAS3 operation has high parallel scalability, while the 
speedup factor of ``Orth" increases slowly as the number of cores increases,
which directly leads to the higher overall scalability of PCAL than MOptQR.
Moreover, as the width of the matrix variable increasing, the advantage of PCAL 
in parallel scalability becomes more obvious. 
\begin{figure}[htbp]	
	\setcounter{subfigure}{0}
	\centering
	\subfigure[Problem 1: MOptQR]
	{\includegraphics[scale=.4]
		{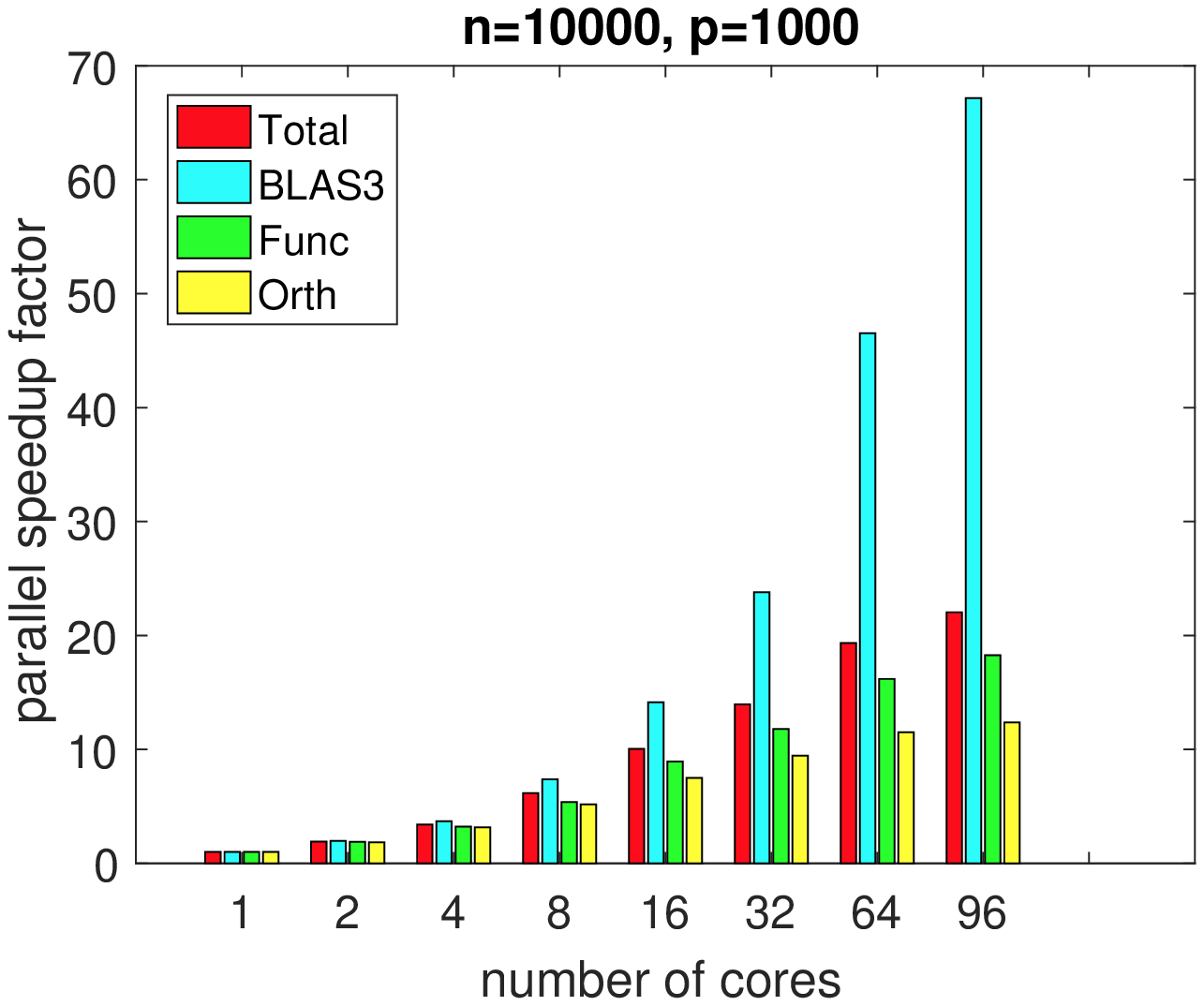}}
	\qquad
	\subfigure[Problem 1: PCAL]
	{\includegraphics[scale=.4]
		{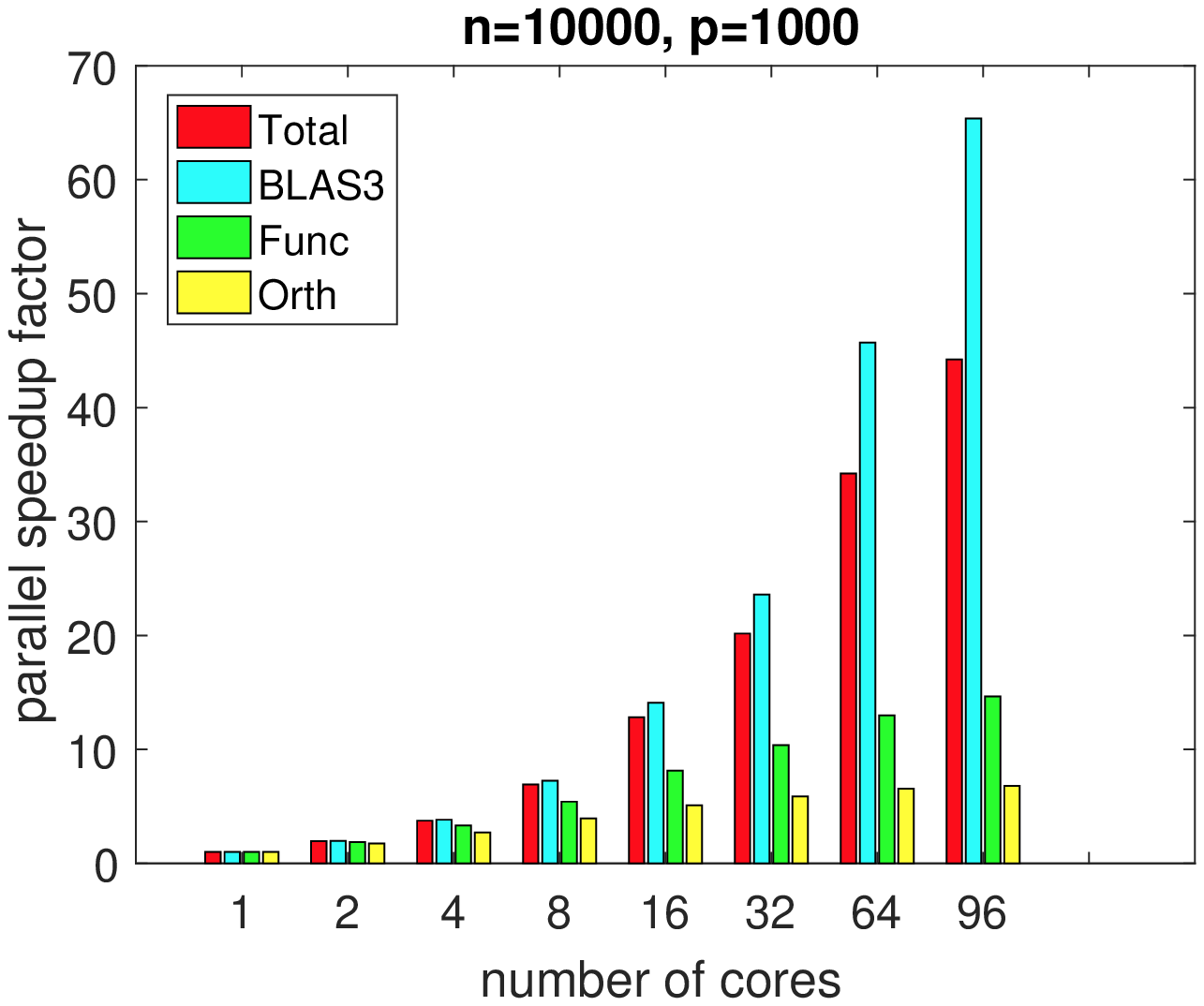}}	
	
	\subfigure[Problem 2: MOptQR]
	{\includegraphics[scale=.4]
		{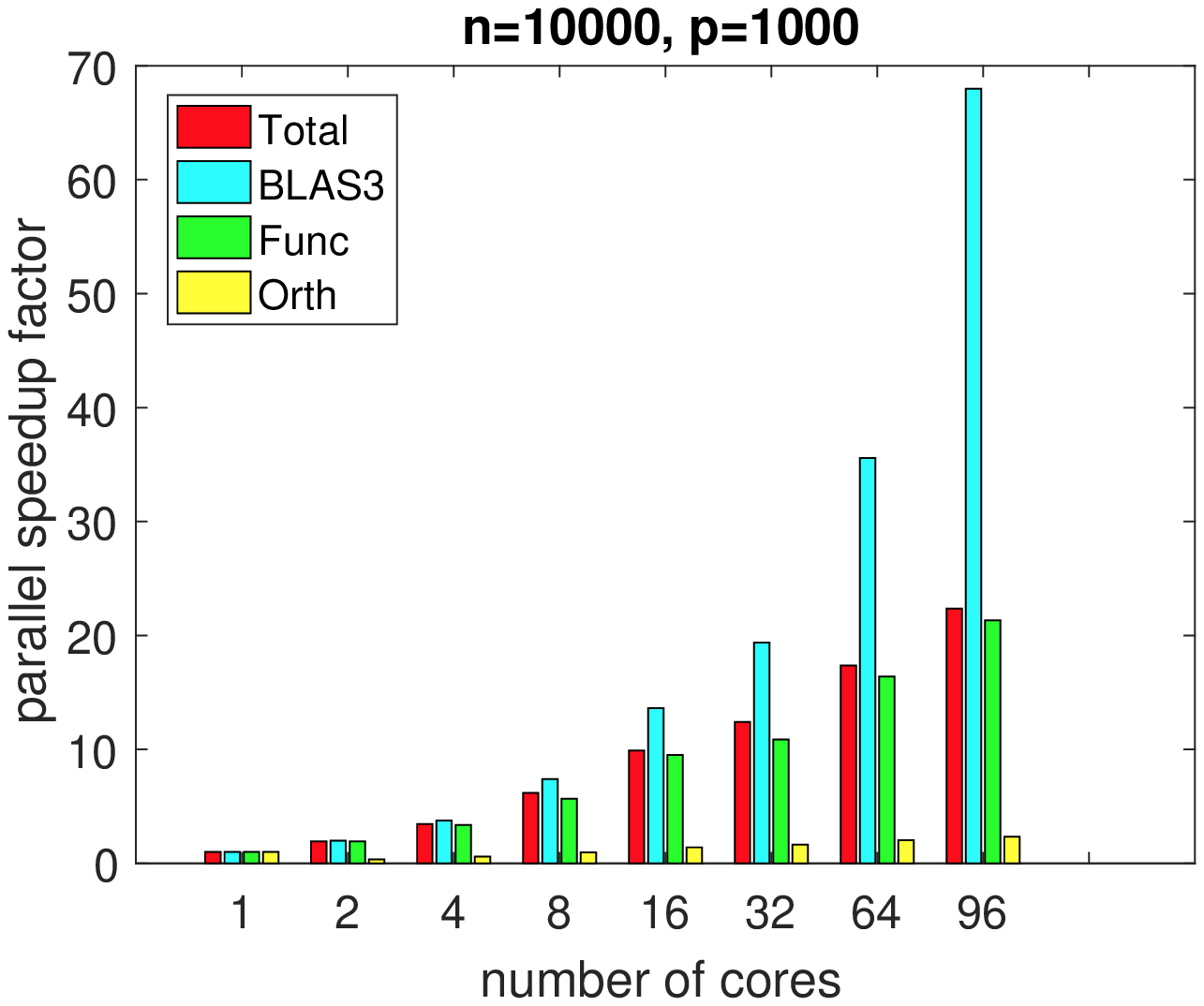}}
	\qquad
	\subfigure[Problem 2: PCAL]
	{\includegraphics[scale=.4]
		{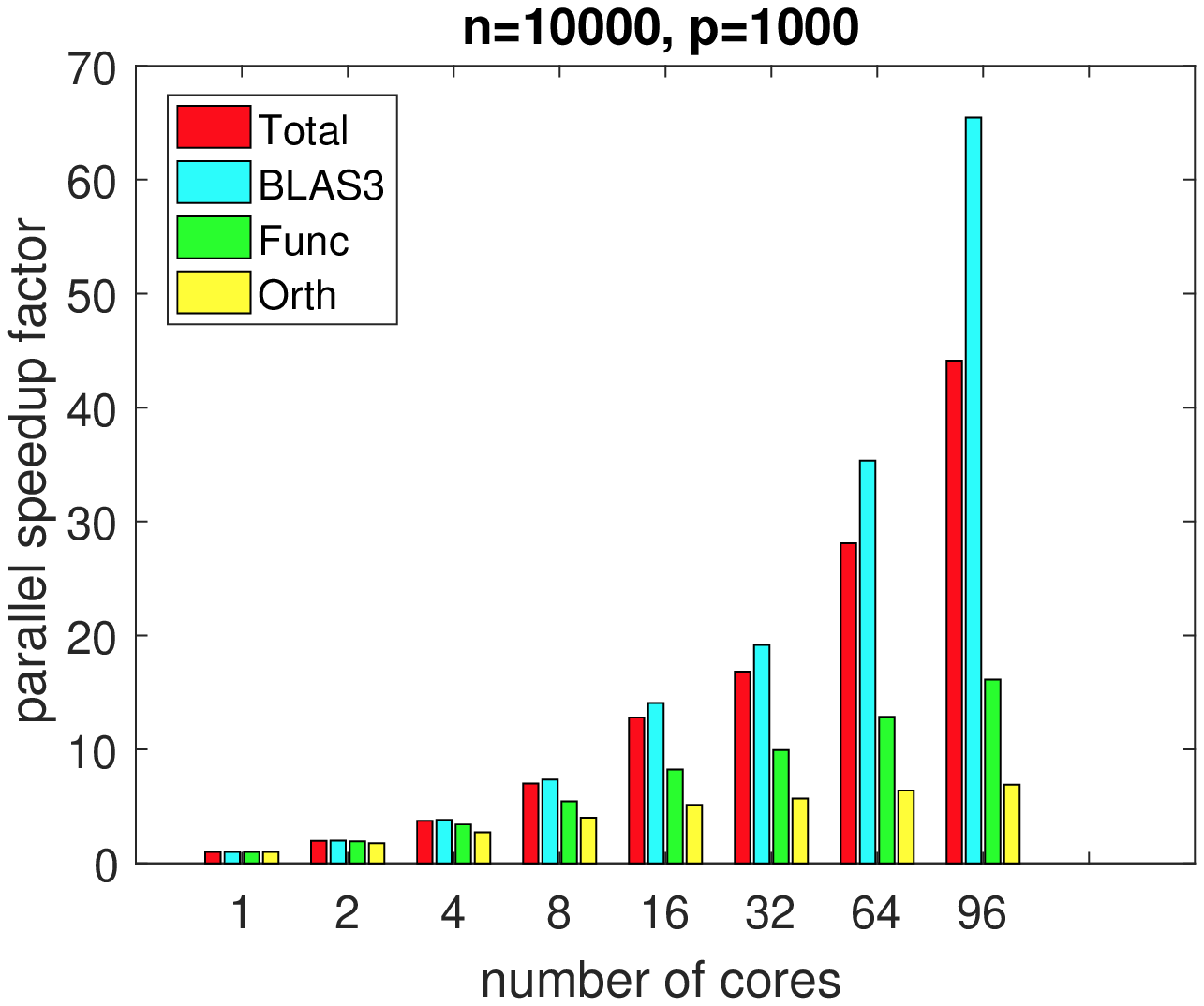}}		
	\caption{A comparison of speedup factor among MOptQR and PCAL ($p=1000$)\label{fig:speedup}}
\end{figure}
\begin{figure}[htbp]	
	\setcounter{subfigure}{0}
	\centering
	\subfigure[Problem 1: MOptQR]
	{\includegraphics[scale=.4]
		{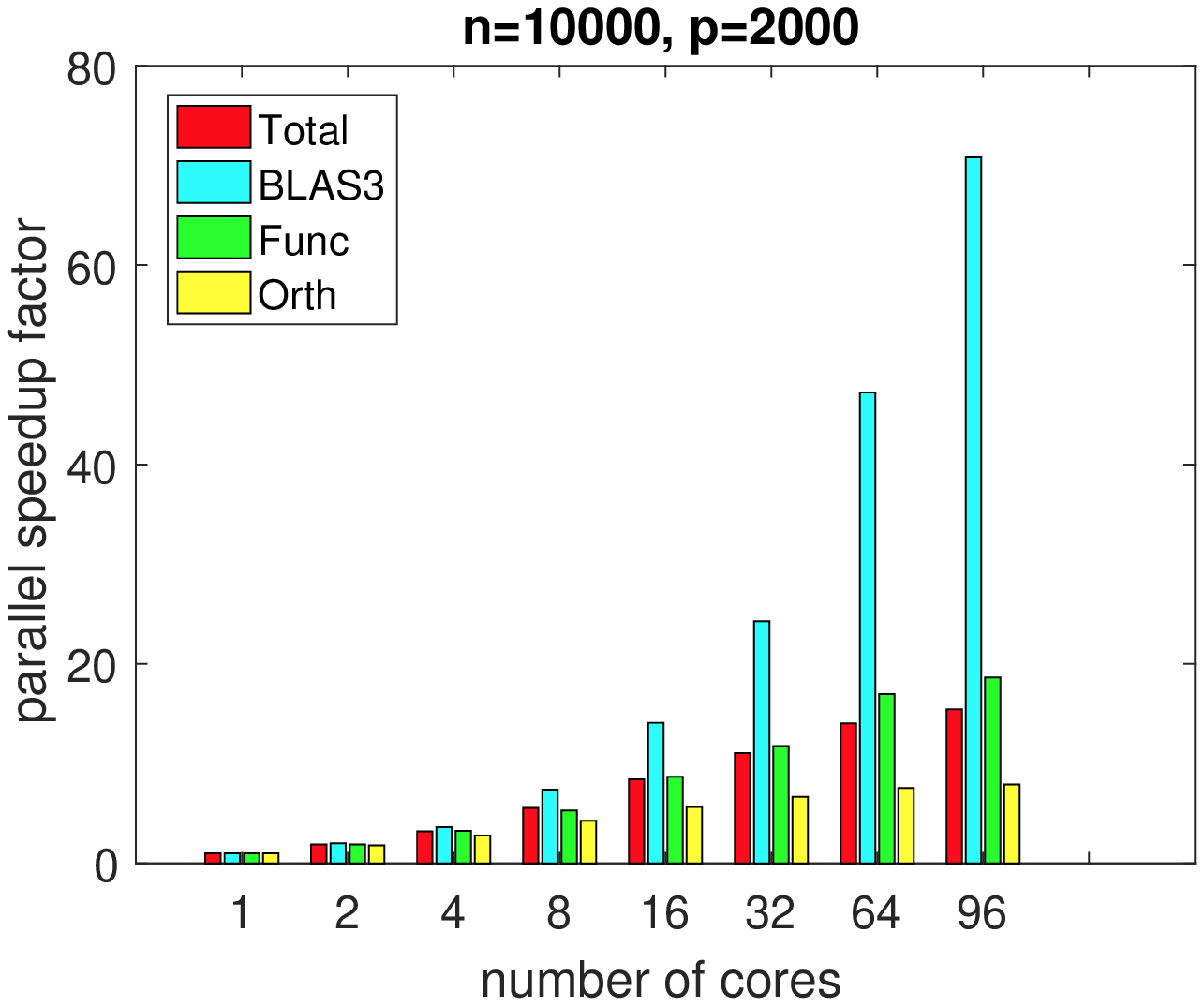}}
	\qquad
	\subfigure[Problem 1: PCAL]
	{\includegraphics[scale=.4]
		{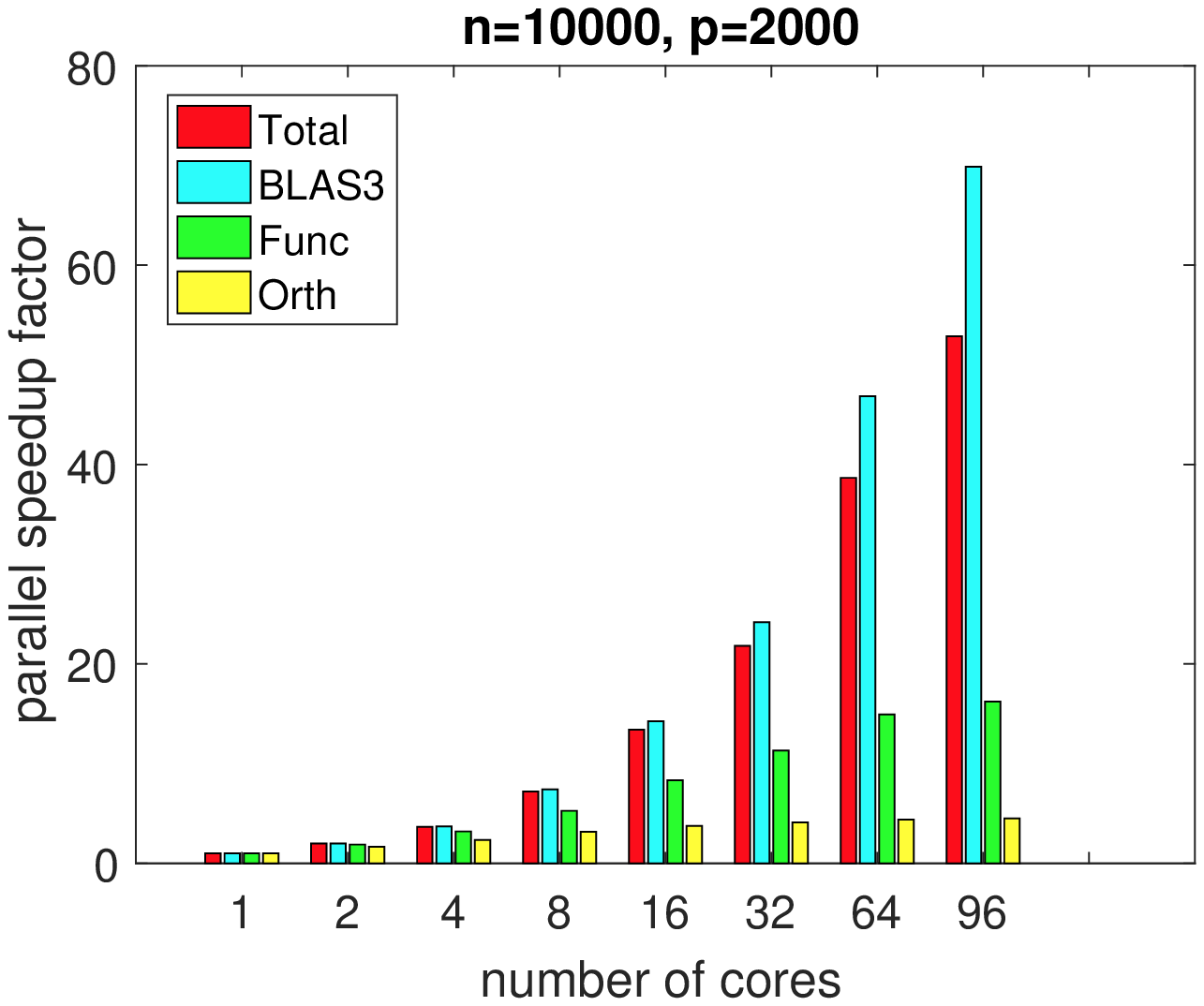}}	
	
	\subfigure[Problem 2: MOptQR]
	{\includegraphics[scale=.4]
		{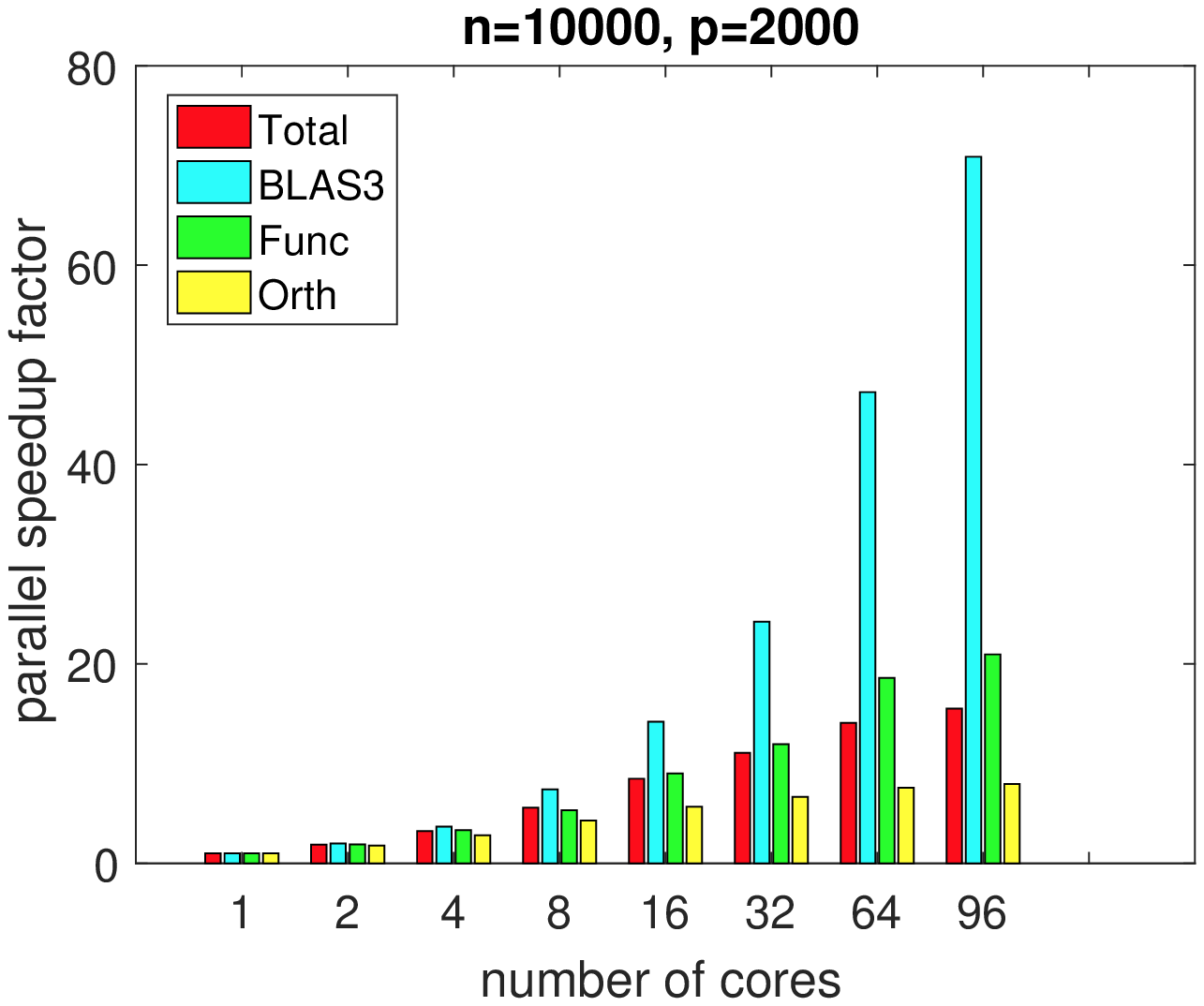}}
	\qquad
	\subfigure[Problem 2: PCAL]
	{\includegraphics[scale=.4]
		{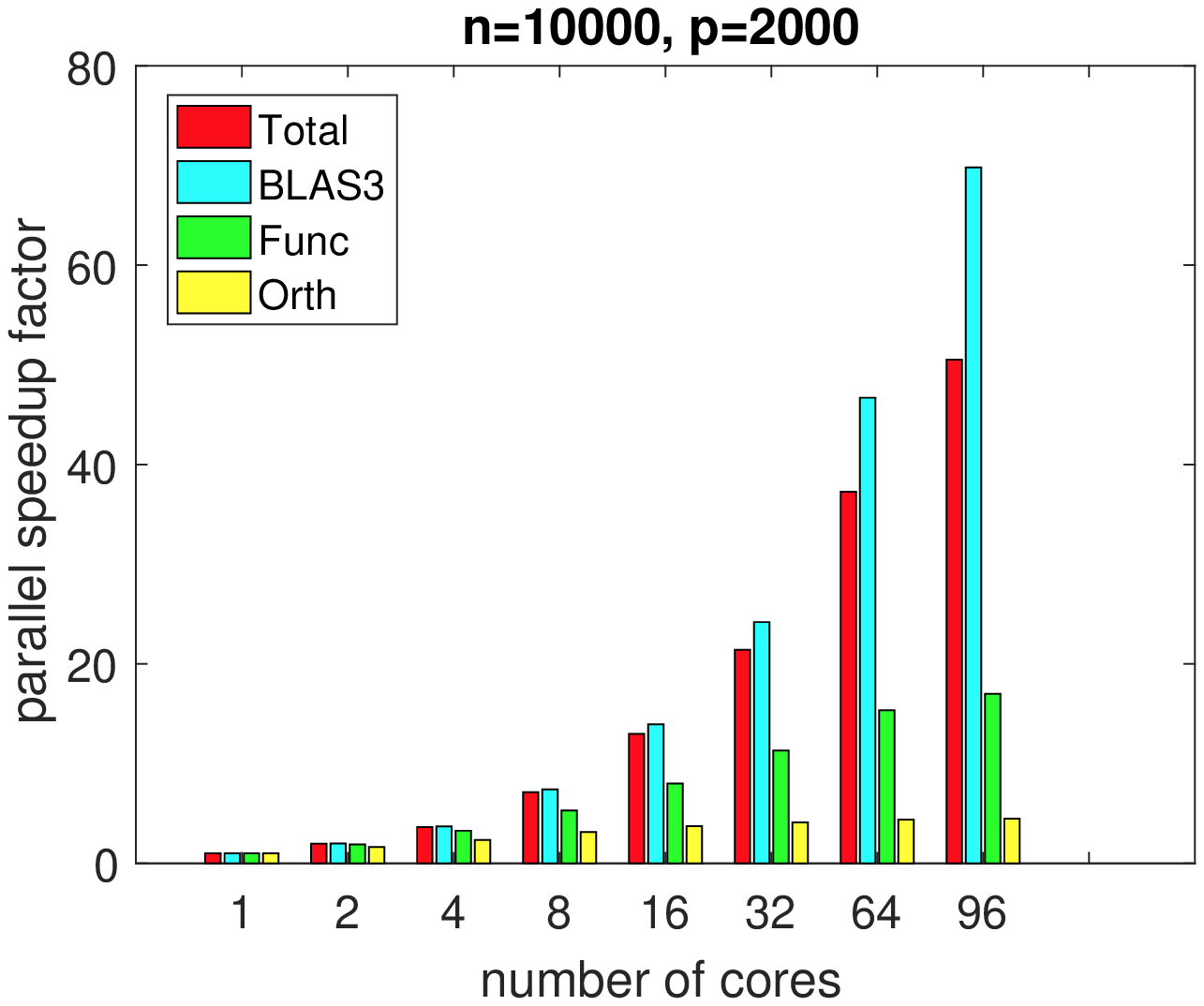}}		
	\caption{A comparison of speedup factor among MOptQR and PCAL ($p=2000$)\label{fig:speedup1}}
\end{figure}

%\clearpage
In the end, we test Problem 6 under $n=10000$, $p=1000$.
Figure \ref{fig:speedupks} illustrate the results of speedup factors
associated with total 
running wall-clock time, ``BLAS3", ``Func" and ``Orth" of PCAL and MOptQR, respectively. 
We can learn from this figure
that the overall scalability of PCAL is again superior to that of MOptQR. 
\begin{figure}[htbp]	
	\setcounter{subfigure}{0}
	\centering
	\subfigure[MOptQR]
	{\includegraphics[scale=.4]
		{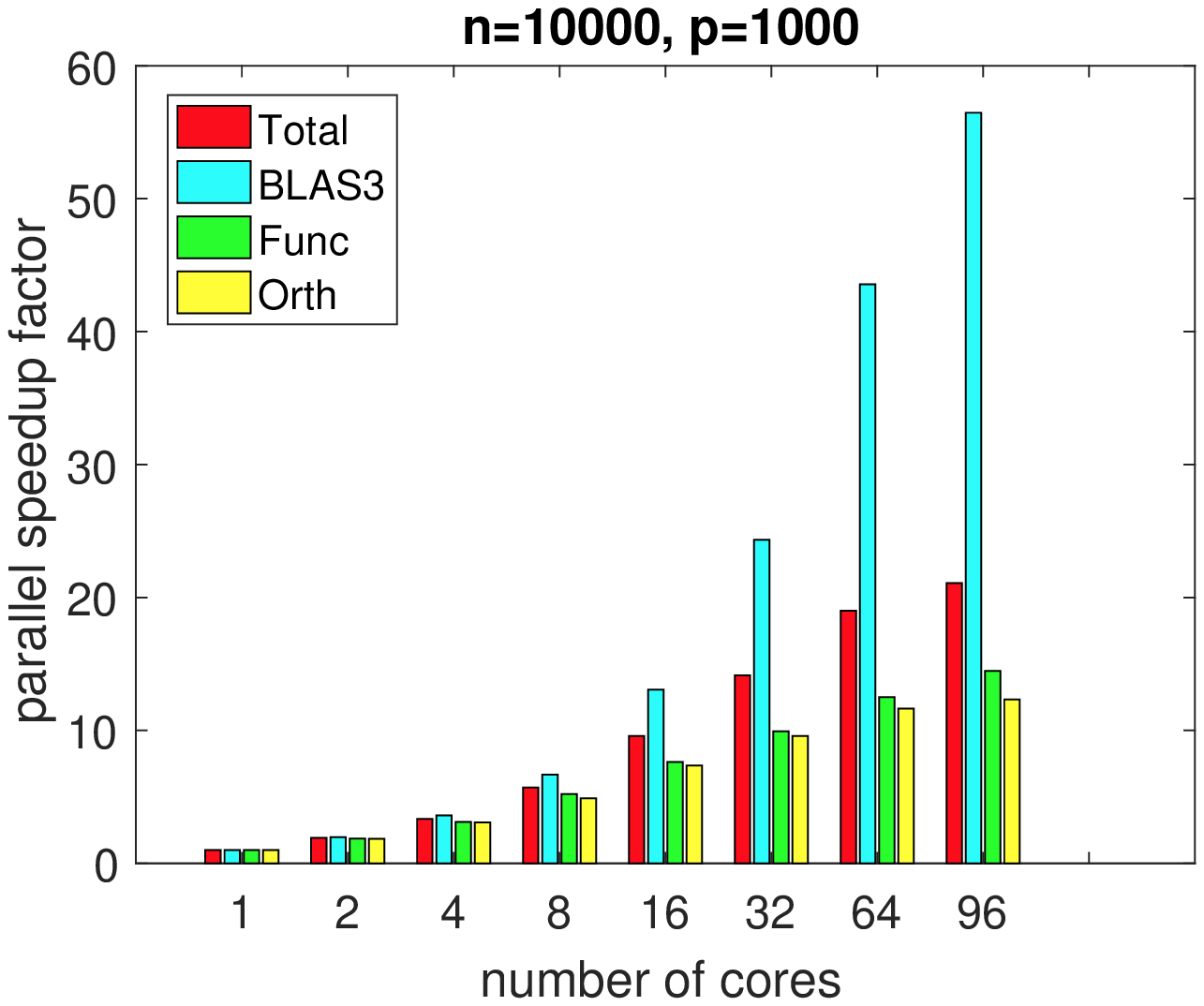}}
	\qquad
	\subfigure[PCAL]
	{\includegraphics[scale=.4]
		{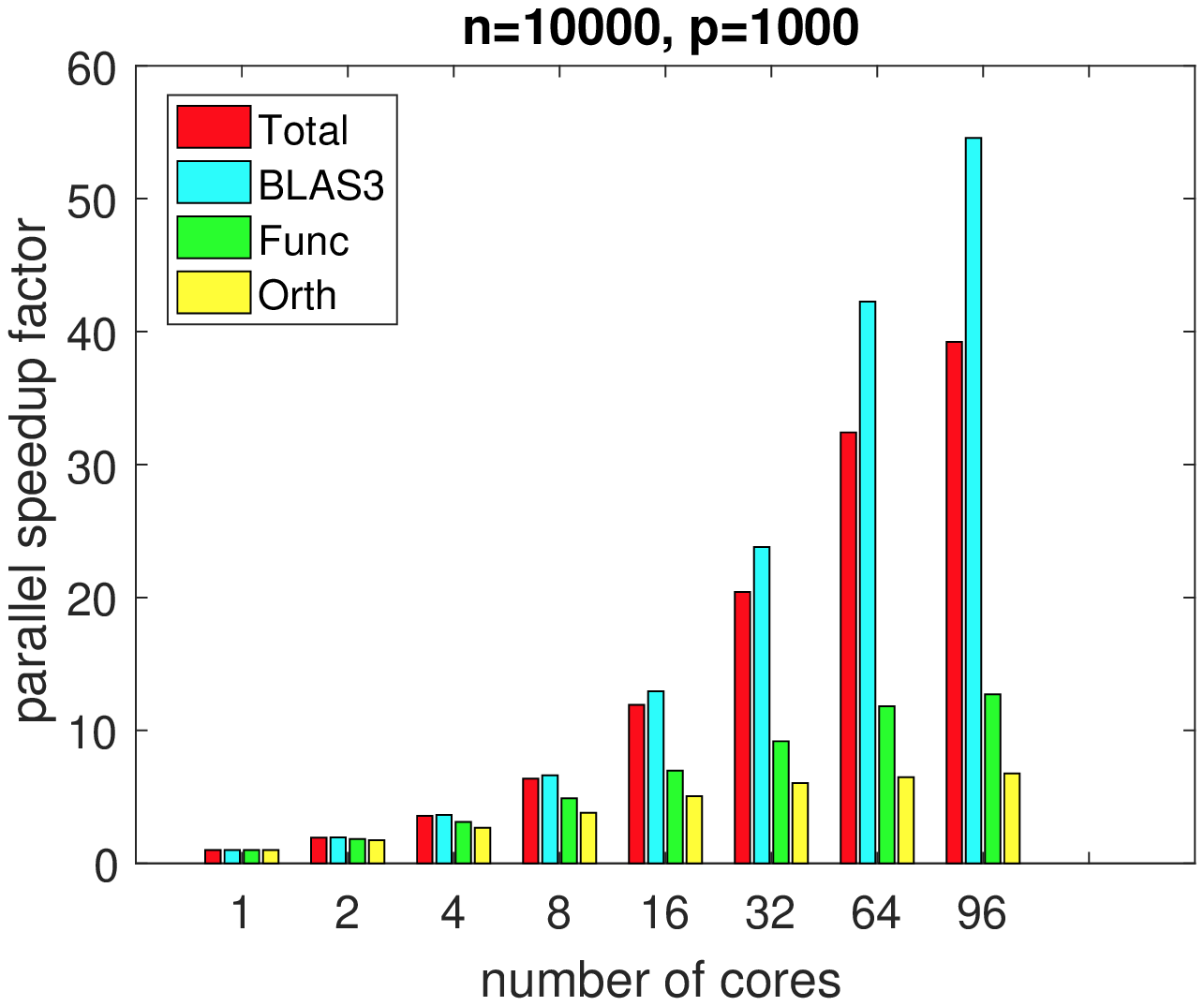}}	
	
	\caption{A comparison of speedup factor among MOptQR and PCAL on the simplified Kohn-Sham total energy minimization \label{fig:speedupks}}
\end{figure}

\section{Conclusion}

Optimization problems with orthogonality constraints have wide application in 
materials science, machine learning, image processing and so on. 
Particularly, when we apply Kohn-Sham density functional theory (KSDFT) to electronic structure 
calculation, the last step is to solve a Kohn-Sham total energy minimization with orthogonality 
constraints. There are plenty of \revise{existing} algorithms based on manifold optimization, which work
quite well when the number of columns of the matrix variable $p$ is relatively few.
With the increasing of $p$, a bottleneck of existent algorithms emerges, that is, lack of 
concurrency. The main reason leads to this bottleneck is that the orthonormalization process
has low parallel scalability. 

To solve this issue, we need to employ infeasible approaches. However, \revise{previous}
infeasible approaches including augmented Lagrangian method (ALM) is far less efficient 
than %\remove{ the existent} 
retraction based feasible methods. Even though the parallelization
reduces the running time of ALM more significantly than that of manifold methods, ALM
is still less efficient than manifold methods in parallel computing.
The main purpose of this paper 
is to provide practical 
efficient infeasible algorithms for optimization
problems with orthogonality constraints. Our main motivation is that the Lagrangian
multipliers have closed-form expression at any stationary points.
Hence, we use such expression to update multipliers instead \revise{of} dual ascent step, at the same time, the subproblem for the prime variables 
only takes one gradient step instead of being solved to a given tolerance.
The resultant algorithm, called PLAM, does not involve any orthonormalization.
PLAM is comparable with the existent feasible algorithms under well chosen penalty parameter $\beta$.
To avoid such restriction, we propose a modified version, PCAL, of PLAM.
The motivation of PCAL is to use normalized gradient step instead of gradient step
in updating prime variables. The numerical experiments show that PCAL works efficient,
robust and insensitive 
with penalty parameter $\beta$. Remarkably, it outperforms the existent feasible algorithms in
solving the KSDFT problems in
MATLAB platform  KSSOLV. We also run PCAL and MOptQR, an excellent representative of
retraction based optimization approach, in parallel with up to $96$ cores. 
Numerical experiments illustrate PCAL has higher scalability than MOptQR, and its
superiority becomes more and more noticeable with the increasing of $p$.

The potential of PCAL has already emerged. In the future work, we will apply our PCAL
to real KSDFT calculation.

\section*{Acknowledgements}  The authors would like to thank Michael Overton,
Tao Cui and Xingyu Gao for the insightful discussions.

%\clearpage
% -------------------------
% make bib in alphabet order with SIAM template
%\nocite{*}
%\bibliographystyle{siamplain}
%\bibliography{bibfile}

\begin{thebibliography}{10}
	
	\bibitem{Abisil2008}
	{\sc P.-A. Absil, R.~Mahony, and R.~Sepulchre}, {\em Optimization algorithms on
		matrix manifolds}, Princeton University Press, 2009.
	
	\bibitem{BB}
	{\sc J.~Barzilai and J.~M. Borwein}, {\em Two-point step size gradient
		methods}, IMA journal of numerical analysis, 8 (1988), pp.~141--148.
	
	\bibitem{Bertsekas14}
	{\sc D.~P. Bertsekas}, {\em Constrained optimization and {Lagrange} multiplier
		methods}, Academic press, 2014.
	
	\bibitem{Marc2014}
	{\sc J.~Bolte, S.~Sabach, and M.~Teboulle}, {\em Proximal alternating
		linearized minimization or nonconvex and nonsmooth problems}, Mathematical
	Programming, 146 (2014), pp.~459--494.
	
	\bibitem{Boumal2016}
	{\sc N.~Boumal, P.-A. Absil, and C.~Cartis}, {\em Global rates of convergence
		for nonconvex optimization on manifolds}, IMA Journal of Numerical Analysis,
	(2016).
	
	\bibitem{Boyd11}
	{\sc S.~Boyd, N.~Parikh, E.~Chu, B.~Peleato, J.~Eckstein, et~al.}, {\em
		Distributed optimization and statistical learning via the alternating
		direction method of multipliers}, Foundations and Trends{\textregistered} in
	Machine learning, 3 (2011), pp.~1--122.
	
	\bibitem{octopus2006}
	{\sc A.~Castro, H.~Appel, M.~Oliveira, C.~A. Rozzi, X.~Andrade, F.~Lorenzen,
		M.~A. Marques, E.~Gross, and A.~Rubio}, {\em {octopus}: a tool for the
		application of time-dependent density functional theory}, physica status
	solidi (b), 243 (2006), pp.~2465--2488.
	
	\bibitem{Dai_Zhou17}
	{\sc X.~Dai, Z.~Liu, L.~Zhang, and A.~Zhou}, {\em A conjugate gradient method
		for electronic structure calculations}, SIAM Journal on Scientific Computing,
	39 (2017), pp.~A2702--A2740.
	
	\bibitem{Dai_Fletcher_2005}
	{\sc Y.-H. Dai and R.~Fletcher}, {\em Projected {Barzilai-Borwein} methods for
		large-scale box-constrained quadratic programming}, Numerische Mathematik,
	100 (2005), pp.~21--47.
	
	\bibitem{Edelman98}
	{\sc A.~Edelman, T.~A. Arias, and S.~T. Smith}, {\em The geometry of algorithms
		with orthogonality constraints}, SIAM journal on Matrix Analysis and
	Applications, 20 (1998), pp.~303--353.
	
	\bibitem{Gao2016}
	{\sc B.~Gao, X.~Liu, X.~Chen, and Y.-x. Yuan}, {\em A new first-order
		algorithmic framework for optimization problems with orthogonality
		constraints}, SIAM Journal on Optimization, 28 (2018), pp.~302--332.
	
	\bibitem{GaoYang2009}
	{\sc W.~Gao, C.~Yang, and J.~C. Meza}, {\em Solving a class of nonlinear
		eigenvalue problems by {Newton's} method}, Technical Reports,  (2009).
	
	\bibitem{Hartree2014}
	{\sc P.~Garc{\'\i}a-Risue{\~n}o, J.~Alberdi-Rodriguez, M.~J. Oliveira,
		X.~Andrade, M.~Pippig, J.~Muguerza, A.~Arruabarrena, and A.~Rubio}, {\em A
		survey of the parallel performance and accuracy of {Poisson} solvers for
		electronic structure calculations}, Journal of computational chemistry, 35
	(2014), pp.~427--444.
	
	\bibitem{HuWen18}
	{\sc J.~Hu, A.~Milzarek, Z.~Wen, and Y.~Yuan}, {\em Adaptive quadratically
		regularized {Newton} method for {Riemannian} optimization}, SIAM Journal on
	Matrix Analysis and Applications, 39 (2018), pp.~1181--1207.
	
	\bibitem{JiangDai15}
	{\sc B.~Jiang and Y.-H. Dai}, {\em A framework of constraint preserving update
		schemes for optimization on {Stiefel} manifold}, Mathematical Programming,
	153 (2015), pp.~535--575.
	
	\bibitem{kohn1965self}
	{\sc W.~Kohn and L.~J. Sham}, {\em Self-consistent equations including exchange
		and correlation effects}, Physical review, 140 (1965), p.~A1133.
	
	\bibitem{LaiOsher14}
	{\sc R.~Lai and S.~Osher}, {\em A splitting method for orthogonality
		constrained problems}, Journal of Scientific Computing, 58 (2014),
	pp.~431--449.
	
	\bibitem{Lee2016}
	{\sc J.~D. Lee, M.~Simchowitz, M.~I. Jordan, and B.~Recht}, {\em Gradient
		descent only converges to minimizers}, in Conference on Learning Theory,
	2016, pp.~1246--1257.
	
	\bibitem{LiWen17}
	{\sc Y.~Li, Z.~Wen, C.~Yang, and Y.~Yuan}, {\em A semi-smooth {Newton} method
		for solving semidefinite programs in electronic structure calculations},
	arXiv:1708.08048,  (2017).
	
	\bibitem{Wright15}
	{\sc J.~Liu, S.~J. Wright, C.~R{\'e}, V.~Bittorf, and S.~Sridhar}, {\em An
		asynchronous parallel stochastic coordinate descent algorithm}, The Journal
	of Machine Learning Research, 16 (2015), pp.~285--322.
	
	\bibitem{Liu2014}
	{\sc X.~Liu, X.~Wang, Z.~Wen, and Y.~Yuan}, {\em On the convergence of the
		self-consistent field iteration in {Kohn--Sham} density functional theory},
	SIAM Journal on Matrix Analysis and Applications, 35 (2014), pp.~546--558.
	
	\bibitem{Liu2015}
	{\sc X.~Liu, Z.~Wen, X.~Wang, M.~Ulbrich, and Y.~Yuan}, {\em On the analysis of
		the discretized {Kohn--Sham} density functional theory}, SIAM Journal on
	Numerical Analysis, 53 (2015), pp.~1758--1785.
	
	\bibitem{SLRP15}
	{\sc X.~Liu, Z.~Wen, and Y.~Zhang}, {\em An efficient {Gauss--Newton} algorithm
		for symmetric low-rank product matrix approximations}, SIAM Journal on
	Optimization, 25 (2015), pp.~1571--1608.
	
	\bibitem{octopus2003}
	{\sc M.~A. Marques, A.~Castro, G.~F. Bertsch, and A.~Rubio}, {\em {octopus}: a
		first-principles tool for excited electron--ion dynamics}, Computer Physics
	Communications, 151 (2003), pp.~60--78.
	
	\bibitem{Nishimori05}
	{\sc Y.~Nishimori and S.~Akaho}, {\em Learning algorithms utilizing
		quasi-geodesic flows on the {Stiefel} manifold}, Neurocomputing, 67 (2005),
	pp.~106--135.
	
	\bibitem{Jorge06}
	{\sc J.~Nocedal and S.~J. Wright}, {\em Numerical optimization, 2nd}, Springer,
	2006.
	
	\bibitem{PengYin16}
	{\sc Z.~Peng, Y.~Xu, M.~Yan, and W.~Yin}, {\em {Arock}: an algorithmic
		framework for asynchronous parallel coordinate updates}, SIAM Journal on
	Scientific Computing, 38 (2016), pp.~A2851--A2879.
	
	\bibitem{PengYin13}
	{\sc Z.~Peng, M.~Yan, and W.~Yin}, {\em Parallel and distributed sparse
		optimization}, in Signals, Systems and Computers, 2013 Asilomar Conference
	on, IEEE, 2013, pp.~659--646.
	
	\bibitem{powell1969method}
	{\sc M.~J. Powell}, {\em A method for nonlinear constraints in minimization
		problems}, Optimization,  1969, pp.~283--298.
	
	\bibitem{Hogwild11}
	{\sc B.~Recht, C.~Re, S.~Wright, and F.~Niu}, {\em {Hogwild}: A lock-free
		approach to parallelizing stochastic gradient descent}, in Advances in neural
	information processing systems, 2011, pp.~693--701.
	
	\bibitem{Ulbrich_Wen2015}
	{\sc M.~Ulbrich, Z.~Wen, C.~Yang, D.~Klockner, and Z.~Lu}, {\em A proximal
		gradient method for ensemble density functional theory}, SIAM Journal on
	Scientific Computing, 37 (2015), pp.~A1975--A2002.
	
	\bibitem{WenUlbrich13}
	{\sc Z.~Wen, A.~Milzarek, M.~Ulbrich, and H.~Zhang}, {\em Adaptive regularized
		self-consistent field iteration with exact {Hessian} for electronic structure
		calculation}, SIAM Journal on Scientific Computing, 35 (2013),
	pp.~A1299--A1324.
	
	\bibitem{EigPen16}
	{\sc Z.~Wen, C.~Yang, X.~Liu, and Y.~Zhang}, {\em Trace-penalty minimization
		for large-scale eigenspace computation}, Journal of Scientific Computing, 66
	(2016), pp.~1175--1203.
	
	\bibitem{WenYin2013}
	{\sc Z.~Wen and W.~Yin}, {\em A feasible method for optimization with
		orthogonality constraints}, Mathematical Programming, 142 (2013),
	pp.~397--434.
	
	\bibitem{YangGao2009}
	{\sc C.~Yang, W.~Gao, and J.~C. Meza}, {\em On the convergence of the
		self-consistent field iteration for a class of nonlinear eigenvalue
		problems}, SIAM Journal on Matrix Analysis and Applications, 30 (2009),
	pp.~1773--1788.
	
	\bibitem{Yang09}
	{\sc C.~Yang, J.~C. Meza, B.~Lee, and L.-W. Wang}, {\em Kssolv--a {MATLAB}
		toolbox for solving the {Kohn-Sham} equations}, ACM Transactions on
	Mathematical Software (TOMS), 36 (2009), p.~10.
	
	\bibitem{Yang06}
	{\sc C.~Yang, J.~C. Meza, and L.-W. Wang}, {\em A constrained optimization
		algorithm for total energy minimization in electronic structure
		calculations}, Journal of Computational Physics, 217 (2006), pp.~709--721.
	
	\bibitem{Yang2007}
	{\sc C.~Yang, J.~C. Meza, and L.-W. Wang}, {\em A trust region direct
		constrained minimization algorithm for the {Kohn--Sham} equation}, SIAM
	Journal on Scientific Computing, 29 (2007), pp.~1854--1875.
	
	\bibitem{Wen_Zhou14}
	{\sc X.~Zhang, J.~Zhu, Z.~Wen, and A.~Zhou}, {\em Gradient type optimization
		methods for electronic structure calculations}, SIAM Journal on Scientific
	Computing, 36 (2014), pp.~C265--C289.
	
\end{thebibliography}
% -------------------------

\end{document}